\newcommand{\DD}{\mathbb{D}}
\newcommand{\EE}{\mathbb{E}}
\newcommand{\JJ}{\mathbb{J}}
\newcommand{\NN}{\mathbb{N}}
\newcommand{\PP}{\mathbb{P}}
\newcommand{\QQ}{\mathbb{Q}}
\newcommand{\RR}{\mathbb{R}}
\newcommand{\aA}{\mathcal{A}}
\newcommand{\bB}{\mathcal{B}}
\newcommand{\cC}{\mathcal{C}}
\newcommand{\eE}{\mathcal{E}}
\newcommand{\fF}{\mathcal{F}}
\newcommand{\gG}{\mathcal{G}}
\newcommand{\nN}{\mathcal{N}}
\newcommand{\oO}{\mathcal{O}}
\newcommand{\pP}{\mathcal{P}}
\newcommand{\rR}{\mathcal{R}}
\newcommand{\uU}{\mathcal{U}}
\newcommand{\xX}{\mathcal{X}}
\newcommand{\fA}{\mathfrak{A}}
\newcommand{\fM}{\mathfrak{M}}
\newcommand{\al}{\alpha}
\newcommand{\vt}{\vartheta}
\newcommand{\e}{\varepsilon}
\newcommand{\la}{\lambda}
\newcommand{\no}{\noindent}
\newcommand{\pd}{\partial}
\newcommand{\ra}{\rightarrow}
\newcommand{\lra}{\longrightarrow}
\newcommand{\ti}{\tilde}
\newcommand{\lgl}{\ensuremath{\langle}}
\newcommand{\rgl}{\ensuremath{\rangle}}
\newcommand{\non}{\nonumber}
\newcommand{\ind}{\mathbf{1}}
\newcommand{\lqq}{\leqslant}
\newcommand{\gqq}{\geqslant}
\newtheorem{thm}{Theorem}
\newtheorem{lem}[thm]{Lemma}
\theoremstyle{plain} 
\newtheorem{theorem}{Theorem}
\newtheorem{corollary}[thm]{Corollary}
\newtheorem{lemma}[thm]{Lemma}
\newtheorem{proposition}[thm]{Proposition} 
\theoremstyle{definition}
\newtheorem{claim}{Claim}
\newtheorem{condition}{Hypothesis}
\theoremstyle{remark} 
\theoremstyle{definition}
\newtheorem{rem}[thm]{Remark}
\DeclareMathSymbol{\ophi}{\mathalpha}{letters}{"1E}
\renewcommand{\phi}{\varphi}
\newcommand{\be}{\begin{equation}}
\newcommand{\ee}{\end{equation}}
\newcommand{\ben}{\begin{equation*}}
\newcommand{\een}{\end{equation*}}
\newcommand{\ba}{\begin{equation}\begin{aligned}}
\newcommand{\ea}{\end{aligned}\end{equation}}
\DeclareMathOperator{\diam}{diam}
\newfont{\cyrfnt}{wncyr10}
\def\J3{\cyrfnt{\rm \u{\cyrfnt I}}}
\def\j3{\cyrfnt{\rm \u{\cyrfnt i}}}
\definecolor{DarkGreen}{rgb}{0.1,0.7,0.3}   
\definecolor{DarkGreen}{rgb}{0.1,0.7,0.3}   
\begin{document}

\title{
The Kramers problem for SDEs driven by 
small, accelerated L\'evy noise with 
exponentially light jumps 
}

\date{\null}

\author{
Andr\'e de Oliveira Gomes \footnote{Departamento de Matem\'{a}tica Universidade Estadual de Campinas 13081-970 Campinas SP-Brazil; Institut f\"{u}r Mathematik Universit\"{a}t Potsdam; andrego@unicamp.br} \hspace{1cm}
Michael A. H\"ogele \footnote{Departamento de matem\'aticas, Universidad de los Andes, Bogot\'a, Colombia; ma.hoegele@uniandes.edu.co
}
}

\maketitle

 \begin{abstract} 
 We establish Freidlin-Wentzell results for 
 a nonlinear ordinary differential equation 
 starting close to the stable state $0$, say, subject to a perturbation 
 by a stochastic integral which is driven by an $\e$-small and $(1/\e)$-accelerated 
 L\'evy process with exponentially light jumps. 
 For this purpose we derive a large deviations principle for the stochastically perturbed system using the weak convergence approach developed by Budhiraja, Dupuis, Maroulas and collaborators in recent years.
 In the sequel we solve the associated asymptotic first escape problem from the bounded neighborhood of $0$ 
 in the limit as $\varepsilon \ra 0$ which is also known as the Kramers problem in the literature. 
 \end{abstract}

\noindent \textbf{Keywords: } 
Freidlin-Wentzell theory; Large deviations principle; 
accelerated small noise L\'evy diffusions; 
first passage times; 
first exit location; 
strongly tempered stable L\'evy measure.

\medskip

\noindent \textbf{2010 Mathematical Subject Classification: } 60H10; 60F10; 60J75; 60J05; 60E07.


\section{Introduction and main results}
\subsection{Introduction}

In this article we solve the Kramers problem 
for the family of strong solutions $(X^{\e})_{\e>0}$ of the following stochastic differential equation (SDE for short) 
\begin{align}\label{eq:introsys}
 X^{\e}_t = x + \int_0^t b(X^{\e}_s) ds + \varepsilon \int_0^t \int_{\RR^d\backslash\{0\}} G(X^{\e}_{s-})z  \tilde N^{\frac{1}{\e}}(ds,dz), 
\quad t \gqq 0,
\end{align}
on a neighborhood  $D$ of $0$, which is the stable state of the underlying deterministic 
dynamical system ($\varepsilon=0$). The driver of the stochastic perturbation  
 $\tilde N^{\frac{1}{\e}}$ is a compensated Poisson random measure with 
compensator $ds \otimes \frac{1}{\e}\nu$, where $\nu$ is a L\'evy measure satisfying a 
certain exponential integrability condition and the multiplicative coefficient $G$ being a 
non-vanishing scalar Lipschitz function. Our main result determines the asymptotic behavior 
($\varepsilon \ra 0$) of the law and the expectation of the first exit time and location,
\begin{align*}
\sigma^\varepsilon(x):= \inf \{ t \gqq 0 ~|~X^{\varepsilon,x}_t \notin D \} 
\quad \text{and } \quad X^\varepsilon_{\sigma^\varepsilon(x)}, \mbox{ respectively. }
\end{align*} 
Our analysis relies on the establishment of 
a large deviations principle (LDP for short) based on the weak convergence approach, 
developed by Budhiraja, Dupuis, Maroulas and collaborators \cite{BDM11, BCD13}.

\noindent Historically, the Kramers problem, that is, 
the escape time and location of a 
randomly excited deterministic dynamical system 
from close to a stable state at small intensity 
arose in the context of chemical reaction kinetics 
\cite{Arrhenius-89}, \cite{Eyring-35} and \cite{Kramers-40}. 
Nowadays this classical problem is virtually ubiquitous  
and provides crucial insights in many 
diverse areas ranging from statistical mechanics, statistics, 
insurance mathematics, population dynamics and fluid dynamics to neurology. 
The mathematical theory of large deviations goes back 
to the seminal work by Cr\'{a}mer \cite{Cramer-38} before taking off in the seventies of the last century
with the fundamental works by \cite{DV75, FW70, FW98, Wentzell-76}. 
One main focus was the first exit problem for 
ordinary, delay and partial differential equations with small Gaussian noise 
in different settings and effects derived from it such as metastability and stochastic resonance.
Classical texts with detailed expositions of the history of large deviations theory 
include \cite{Berglund-13, 
BerglundG-04, BerGen-10, BarBovMel10, BovEckGayKle02, Bovier1, CerRoeck-04, DeuStr89,DZ98, Dupuis Ellis, Freidlin00, Siegert, Varadhan} 
among others as well as the references therein.
Furthermore, there is a lot of active research in the field, see for instance \cite{19-3, 19-2, 19-4, 19-5, SS19, 19-1}. 
The major part of the body of literature studying large deviations and 
the Kramers law for stochastic differential equations 
with small noise is devoted to the study of Gaussian dynamics. 
For the dynamics of Markovian systems with jumps the literature is 
noticeably more fragmented, scattered and recent. 
It is due to the considerable variety of L\'evy processes, including 
processes with heavy tails, and the
resulting lack of moments, that there is no general large deviations 
theory for L\'evy processes and diffusions with jumps.
Large deviations results  for certain classes of L\'{e}vy 
noises and Poisson random measures 
are given in \cite{Acosta, Blanchet et al17, Borovkov, Florens Pham, Godovanchuk-82, Leonard, Lynch87, Puhalskii} and \cite{Wentzell-90}. 
The first exit problem for small jump L\'evy processes starts with the seminal paper by \cite{Godovanchuk-79} 
for $\alpha$-stable processes and for more generally heavy-tailed processes by \cite{Debussche et al., HoePav-13, Imk Pav06, Imk Pav08, Pav11}. The mentioned works do not follow 
a large deviations regime in the sense of \cite{BDM11, BCD13} since the intensity measure 
of the underlying L\'{e}vy process is not rescaled by $\frac{1}{\varepsilon}$ as in our work. 
 We mention \cite{Imk Pav Wetzel09, ImkPavWet10}, where the authors provide 
in one dimension a complete scale study of asymptotic exit times as functions of $1/\e$ ranging 
from polynomial via subexponential to exponential. 

\noindent This article follows the rather different line of research started in \cite{BDM11}
including not only an $\e$-dependent amplitude but also an $\e$-dependent 
acceleration of the jump intensity of the noise. It is this tuning between the jump 
size of order $\varepsilon$ and the intensity of $ N^{\frac{1}{\varepsilon}}$ that permits 
to retrieve the large deviations regime for the dynamical system perturbed by a stochastic integral with respect to $\varepsilon \tilde N^{\frac{1}{\varepsilon}}$. 
In  \cite{BDM11, BCD13} Budhiraja, Dupuis, Maroulas  and collaborators derive large deviations 
results using  new variational representation formulas for functionals of continuous time processes of this type.
This has sparked  a lot of ongoing research, cf. \cite{BN15, Yang et al15, Zhai Zhang}. 
In \cite{BDM11} and in the recent monograph \cite{Budhiraja} the reader will find an extensive 
and up-to-date introduction 
to this subject. 

\noindent The concise comprehension of the Kramers problem in this setting, which to our knowledge 
is missing in the literature to date, opens the door to deeper questions such as metastability, 
stochastic resonance and averaging, among other topics. 
The LDP of $(X^{\varepsilon,x})_{\varepsilon>0}$ is given as an optimization 
problem under the dynamics solved by continuous 
controlled paths with a nonlocal component due to the pure jump noise. 

\noindent For the derivation of the LDP we verify the sufficient abstract criteria established in \cite{BDM11}. In  their follow-up article \cite{BCD13} the authors apply this criteria to prove a large deviations result for a stochastic differential equation driven by small L\'{e}vy noise under the stricter assumption that the jumps of the noise component are bounded. There the authors use the same abstract sufficient criteria to establish a LDP for a stochastic partial differential equation driven by pure jump processes where the condition on the L\'{e}vy measure is relaxed (Condition 3.1 in \cite{BDM11}). 
In contrast to \cite{BCD13} our setting is the jump diffusion given by (1)
with values in the finite dimensional Euclidean space $\RR^d$.  
Our assumptions on the coefficients of (1) are rather standard monotonicity, Lipschitz and boundedness assumptions given in full detail in Subsection 1.2 and 1.4. 
They yield the LDP (first main theorem of this article) for a  L\'{e}vy measure with infinite intensity allowing for unbounded jumps subject to an exponential integrability condition, 
such as in \cite{BCD13}. However, the method to show the validity of the abstract 
sufficient criteria given in \cite{BDM11, BCD13} is different. 
The authors there base the construction 
of their weak convergence arguments only on a-priori bounds for the second moments of the jump infinite-dimensional jump diffusions. 
Naturally, the establishment of such a-priori bounds 
is difficult to obtain in the case of locally 
Lipschitz coefficients. The method we use to derive 
the large deviations result relies on the estimation 
of probabilities and on localization techniques based on 
a Bernstein-type inequality given in \cite{DZ01}. 
The use of localization techniques naturally allows 
the extension of the large deviations result obtained in 
this article to the case of locally Lipschitz coefficients, 
such as for instance the gradient case of a polynomial potential, 
which is clearly beyond the scope of this article. 


 \no Analogously to the classical Freidlin-Wentzell theory we solve the Kramers problem with a 
pseudo-potential given in terms of the good rate function of the LDP. In the Brownian 
case, under very mild assumptions on the coefficients of the SDE the respective controlled dynamics exhibits continuity properties that are crucial in the characterization of the first exit times. This differs 
strongly from the pure jump case which is the focus of this work. In this context, obtaining a closed 
form for the rate function is a hard task since the class of minimizers are scalar functions that 
represent shifts of the compensator of $\varepsilon \tilde N^{\frac{1}{\varepsilon}}$ on the 
nonlocal (possibly singular) component of the underlying controlled dynamics. This is an additional  
difficulty in the characterization of the first exit time in terms of the pseudo-potential. However, 
in the case of finite absolutely continuous jump measures we can solve the first exit time problem with the help of explicit formulas that we obtain for the controls. 
In other words, on an abstract level the physical intuition remains intact; 
however, since the control is given as a density w.r.t. the L\'evy measure $\nu$,
it is often hard to calculate the energy minimizing paths. 
This is the object of discussion in Section \ref{sec: extensions remarks} where we illustrate our results with several examples. The first class for which we can solve everything explicitly in terms of the coefficients of (\ref{eq:introsys}) is the finite intensity benchmark case $\nu(dz)= e^{- |z|^\beta} dz$, $\beta \gqq 2$. As a second example we introduce the natural class of Gauss-tempered $\alpha$-stable L\'{e}vy measures in the spirit of strongly tempered $\alpha$-stable L\'{e}vy measures studied in \cite{Ros07}. For this class of measures we solve the Kramers problem subject to an additional continuity property for the controlled path dynamics (cf. Hypothesis \ref{condition: generalized statement-potential} in Subsection \ref{sec: extensions remarks}). L\'{e}vy measures with compact support are another important class of measures that are covered in this setting.

\no Analogously to the Brownian case \cite{DZ98, FW98} we construct for the lower bound of the first exit time a (modified) Markov chain approximation that takes into account the topological particularities of 
the Skorokhod space on which we have the LDP. In addition, the effect of the $(1/\e)$-acceleration of the jump intensity 
enters as follows. The asymptotically exponentially negligible error estimates concerning the stickyness of the diffusion to its initial value, which in the classical Brownian case are valid for time intervals of order $1$, 
in our case only hold for time intervals of order $\e$.  

\no The article is organized as follows. We start with the exposition of the generic setting followed by
the discussion of the specific hypothesis for the LDP and the 
Kramers problem for finite intensity. It is followed by the previously mentioned Subsection \ref{sec: extensions remarks} where we extend the results to infinite intensity, discuss the additional hypotheses and present natural classes of examples including the new class of Gauss-tempered $\alpha$-stable processes. In Section 2 we establish  the 
LDP of $(X^{\varepsilon,x})_{\varepsilon>0}$ given by (\ref{eq:introsys}) on a finite time interval.
Section 3 deals with the upper and the  lower bound of the Kramers problem. The appendix essentially contains auxiliary technical results for the derivation of LDP.

\medskip

\subsection{The setting:} \label{subsec: object of study}
\paragraph{The deterministic dynamics:}

Consider the following $\cC^1$ vector field 
$b: \RR^d \ra \RR^d$, $x \in \RR^d$ and  
the deterministic dynamical system given as the solution flow $t \mapsto X^{0, x}_t$ 
of the ordinary differential equation
\begin{align} \label{eq: determinsitic ode}
\displaystyle \frac{d}{dt} X^{0,x}_t &= b(X^{0,x}_t),  \qquad  t\gqq 0, \qquad \mbox{ and }\qquad X^{0,x}_0 = x,
\end{align}
subject to the following assumptions. 

\begin{condition} \label{condition: det dynamical system} The vector field $b$ satisfies the following. 
\begin{itemize} 
\item[\textbf{A.1:}] There is a constant $c_1>0$ such that 
\begin{align} \label{eq: coercivity vector field det}
\langle b(y_1)-b(y_2), y_1-y_2 \rangle \lqq - c_1 |y_1-y_2|^2, \quad \mbox{ for all }y_1, y_2\in \RR^d.
\end{align}
\item[\textbf{A.2:}] The point $0\in\RR^d$ is critical in that $b(0)=0$. 
\end{itemize}
\end{condition}

\begin{rem}
\begin{enumerate}
 \item It is well-known that under Hypothesis \ref{condition: det dynamical system}, for 
every initial point $x \in \RR^d$ there is a unique solution $t\mapsto X^{0,x}_t$ of \eqref{eq: determinsitic ode} 
for all $t \gqq 0$. 
 \item Hypothesis \ref{condition: det dynamical system}.1
implies that $Db(x)$ is strictly negative definite for all $x\in D$. In the case of a gradient system 
$b = - \nabla \uU$ for some potential $\uU: \RR^d \ra [0, \infty)$, this is equivalent to uniform convexity.  

As a consequence, $0$ is a hyperbolic stable fixed point of the dynamical system (2) in the sense that 
there is a constant $a > 0$ such that 
all the eigenvalues $\la$ of $D^2 b(0)$ have negative real part with $\Re(\lambda) < −a < 0$. Hence due to [51]-Theorem 5.1 it follows the limit $e^{at} x(t) = 0.$
\end{enumerate}
\end{rem}

In the sequel we define  
the stochastic perturbation $\e N^{\frac{1}{\varepsilon}}$ of \eqref{eq: determinsitic ode} formally. See also \cite{BDM11} and \cite{BCD13}.  

\paragraph{The underlying noise  $\varepsilon N^{\frac{1}{\varepsilon}}$.}

Let $\mathfrak{M}$ be the space of all locally finite measures defined on the Borel 
$\sigma$-algebra $\bB(\RR^d\backslash\{0\})$. 

\no We fix a non-atomic measure $\nu \in \fM$; that is, 
$\nu(\{z\})=0$ for all $z \in \RR^d$ 
and $\nu(K)< \infty$ for every compact set $K \subset \RR^d$ with $0 \notin K$. 
Theorem I.9.1 in \cite{Ikeda Watanabe} then shows that 
the measurable space $(\mathfrak{M}, \bB(\mathfrak{M}))$ can be equipped 
with a unique non-atomic probability measure $\PP$ such that the canonical map 
$N: \mathfrak{M} \ra \mathfrak{M}$, $N(m):=m$ defines a Poisson random 
measure with intensity measure $ds \otimes \nu$ on $[0, \infty) \times \RR^d\backslash \{0\}$, 
where $ds$ denotes the Lebesgue measure on the interval $[0, \infty)$. 
We also refer the reader to Proposition 19.4 in \cite{Sato}. 
The compensated Poisson random measure of $N$ is defined by 
$\tilde N([0,s] \times A):= N([0,s] \times A) - s \nu(A)$ for all $s \gqq 0$ 
and $A \in \bB(\RR^d\backslash\{0\})$ 
such that $\nu(A)< \infty$. The expectation under $\PP$ is denoted by $\EE$. 
For all $\e>0$ we denote by $N^{\frac{1}{\e}}$ the Poisson random measure defined on the probability 
space $(\mathfrak{M}, \bB(\mathfrak{M}), \PP)$ with intensity measure $ds \otimes  \frac{1}{\e}  \nu$ 
and its compensated counterpart $\tilde N^{\frac{1}{\e}}$. 
In particular, we have $N=N^1$ and $\tilde N= \tilde N^1$. 

Consider the space $[0, \infty) \times \RR^d\backslash\{0\} \times [0, \infty)$ and 
denote by $\bar{\mathfrak{M}}$ the space of the locally finite measures 
defined on the Borel $\sigma$-algebra $\bB([0, \infty) \times\RR^d \backslash\{0\}\times [0, \infty))$. 
Analogously there is a unique probability measure $\bar \PP$ defined 
on $(\bar{\mathfrak{M}}, \bB(\bar{\mathfrak{M}}))$ 
such that the canonical map $\bar N: \bar{\mathfrak{M}} \ra \bar{\mathfrak{M}}$, 
$\bar N(\bar m):= \bar m$, is a Poisson random measure on the probability space 
$(\bar{\mathfrak{M}}, \bB(\bar{\mathfrak{M}}), \bar \PP)$ with intensity measure 
$ds \otimes \nu \otimes dr$, where $dr$ denotes the Lebesgue measure on the interval $[0, \infty)$. 
We write $\bar \EE$ for the $\bar \PP$ expectation. 

\begin{rem}
For $(t,z,r) \in [0, \infty) \times \RR^d\backslash\{0\} \times [0, \infty)$, $t$ represents 
the time variable, $z$ the spatial jump increments $z$ of the underlying L\'{e}vy process 
associated to the Poisson random measure $\bar N$
and $r$ the frequency of the jump $z$ at time $t$. 
\end{rem}

\no For any $\e>0$ the Poisson random measure $N^{\frac{1}{\e}}$ 
has the following representation as a controlled random measure with respect to $\bar N$ under $\bar \PP$. 
We have $\bar \PP$-almost surely for every $t \gqq 0$ and $A \in \bB(\RR^d\backslash\{0\})$ the identity  
\begin{align} \label{eq: PRM as controlled random measure}
N^{\frac{1}{\e}}([0,t] \times A) = \int_0^t \int_{A} \int_0^\infty \textbf{1}_{[0, \frac{1}{\e}]}(r) \bar N(ds,dz,dr).
\end{align}
For details we refer the reader to \cite{BDM11}. 

\begin{condition} \label{condition: the measure nu} 
The measure $\nu \in \mathfrak{M}$ is non-atomic and satisfies the following conditions.
\begin{itemize}
\item[\textbf{B.1:}] $\nu$ is a finite measure, $\nu(\RR^d \backslash \{0\}) < \infty$.
\item[\textbf{B.2:}] There exists $\Gamma > 0$ such that 
\begin{align} \label{eq: integrability cond measure}
\int_{B_1^c(0)} e^{\Gamma |z|^2} \nu(dz) < \infty.
\end{align}
\item[\textbf{B.3:}] The measure $\nu$ is absolutely continuous with respect to the Lebesgue measure $dz$ 
on the measurable space $(\RR^d \backslash \{0\}, \bB(\RR^d \backslash \{0\}))$ and $\frac{d \nu}{dz}(z) \neq 0$ for 
every $z \in \RR^d \backslash \{0\}$.
\end{itemize}
\end{condition}
For a discussion of Hypothesis \ref{condition: the measure nu} we refer the reader to the 
remarks after the more general Hypothesis \ref{condition:generalized statement- the measure nu} 
in Subsection \ref{sec: extensions remarks}.
\begin{rem}  From Hypothesis \ref{condition: the measure nu} it follows for any $\varepsilon>0$ 
that the jumps of the stochastic process 
$L^\varepsilon_t = \int_0^t \int_{\RR^d \backslash \{0\}} z \tilde N^{\frac{1}{\varepsilon}}(ds,dz)$ have finite intensity.
\end{rem}

\paragraph{The multiplicative coefficient.}

\no The function
$G:\RR^d \longrightarrow \RR \backslash \{0\}$ satisfies the following. 
\begin{condition} \label{condition: on the multiplicative coefficient}
 There exists $L>0$ such that for all 
  $x,y \in \RR^d$  we have 
  \begin{align*}
|G(x)- G(y)|&\lqq L |x-y| \qquad \mbox{ and } \qquad |G(x)| \lqq L(1 + |x|).
\end{align*}
\end{condition}

\paragraph{The stochastic differential equation.}  
Under Hypotheses~\ref{condition: det dynamical system}, \ref{condition: the measure nu} and \ref{condition: on the multiplicative coefficient} we consider for every $\e>0$ and $x \in \RR^d$ the following SDE 
\begin{align} \label{eq: the sde full perturbation}
X^{\e,x}_t = x + \int_0^t b(X^{\e,x}_s) ds + \varepsilon \int_0^t \int_{\RR^d \backslash \{ 0\}} G(X^{\e,x}_{s-})z  \tilde N^{\frac{1}{\e}}(ds,dz), 
\quad t \gqq 0,
\end{align}
. 

\no We denote by $(\fF_t)_{t\gqq 0}$ the filtration given  for any $t \gqq 0$ by
\begin{align*}
   \fF_t = \sigma \big( \big \{  N^1([0,s] \times A \times C  ) ~|~ 0\lqq s \lqq t, A \in \bB(\RR^d\backslash\{0\}), 
 C \in \bB([0, \infty)) \big\} \vee \nN \big),\qquad t\gqq 0, 
\end{align*}
where $\nN$ is the collection of the $\bar \PP$-null sets in $\bB([0, \infty) \times \RR^d\backslash\{ 0\} \times [0, \infty))$. 

Let $\DD([0,T], \RR^d)$ be 
the linear space of c\`{a}dl\`{a}g functions over the interval $[0, T]$, $T>0$, with values in $\RR^d$. 
It is well-known in the literature that the space $\DD([0,T], \RR^d)$ equipped with the topology generated by the $J_1$-metric $d_{J_1}$, 
known as the Skorokhod space, is a Polish space (see for instance Theorem 12.1 and Theorem 12.2 in \cite{Billingsley}).  


For the following result we cite Theorem IV-9.1  in Ikeda Watanabe and Theorem 6.4.5 of \cite{Applebaum-09}.
\begin{theorem} \label{thm: existence sol sde} 
Given $\e, T>0$, $x \in \RR^d$ and $\nu \in \fM$ 
let Hypotheses \ref{condition: det dynamical system}, \ref{condition: the measure nu} and \ref{condition: on the multiplicative coefficient} be satisfied. 
Then there is a unique c\`{a}dl\`{a}g $(\fF_t)_{t \in [0,T]}$-adapted stochastic process $(X^{\varepsilon,x}_t)_{t \in [0,T]}$ satisfying \eqref{eq: the sde full perturbation} for any $t \gqq 0$ $\bar \PP$-a.s. 
In addition, $X^{\e,x} = (X^{\e,x}_t)_{t\in [0, T]}$ is a strong Markov 
process with respect to the filtration $(\fF_t)_{t\in [0, T]}$.
In particular, due to the uniqueness of the strong solution of (6) in the sense of Definition IV-1.5 in \cite{Ikeda Watanabe} there is a $\bar \PP$-a.s. well-defined measurable map $\gG^{\varepsilon,x}: \bar{\mathcal{M}} \longrightarrow \DD([0,T];\RR^d)$ such that $\varepsilon N^{\frac{1}{\varepsilon}} \mapsto X^{\varepsilon,x}$.
\end{theorem}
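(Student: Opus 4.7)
The proof is essentially a verification of the hypotheses of the two cited classical results, Theorem IV-9.1 in \cite{Ikeda Watanabe} and Theorem 6.4.5 in \cite{Applebaum-09}. The plan is first to confirm that the coefficients of \eqref{eq: the sde full perturbation} meet the standard regularity and integrability requirements. For the drift $b$, Hypothesis A delivers $\cC^1$-regularity (hence local Lipschitz continuity) together with the dissipativity estimate $\langle b(y), y \rangle \lqq -c_1 |y|^2$, obtained by specializing \eqref{eq: coercivity vector field det} with $y_2 = 0$ and invoking A.2. For the jump integrand $(x,z) \mto G(x) z$, Hypothesis C supplies the Lipschitz and linear-growth behaviour in $x$, while the combination of B.1 and B.2 immediately yields $\int_{\RR^d \setminus \{0\}} |z|^2 \nu(dz) < \infty$, placing this integrand in the natural $L^2(ds \otimes \tfrac{1}{\e}\nu)$-class of admissible predictable integrands against $\tilde N^{1/\e}$.

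The most transparent route to strong existence, uniqueness and measurability of $\gG^{\e,x}$ exploits Hypothesis B.1 directly. Since $\nu$ is a finite measure, the jump times of $N^{1/\e}$ on any compact interval $[0,T]$ form a $\bar \PP$-a.s.\ finite set $\tau_1 < \tau_2 < \cdots$ with marks $z_1, z_2, \dots \in \RR^d \setminus \{0\}$. I would construct $X^{\e,x}$ by interlacing: between consecutive jump times the trajectory solves the deterministic ODE $\dot Y = b(Y) - \e G(Y) \bar z_\e$ with $\bar z_\e := \int z \nu(dz)$, which is globally well-posed by $\cC^1$-regularity and non-explosive by the dissipative a priori bound, and at each time $\tau_k$ one sets $X^{\e,x}_{\tau_k} = X^{\e,x}_{\tau_k-} + \e G(X^{\e,x}_{\tau_k-}) z_k$. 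This pathwise procedure is manifestly $(\fF_t)$-adapted and a measurable functional of $\bar N$, thereby simultaneously producing strong existence and the well-defined map $\gG^{\e,x}: \bar{\mathcal{M}} \to \DD([0,T];\RR^d)$.

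Pathwise uniqueness would then follow from a Gronwall argument on $|X^{\e,x}_t - \widetilde X^{\e,x}_t|^2$ via the It\^o formula for semimartingales: the finite-variation part is absorbed by Hypothesis A.1, while the compensator and quadratic variation of the jump part are dominated by $C \e \cdot \int |z|^2 \nu(dz) \cdot |X^{\e,x} - \widetilde X^{\e,x}|^2$ through Hypothesis C. The strong Markov property on $(\fF_t)_{t\in [0,T]}$ is then the standard consequence of pathwise uniqueness combined with the independent-increment structure of $\bar N$, exactly as in the proof of Applebaum's Theorem 6.4.5.

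The main technical delicacy is that some formulations of Ikeda-Watanabe IV-9.1 are stated under a global Lipschitz hypothesis for $b$, whereas Hypothesis A supplies only $\cC^1$-regularity plus dissipativity. I would address this by a routine truncation-localization argument: solve \eqref{eq: the sde full perturbation} with $b$ replaced by $\chi_R b$ for a smooth radial cutoff $\chi_R \equiv 1$ on $B_R(0)$, then use the dissipative a priori bound to produce estimates uniform in $R$ and send $R \to \infty$. Given the finite-intensity interlacing construction above, this localization is rather a formality than a serious obstacle.
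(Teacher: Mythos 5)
Your proposal is sound, but it is considerably more work than what the paper actually does: the paper offers no proof of this theorem at all, it simply cites Theorem IV-9.1 of Ikeda--Watanabe and Theorem 6.4.5 of Applebaum after noting that $b$ is $\cC^1$ and dissipative, $G$ is Lipschitz with linear growth, and $\nu$ has the required second moments. Your route is genuinely different and more self-contained: the interlacing construction exploiting the finite intensity of Hypothesis B.1 (a.s.\ finitely many jumps of $N^{1/\e}$ on $[0,T]$, deterministic flow between jumps, jump update $X_{\tau_k}=X_{\tau_k-}+\e G(X_{\tau_k-})z_k$), followed by a Gronwall/It\^o uniqueness argument and the standard derivation of the strong Markov property from pathwise uniqueness and the independent increments of $\bar N$. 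What your approach buys is exactly the point you raise at the end: the cited Ikeda--Watanabe theorem is formulated for (globally) Lipschitz coefficients, whereas Hypothesis A only gives $\cC^1$ plus one-sided dissipativity, so a bare citation leaves a small gap that your interlacing-plus-localization argument closes; it also produces the measurable map $\gG^{\e,x}$ constructively rather than as an abstract consequence of strong uniqueness. What the paper's approach buys is brevity, and it is the standard practice for such well-posedness statements.

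Two small corrections to your writeup. First, the inter-jump ODE carries the wrong power of $\e$: the compensator of $\e\tilde N^{1/\e}$ contributes $-\e\cdot\frac1\e\int G(Y)z\,\nu(dz)= -G(Y)\bar z$ with $\bar z=\int z\,\nu(dz)$, so the flow between jumps solves $\dot Y=b(Y)-G(Y)\bar z$, not $\dot Y=b(Y)-\e G(Y)\bar z$; the $\e$ and the accelerated intensity $1/\e$ cancel. Second, you should state explicitly that $\int_{\RR^d\backslash\{0\}}|z|\,\nu(dz)<\infty$ (immediate from B.1 together with the exponential tail bound B.2), since this is what makes $\bar z$ and the compensator correction well defined; your $L^2$ remark alone does not cover the first-moment term. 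Neither point affects the viability of the argument.
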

  \medskip

\subsection{Statement of the main results} \label{subsection: main results}
Let the standing assumptions of Subsection \ref{subsec: object of study} be satisfied, in particular 
Hypotheses \ref{condition: det dynamical system},\ref{condition: the measure nu} and \ref{condition: on the multiplicative coefficient}.
\subsubsection{A LDP for $(X^{\varepsilon,x})_{\varepsilon>0}$}
Whenever possible without confusion we shall drop the index for the initial condition $x$. 
In this paragraph we fix some notation and introduce the necessary objects for the statement of the LDP
of $(X^{ \e, x})_{\e >0}$ following \cite{BDM11} and \cite{BCD13}. 
For fixed $T>0$ and a measurable function $g: [0,T] \times \RR^d \backslash \{0\} \ra [0, \infty)$ we define the entropy functional  by 
\begin{align} \label{eq: entropy functional}
\eE_T(g) := \int_0^T \int_{\RR^d \backslash \{ 0\}} (g(s,z) \ln g(s,z) - g(s,z)+1) \nu(dz) ds.
\end{align}
For every $M \gqq 0$ we define the sublevel sets of the functional $\eE_T$ by 
\begin{align} \label{eq: the sublevel sets}
\mathfrak{S}^M := \Big \{ g: [0,T] \times \RR^d \backslash \{0\} \lra [0, \infty) \mbox{ mb}~\big|~ \eE_T(g) \lqq M \Big \} \quad \mbox{ and set } \quad \mathfrak{S} := \bigcup_{M \gqq 0} \mathfrak{S}^M. 
\end{align}
Given $T>0$, $x \in \RR^d$ and $g \in \mathfrak{S}$ we 
consider the controlled integral equation
\begin{align} \label{eq: controlled ODE}
U^{g} (t;x) = x + \int_0^t b(U^g(s;x))ds + \int_0^t \int_{\RR^d \backslash \{ 0\}} G(U^g (s;x))(g(s,z)-1) z \nu(dz)ds, ~t \in [0,T]. 
\end{align}
It is standard in the literature (see Theorem 3.7 in \cite{BCD13})  
that the equation \eqref{eq: controlled ODE} has 
a unique solution $U^{g} \in C([0,T], \RR^d)$ and it satisfies the uniform bound 
\begin{align} \label{eq: uniform bound controlled odes}
\displaystyle \sup_{t \in [0,T]} \displaystyle \sup_{g \in \mathfrak{S}^M} |U^g (t;x)| < \infty \qquad \mbox{ for all }M>0. 
\end{align}
In particular, the map $\gG^{0, x}: \mathfrak{S} \ra C([0,T], \RR^d)$, $g \mapsto \gG^{0,x}(g):= U^{g}(\cdot \,;x)$ is well-defined  for any fixed $x\in \RR^d$.
 For $\varphi \in C([0,T], \RR^d)$ we define
$\mathfrak{S}_{\varphi, x} := \{ g \in \mathfrak{S} ~|~ \varphi= \gG^{0,x}(g)\}$  the preimage of $\varphi$ under $\gG^{0,x}$ and set   $\JJ_{x, T}: \DD([0,T], \RR^d) \ra [0, \infty]$ 
\begin{align} \label{eq: rate function}
\JJ_{x,T}(\varphi) := \displaystyle \inf_{g \in \mathfrak{S}_{\varphi, x}} \eE_T(g),
\end{align}
with the convention that $\inf \emptyset=\infty$. 

\begin{theorem} \label{thm: LDP full process} Let Hypotheses \ref{condition: det dynamical system}, \ref{condition: the measure nu} and \ref{condition: on the multiplicative coefficient}
be satisfied for some $\nu\in \fM$, $T>0$ and $x \in D$ fixed and let 
$X^{\e,x} = (X^{\e, x}_t)_{t\in [0, T]}, \varepsilon>0$, 
be the  strong solution of (\ref{eq: the sde full perturbation}) given in Theorem \ref{thm: existence sol sde}. 
Then the family $(X^{\e, x})_{\e>0}$ satisfies a LDP with the good rate function $\JJ_{x,T}$ given by \eqref{eq: rate function} 
in the Skorokhod space $\DD([0,T], \RR^d)$. This means that, for any $a \gqq 0$ the 
sublevel set $\{ \JJ_{x,T} \lqq a\}$ is compact in $\DD([0,T],\RR^d)$ and for any 
$G \subset \DD([0,T], \RR^d)$ open and $F \subset \DD([0,T], \RR^d)$ closed,
\begin{align*}
\displaystyle \liminf_{\varepsilon \ra 0} \varepsilon \ln \bar \PP(X^{\varepsilon,x} \in G) &\gqq - \displaystyle \inf_{\varphi \in G} \JJ_{x,T}(\varphi) \text{ and } \\
\displaystyle \limsup_{\varepsilon \ra 0} \varepsilon \ln \bar \PP(X^{\varepsilon,x} \in F) & \lqq - \displaystyle \inf_{\varphi \in F} \JJ_{x,T}(\varphi).
\end{align*}
\end{theorem}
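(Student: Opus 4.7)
The plan is to verify the sufficient abstract criteria for the weak convergence approach to large deviations for functionals of Poisson random measures, as established in \cite{BDM11}. Thanks to Theorem \ref{thm: existence sol sde}, we may write $X^{\varepsilon, x} = \gG^{\varepsilon, x}(\varepsilon N^{1/\varepsilon})$ for a measurable map $\gG^{\varepsilon, x}$, so that the variational representation of \cite{BDM11} applies. Together with the definition of $\JJ_{x,T}$ through the skeleton $\gG^{0,x}(g) = U^{g}(\cdot \,; x)$, the LDP follows once the two conditions of \cite{BDM11}, Condition 2.2, are established: (i) for each $M \gqq 0$ the set $K_M := \{U^{g}(\cdot\, ;x) : g \in \mathfrak{S}^M\}$ is a compact subset of $\DD([0,T], \RR^d)$ (sitting inside $C([0,T], \RR^d)$); and (ii) for any family of controls $\{\varphi^{\varepsilon}\}_{\varepsilon>0}$ predictable w.r.t.~$(\fF_t)$ and satisfying $\eE_T(\varphi^{\varepsilon}) \lqq M$ almost surely, converging in distribution to a limit $\varphi$ in an appropriate topology on the sublevel set $\mathfrak{S}^M$, the corresponding controlled jump diffusions $\widetilde X^{\varepsilon, \varphi^{\varepsilon}}$ converge in distribution to $U^{\varphi}(\cdot\, ; x)$ in $\DD([0,T], \RR^d)$. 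The goodness of $\JJ_{x,T}$ (i.e.~compactness of sublevel sets) is then inherited from (i) and the definition of $\JJ_{x,T}$ as an infimum over $\mathfrak{S}_{\varphi, x}$.

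For the skeleton condition (i), the key a-priori estimate is the bound $\int_0^T \int_{\RR^d \setminus \{0\}} |g(s,z)-1| |z| \nu(dz) ds \lqq C(M)$, uniform for $g \in \mathfrak{S}^M$. This follows from Young's inequality $ab \lqq (a \log a - a + 1) + e^b$ applied with $a = g(s,z)$ and $b = \sigma |z|^2$ together with the entropy bound and the exponential integrability \eqref{eq: integrability cond measure}; similar estimates provide higher-order $|z|^p$-tails. Plugging this into \eqref{eq: controlled ODE} and invoking Hypothesis \ref{condition: on the multiplicative coefficient} together with a Gronwall argument yields the uniform bound \eqref{eq: uniform bound controlled odes} as well as equicontinuity in $t$ of the family $\{U^{g}\}_{g \in \mathfrak{S}^M}$. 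Arzel\`a--Ascoli produces a continuous cluster point $\varphi$ for any sequence $g_n \in \mathfrak{S}^M$; a weak-$L^1$ compactness argument (via de la Vall\'ee Poussin, permitted by the entropy bound) extracts a further subsequence with $g_n \rightharpoonup \tilde g$ in a weighted sense, and lower semicontinuity of $\eE_T$ gives $\tilde g \in \mathfrak{S}^M$. Continuity of $b$ and $G$ then identifies $\varphi = U^{\tilde g}(\cdot\, ;x)$.

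For the weak convergence condition (ii), the controlled stochastic equation reads
\begin{align*}
\widetilde X^{\varepsilon, \varphi^\varepsilon}_t &= x + \int_0^t b(\widetilde X^{\varepsilon, \varphi^\varepsilon}_s) ds + \varepsilon \int_0^t \int_{\RR^d \setminus \{0\}} G(\widetilde X^{\varepsilon, \varphi^\varepsilon}_{s-}) z\, \widetilde N^{\varepsilon^{-1} \varphi^\varepsilon}(ds, dz) \\
 &\quad + \int_0^t \int_{\RR^d \setminus \{0\}} G(\widetilde X^{\varepsilon, \varphi^\varepsilon}_{s-}) z (\varphi^\varepsilon(s,z) - 1)\, \nu(dz) ds,
\end{align*}
where $\widetilde N^{\varepsilon^{-1} \varphi^\varepsilon}$ is the compensated version of the thinned Poisson measure on $[0,T]\times \RR^d \setminus\{0\} \times [0,\infty)$ selected by the indicator $\mathbf{1}_{[0, \varepsilon^{-1} \varphi^\varepsilon(s,z)]}(r)$ in the spirit of \eqref{eq: PRM as controlled random measure}. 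I would first establish tightness in $\DD([0,T], \RR^d)$ by Aldous's criterion, using a-priori moment bounds obtained via localization at a slowly growing threshold $R_\varepsilon$. The large-jump part is handled directly using $\bar \PP$-negligibility via a Borel--Cantelli argument enabled by \eqref{eq: integrability cond measure}, while the small-jump martingale is controlled by the Bernstein-type concentration inequality of \cite{DZ01}: the $\varepsilon$-prefactor precisely cancels the $\varepsilon^{-1}$-acceleration of the intensity at the level of predictable quadratic variation, yielding decay to $0$ in probability of the stochastic integral. The drift term is passed to the limit via a Skorokhod representation, using the joint convergence $(\widetilde X^{\varepsilon, \varphi^\varepsilon}, \varphi^\varepsilon) \Rightarrow (U^{\varphi}, \varphi)$, the Lipschitz continuity of $G$ and the uniform integrability guaranteed by \eqref{eq: integrability cond measure}.

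The principal technical difficulty, and the main point of departure from \cite{BCD13}, is the absence of any boundedness on the jumps: only exponential integrability is available, so the naive second-moment estimates used in \cite{BCD13} fail. The localization scheme based on \cite{DZ01} is the essential replacement and is what permits both the a-priori bounds for the controlled jump diffusions and the exponential negligibility of the cut-off error. Once (i) and (ii) are established, the conclusions of \cite{BDM11}, Theorem 2.3, yield the LDP with the good rate function $\JJ_{x,T}$ in $\DD([0,T], \RR^d)$, completing the proof.
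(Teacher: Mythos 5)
Your proposal is correct and follows essentially the same route as the paper: verification of the abstract sufficient conditions of \cite{BDM11, BCD13} via the entropy/Young-inequality bounds on the controls (Lemma \ref{lemma: integrability controls}), Arzel\`a--Ascoli compactness for the deterministic skeletons, and localization plus the Bernstein-type inequality of \cite{DZ01} for tightness and identification of the limit of the controlled jump diffusions, concluding with Theorem 2.4 of \cite{BCD13}. The only differences (Aldous's criterion instead of the explicit C-tightness argument, and a Borel--Cantelli treatment of large jumps) are implementation details, not a different strategy.
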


\medskip
\subsubsection{The asymptotic first exit problem of $X^{\varepsilon,x}$ from $D$ as $\varepsilon \ra 0$} \label{subsection: asymptotic exit time}

 We make the additional assumptions as follows.
\begin{condition}\label{condition: domain}  Let us consider a bounded domain $D \subset \RR^d$ 
  with $0 \in D$, $\partial D \in \cC^1$ and that $b$ is inward-pointing on $\partial D$, that is, 
\begin{align*}
\langle b(z), n(z) \rangle < 0, \quad \mbox{ for all } z \in \partial D, 
\end{align*}
where the vector field $\pd D \ni z \mapsto n(z)\in \RR^d$ denotes the outer normal on $\pd D$.
\end{condition}
\begin{rem}
The first statement of Hypothesis \ref{condition: domain} implies that 
the solution of \eqref{eq: determinsitic ode} is 
positive invariant on $\bar D$, that is, for all $x \in \bar D$, we have 
$X^{0, x}_t \in D$ for all $t \gqq 0$ 
and $X^{0, x}_t \rightarrow 0$ as $ t \ra \infty$.  
\end{rem}
\no Given $\e>0$, $x \in D$ and $\nu \in \fM$ satisfying 
Hypotheses \ref{condition: det dynamical system}, \ref{condition: the measure nu}, 
\ref{condition: on the multiplicative coefficient} and \ref{condition: domain} 
we define the first exit time of the solution $X^{\e,x}$ of \eqref{eq: the sde full perturbation} 
from $D$ 
\begin{align} \label{eq: the first exit time}
\sigma^\e(x) := \inf \{ t \gqq 0 ~|~ X^{\e,x}_t \notin D \}  
\end{align}
and the first exit location $X^{\e, x}_{\sigma^\e(x)}$. 

\no The function $V$ quantifying the cost of shifting the intensity jump measure by a scalar control $g$ and steering $U^g(t;x)$ from its initial position $x$ to some $z\in \RR^d$ in cheapest time is defined as 
 \begin{align} \label{eq: running cost}
 V(x,z) := \displaystyle \inf_{T>0} \inf \Big \{ \JJ_{x,T} (\varphi) ~|~ \varphi \in \DD([0,T], \RR^d): 
 \varphi(T)=z \Big \} \quad \text{ for } x,z \in \RR^d.
 \end{align}
 The function $V(0,z)$ is called the quasi-potential of the stable state $0$ with potential height 
 \begin{align} \label{eq: the potential}
 \bar{V}:= \displaystyle \inf_{z \notin D} V(0,z).
 \end{align}
We are ready to present our main result. The proof is the combination of Corollary \ref{corollary: finite height}, 
Theorem \ref{chpt2: thm upper bound first exit time}, Theorem \ref{chpt2: thm lower bound first exit time} 
and Remark \ref{rem: exit location} in Section \ref{sec: first exit}.
\begin{theorem} \label{thm: first exit time}
Let Hypotheses  \ref{condition: det dynamical system}, \ref{condition: the measure nu}, 
\ref{condition: on the multiplicative coefficient} and \ref{condition: domain} be satisfied. 
Then $\bar V < \infty$ and we obtain the following result.
\begin{enumerate}
 \item \label{thm: exit time}
For any $x \in D$ and $\delta>0$, we have 
\begin{align} \label{eq: first exit time-limit1}
\displaystyle \lim_{\e \ra 0} \bar \PP \Big ( e^{\frac{\bar V - \delta}{\e}}< \sigma^\e(x) < e^{\frac{\bar V + \delta}{\e}} \Big )=1.
\end{align}
Furthermore, for all $x \in D$ it follows $\displaystyle \lim_{\e \ra 0} \e \ln \bar \EE[\sigma^\e(x)]= \bar V.$

\item \label{thm: location exit}
For any closed set $F \subset D^c$ satisfying $\displaystyle \inf_{z \in F} V(0,z) > \bar V$ 
and any $x \in D$, we have 
 \begin{align} \label{eq: first exit time-limit2}
 \displaystyle \lim_{\e \ra 0} \bar \PP \Big ( X^{\e,x}_{\sigma^\e(x)} \in F \Big )=0.
 \end{align}
In particular, if $\bar V$ is taken by a unique point $z^{*} \in D^c$, 
it follows, for any $x \in D$ and $\delta>0$, that 
\begin{align} \label{eq: first exit time-limit3}
\displaystyle \lim_{\e \ra 0} \bar \PP( |X^{\e,x}_{\sigma^\e(x)} - z^{*}| < \delta )=1.
\end{align}
\end{enumerate}
 \end{theorem}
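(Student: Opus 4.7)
The plan is to follow the classical Freidlin--Wentzell architecture, adapted to the Skorokhod $J_1$ setting and to the $1/\e$-acceleration of the driving Poisson random measure. First I would verify that $\bar V<\infty$ by exhibiting one admissible control: choose a smooth curve $\phi:[0,T]\to\RR^d$ with $\phi(0)=0$, $\phi(T)\notin\bar D$, and define $h(s):=(\phi'(s)-b(\phi(s)))/G(\phi(s))$, which is bounded because $G$ is non-vanishing, $b$ is $C^1$ and $\bar D$ is compact. Using Hypothesis \ref{condition: the measure nu} (finite intensity and non-vanishing density of $\nu$), one can pick $g(s,z)-1$ supported in a bounded set away from $0$ so that $\int (g(s,z)-1)z\,\nu(dz)=h(s)$ with $\eE_T(g)<\infty$; then $U^g=\phi$ is an exit trajectory witnessing $\bar V\le \eE_T(g)<\infty$.

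\textbf{Upper bound on the exit time.} For the upper bound on $\sigma^\e(x)$ I would fix $\delta>0$, choose $T>0$ large enough that the deterministic flow starting from any $y\in\bar D$ enters a small neighborhood $B_\rho(0)$ before time $T/2$ (Remark after Hypothesis \ref{condition: domain}), and from there an admissible control can push the system out of $\bar D$ within time $T/2$ at cost at most $\bar V+\delta/4$. The LDP of Theorem \ref{thm: LDP full process} applied uniformly in the initial condition over the compact set $\bar D$ (after verifying that $\gG^{\varepsilon,x}$ is jointly continuous enough in $x$ via the monotonicity Hypothesis \ref{condition: det dynamical system}.1) gives a uniform lower bound
\begin{equation*}
\inf_{y\in\bar D}\bar\PP_y\bigl(\sigma^\e\le T\bigr)\;\ge\; \exp\!\left(-\tfrac{\bar V+\delta/2}{\e}\right)
\end{equation*}
for all sufficiently small $\e$. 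Iterating via the strong Markov property of $X^{\e,x}$ over $N=\lceil e^{(\bar V+\delta)/\e}/T\rceil$ windows of length $T$ yields
\begin{equation*}
\bar\PP_x\bigl(\sigma^\e>e^{(\bar V+\delta)/\e}\bigr)\;\le\;\bigl(1-e^{-(\bar V+\delta/2)/\e}\bigr)^N\;\longrightarrow\;0,
\end{equation*}
and $\e\ln\bar\EE[\sigma^\e(x)]\le \bar V+o(1)$ follows by integrating the tail.

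\textbf{Lower bound on the exit time.} This is the main obstacle, because in our setting the standard stickyness estimate
\begin{equation*}
\bar\PP_y\Bigl(\sup_{s\le t}|X^{\e,y}_s-y|>\rho\Bigr)\le e^{-c/\e}
\end{equation*}
is only available for $t$ of order $\e$, not order $1$, due to the $1/\e$-acceleration of the intensity. To handle this I would build a modified cycle decomposition in the spirit of \cite{FW98, DZ98}: choose radii $0<\rho<\rho'<\dist(0,\partial D)$, let $\tau_0:=0$, and recursively define $\tau_{k+1}$ as the first time after $\tau_k$ at which either $X^{\e,\cdot}$ enters $\overline{B_\rho(0)}$ having previously left $B_{\rho'}(0)$, or hits $\partial D$. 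Between returns to $\bar B_\rho(0)$, I would split the dynamics into microscopic steps of length $\e$ governed by the Bernstein-type estimate of \cite{DZ01} (to control jumps) together with the uniform LDP of Theorem \ref{thm: LDP full process} over $[0,T]$ for fixed large $T$: any trajectory that leaves $D$ during a cycle has rate $\JJ_{\cdot,T}\ge \bar V-\delta/3$ on the corresponding compact set of initial points, so each cycle exits with probability at most $e^{-(\bar V-\delta/3)/\e}$. Since the expected number of cycles before time $e^{(\bar V-\delta)/\e}$ is at most $C e^{(\bar V-\delta)/\e}$ (using that cycle lengths are bounded below by a deterministic constant controlled by the minimal travel time from $\partial B_\rho$ to $\partial B_{\rho'}$), a union bound shows $\bar\PP_x(\sigma^\e<e^{(\bar V-\delta)/\e})\to 0$. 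Combined with the upper bound this gives \eqref{eq: first exit time-limit1} and $\e\ln\bar\EE[\sigma^\e]\to\bar V$.

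\textbf{Exit location.} Finally, for the second assertion I would fix a closed $F\subset D^c$ with $\kappa:=\inf_{z\in F}V(0,z)-\bar V>0$. The hitting-set $\{\varphi\in\DD([0,T],\RR^d)\;:\;\varphi(\sigma(\varphi))\in F\}$ is closed in the $J_1$ topology away from times where $\varphi$ grazes $\partial D$ tangentially, which Hypothesis \ref{condition: domain} rules out (inward-pointing $b$ on $\partial D$ excludes tangencies for near-optimal paths). Combining the LDP upper bound on $\{X^{\e,x}_{\sigma^\e}\in F\}$ with the LDP lower bound on $\{\sigma^\e<\infty\}\cap\{X^{\e,x}_{\sigma^\e}\text{ close to the minimizer of }V(0,\cdot)\}$, and invoking the already-proven estimate \eqref{eq: first exit time-limit1}, yields
\begin{equation*}
\bar\PP\bigl(X^{\e,x}_{\sigma^\e}\in F\bigr)\;\le\;\exp\!\left(-\tfrac{\kappa/2}{\e}\right)\;\longrightarrow\;0,
\end{equation*}
and the special case of a unique minimizer $z^*$ follows by applying the above to $F=\{|z-z^*|\ge\delta\}\cap\bar D^c$. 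The hardest step is unquestionably the lower bound on $\sigma^\e$: the accelerated intensity forces one to re-run the Markov-chain argument at the microscopic scale $\e$ while still aggregating control at the macroscopic scale $e^{\bar V/\e}$, which is precisely the point where the Bernstein inequality from \cite{DZ01} and the uniform LDP on the Skorokhod space have to be combined carefully.
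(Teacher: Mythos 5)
Your overall architecture coincides with the paper's: finiteness of $\bar V$ by exhibiting an explicit admissible control (cf.\ Proposition \ref{condition: potential} and Corollary \ref{corollary: finite height}), the upper bound on $\sigma^\e$ by a uniform-in-the-initial-point lower bound on exit within a fixed time window followed by a geometric iteration via the strong Markov property (Lemmas \ref{chpt2: lemma first lemma upper bound}, \ref{chpt2: lemma second lemma upper bound} and Theorem \ref{chpt2: thm upper bound first exit time}), and the lower bound by a Markov-chain/cycle decomposition between $\bar B_\rho(0)$ and $\partial B_{\rho'}(0)$. However, there is a genuine gap exactly at the step you yourself flag as the hardest one. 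You justify the count of cycles before time $e^{(\bar V-\delta)/\e}$ by asserting that ``cycle lengths are bounded below by a deterministic constant controlled by the minimal travel time from $\partial B_\rho$ to $\partial B_{\rho'}$''. For the jump diffusion \eqref{eq: the sde full perturbation} no such minimal travel time exists: under Hypothesis \ref{condition: the measure nu} the L\'evy measure has full support, so the process can cross the annulus (or leave $D$) in a single jump at any instant, and even after truncating large jumps there is no deterministic lower bound on the duration of an excursion. Consequently the number of cycles completed before time $e^{(\bar V-\delta)/\e}$ is genuinely random and cannot be bounded by $Ce^{(\bar V-\delta)/\e}$ pathwise; moreover a ``union bound'' over a random number of events controlled only in expectation is not a union bound. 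The paper resolves this differently: it works with deterministic windows of length $T(\e)=\e$, so $k(\e)\approx e^{(\bar V-\delta)/\e}/\e$ of them, uses the event inclusion of Claim \ref{claim: inclusion sets for conclusion first exit time problem}, and then must show that each ``short cycle'' event $\{\zeta^x_m-\vartheta^x_{m-1,\rho}\lqq \e\wedge\sigma^\e(x)\}$ is asymptotically exponentially negligible at \emph{every} rate. That is the content of Lemma \ref{chpt2: lemma second lemma lower bound first exit time}, whose proof is not a routine application of the Bernstein inequality: one first removes jumps larger than $\sqrt{\e}|\ln\e|^q$ using the exponential moment of Hypothesis \ref{condition: the measure nu}.2, then controls the quadratic variation of the truncated process by an exponential Chebyshev/Campbell estimate that relies on the non-degeneracy condition E.3 near the origin, and only then applies the inequality of \cite{DZ01}. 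Your sketch invokes Bernstein and the scale $\e$ but supplies neither the truncation nor the quadratic-variation estimate, and these are precisely what replace the (false) minimal-travel-time claim.

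Two secondary points. First, your lower bound also needs the preliminary fact that from an arbitrary $x\in D$ the process reaches $\bar B_\rho(0)$ before leaving $D$ with probability tending to $1$ (the paper's Lemma \ref{chpt2: lemma first lemma in lower bound first exit time}); your cycle construction starts implicitly from $\bar B_\rho(0)$ and this initial descent is not exponentially small, so it must be handled separately. Second, for the exit location you argue closedness of the set $\{\varphi:\varphi(\sigma(\varphi))\in F\}$ via absence of tangencies; this functional of the exit time is delicate in the $J_1$ topology, and the paper sidesteps it by bounding $\bar\PP(X^{\e,x}_{\vartheta_\rho}\in F)$ through the closed set of paths that merely \emph{hit} $F$ before a fixed time $T$ (closed by right-continuity, Lemma \ref{claim: second topological claim}) plus the event $\{\vartheta_\rho>T\}$, see Lemma \ref{chpt2: lemma third lemma lower bound first exit time}; the remaining passage to statement 2 of the theorem is only cited from the literature in Remark \ref{rem: exit location}.
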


\bigskip

\subsection{Extensions and remarks} \label{sec: extensions remarks}
In this subsection we discuss the analogous statements of Theorem \ref{thm: LDP full process} and 
Theorem \ref{thm: first exit time} in a  general framework with  a vector-valued function 
$G:\RR^d \times \RR^d \longrightarrow \RR^d$ and a L\'{e}vy measure $\nu$ with infinite intensity. 
It is the aim of this subsection to present a sufficient condition for the LDP and the solution of the Kramers 
problem of the following family of processes $(X^{\varepsilon,x})_{\varepsilon >0}$. Such a condition is given below as
Hypothesis \ref{condition: generalized statement-potential} and  is formulated as a continuity property for the controlled dynamics. This turns out to be hard to be verified in general and needs to be studied case by case.

\no For every $\varepsilon>0$ and $x \in D$ we consider the unique strong solution $(X^{\varepsilon,x}_t)_{t \gqq 0}$ of
\begin{align}\label{eq: generalized statement- SDE}
 X^{\e,x}_t = x + \int_0^t b(X^{\e,x}_s) ds + \int_0^t \int_{\RR^d\backslash\{0\}} G(X^{\e,x}_{s-},z) \;\e \tilde N^{\frac{1}{\e}}(ds,dz), 
\quad t \gqq 0.
\end{align}
\no The function $b$ remains unchanged satisfying Hypothesis \ref{condition: det dynamical system}
 and \ref{condition: domain}.  For every $\varepsilon>0$ $\varepsilon \tilde N^{\frac{1}{\varepsilon}}$ 
 is a compensated Poisson random measure defined on $(\mathfrak{M}, \bB(\mathfrak{M}), \bar \PP)$ with 
 compensator given by $ds \otimes \frac{1}{\varepsilon} \nu(dz)$ where $\nu \in \mathfrak{M}$ satisfies 
 the following assumption which replaces Hypothesis \ref{condition: the measure nu}. 
 \begin{condition} \label{condition:generalized statement- the measure nu} 
The measure $\nu \in \mathfrak{M}$ is non-atomic and satisfies the following conditions.
\begin{itemize}
\item[\textbf{E.1:}] The measure $\nu$ is a L\'{e}vy measure, i.e. $\int_{\RR^d \backslash \{0\}} (1 \wedge |z|^2) \nu(dz)< \infty$.
\item[\textbf{E.2:}] $\nu \in \mathfrak{M}$ satisfies  $\int_{B^c_1(0)} e^{\Gamma |z|^2} \nu(dz) < \infty$ for some $\Gamma>0$.
\item[\textbf{E.3:}] There are positive constants $p, q$ such that 
\begin{equation}\label{E.3}
\limsup_{\e\ra  0+} ~|\ln(\e)|^p \cdot\int_{|z|\lqq \sqrt{\e} |\ln(\e)|^q} |z|^2 \nu(dz) < \infty. 
\end{equation}
\end{itemize}
\end{condition}

\begin{rem} 
\begin{enumerate}
\item We stress that Hypothesis \ref{condition:generalized statement- the measure nu} is weaker 
than Hypothesis \ref{condition: the measure nu} and covers a wide class of L\'{e}vy measures. 
The measure $\nu(dz)= e^{- |z|^\beta}dz$, $\beta \gqq 2$, is an important benchmark case and is 
the model case covered by Hypothesis \ref{condition: the measure nu}. 
In the literature it is known as a super-exponentially light jump measure.
\item More generally Hypothesis \ref{condition:generalized statement- the measure nu} covers a class of L\'{e}vy measures which mimics the strongly tempered 
$\alpha$-stable measures introduced by Rosi\'nski \cite{Ros07}, however, with a Gaussian damping term in order to guarantee the moment condition in Hypothesis B.2 (and Hypothesis E.2 resp.). We define a \textit{Gauss-tempered $\alpha$-stable L\'{e}vy measure} $\nu$ as follows. For the radial coordinate $r=|z|$, for some 
measure $R \in \fM$, $\gamma>0$ and $\alpha \in (0, 2)$ satisfying 
$\int_{\RR^d \backslash \{0\}} |z|^{\alpha} R(dz) < \infty$ we set
\begin{equation}\label{d:Gaussian-tempered}
\nu(A) := \int_{\RR^d \backslash \{0\}} \int_0^\infty \ind_A(rz) 
\frac{e^{- \frac{\gamma}{2} r^2}}{r^{\al+1}}  dr  R(dz), \quad A \in \bB(\RR^d \backslash \{0\}).
\end{equation} 
\item Hypothesis \ref{condition:generalized statement- the measure nu} is satisfied by 
L\'{e}vy measures with compact support as in the seminal paper  \cite{BDM11}.
\item Hypothesis \ref{condition:generalized statement- the measure nu}.3 is a non-degeneracy condition on the pole of 
the L\'evy measure in $0$. It is trivially satisfied for finite L\'evy measures (Hypothesis \ref{condition: the measure nu}.1). In addition, it is satisfied for any Gauss-tempered $\alpha$-stable L\'{e}vy measure $\nu$ 
and any $\al\in (0,2)$ as defined in \eqref{d:Gaussian-tempered}
\begin{align*}
\int_{|z|\lqq \sqrt{\e} |\ln(\e)|^q} |z|^2 \nu(dz) 
&= \int_{\RR^d \backslash \{0\}} \int_0^\infty |rv|^2 \ind\{|rv|\lqq \sqrt{\e} |\ln(\e)|^q\} \frac{e^{- \frac{\gamma}{2} r^2}}{r^{\al+1}}  dr  R(dv)\\
&\lqq \int_{\RR^d \backslash \{0\}} |v|^2 \int_0^\infty |r|^{1-\al} \ind\{|r|\lqq \frac{\sqrt{\e} |\ln(\e)|^q}{|v|}\}   dr  R(dv)\\
&= \int_{\RR^d \backslash \{0\}} |v|^2 \frac{1}{2-\al} \Big(\frac{\sqrt{\e} |\ln(\e)|^q}{|v|}\Big)^{2-\al}   R(dv)\\
&= \frac{\big(\sqrt{\e} |\ln(\e)|^q\big)^{2-\al}}{2-\al} \int_{\RR^d \backslash \{0\}} |v|^{\al}R(dv).
\end{align*}
The expression on the right-hand side is finite by definition 
and tends to $0$ as $\e\ra 0$ with polynomial rate and consequently satisfies the relation \eqref{E.3}.
\end{enumerate}
\end{rem}
\no The vector-valued multiplicative coefficient 
$G:\RR^d \times \RR^d \ra \RR^d$ satisfies the following assumption that replaces Hypothesis \ref{condition: on the multiplicative coefficient}.
\begin{condition} \label{condition: generalized statement-on the multiplicative coefficient}
 There exists $L>0$ such that for all 
  $x,y,z \in \RR^d$  we have 
  \begin{align*}
|G(x,z)- G(y,z)|&\lqq L |z| |x-y| \qquad \mbox{ and } \qquad |G(x,z)| \lqq L |z| (1 + |x|).
\end{align*}
\end{condition}

\begin{rem} Under Hypotheses \ref{condition: det dynamical system}, \ref{condition:generalized statement- the measure nu} (which covers \ref{condition: the measure nu}) and \ref{condition: generalized statement-on the multiplicative coefficient} (which covers \ref{condition: on the multiplicative coefficient}) the statements of Theorem \ref{thm: existence sol sde} remain valid.
\end{rem}

\no The following hypothesis is the fundamental assumption of this subsection.
\begin{condition} \label{condition: generalized statement-potential}  
For every $\rho_0>0$ there exist a constant 
$M>0$ and a non-decreasing function $\xi:[0, \rho_0] \ra \RR^{+}$ 
with $\lim_{\rho \ra 0} \xi(\rho)=0$ satisfying the following. 
For all $x_0, y_0 \in \RR^d$ such that $|x_0- y_0| \lqq \rho_0$ there exist 
$\Phi \in C([0, \xi(\rho_0)],\RR^d)$ and $g \in \mathfrak{S}^M$ 
such that $\Phi(\xi(\rho_0))=y_0$ and solving
\begin{align} \label{eq: generalized statement-cond on potential: controllability cond C1}
 \Phi(t) = x_0 + \int_0^t b(\Phi(s)) ds + \int_0^t \int_{\RR^d \backslash \{ 0\}} G(\Phi(s),z) (g(s,z)-1) \nu(dz) ds, \quad t \in [0, \xi(\rho_0)].
\end{align}
\end{condition}
\begin{rem} \label{rem: suffering MC}
Hypothesis \ref{condition: generalized statement-potential} is covered under the assumptions stated in Theorem \ref{thm: first exit time}. This is proved in Proposition \ref{prop: continuity pps cost functional}. 
\end{rem}
\paragraph*{Construction of the good rate function and the potential.} 
Let Hypotheses \ref{condition: det dynamical system}, \ref{condition: domain}, 
\ref{condition:generalized statement- the measure nu}, \ref{condition: generalized statement-on the multiplicative coefficient} 
and \ref{condition: generalized statement-potential} be satisfied.  For fixed $x \in \RR^d$, $T>0$, 
define $\eE_T$ by (\ref{eq: entropy functional}) and $\mathfrak{S}$ by (\ref{eq: the sublevel sets}). 
Let $\gG^{0,x}: \mathfrak{S} \longrightarrow C([0,T];\RR^d)$ defined as $\gG^{0,x}(g):= U^{g}(\cdot\,; x)$ 
where the function $U^g$ is the unique solution of the  following controlled integral equation
\begin{align} \label{eq: generalized statement-controlled ODE}
U^{g} (t;x) = x + \int_0^t b(U^g(s;x))ds + \int_0^t \int_{\RR^d \backslash \{ 0\}} G(U^g (s;x),z)(g(s,z)-1) \nu(dz)ds, ~t \in [0,T]. 
\end{align}
With this notation the definition of the good rate function $\JJ_{x,T}$ in (\ref{eq: rate function}) 
and the potential height $\bar V$  in (\ref{eq: the potential}) remain unchanged and the results of 
Theorem \ref{thm: LDP full process} and Theorem \ref{thm: first exit time} are carried over in the next theorem.

\begin{theorem}
For fixed $T>0$,  $x \in \RR^d$ and $\nu \in \mathfrak{M}$ let Hypotheses \ref{condition: det dynamical system},
\ref{condition:generalized statement- the measure nu}, \ref{condition: generalized statement-on the multiplicative coefficient} 
and \ref{condition: generalized statement-potential} be satisfied. Let $(X^{\varepsilon,x})_{\varepsilon>0}$ 
be the family of strong solutions of (\ref{eq: generalized statement- SDE}). Then we have the following.
\begin{enumerate}  \item The family $(X^{\varepsilon,x})_{\varepsilon>0}$ satisfies a LDP in $\DD([0,T], \RR^d)$ 
with the good rate function $\JJ_{x,T}$ given by~(\ref{eq: rate function}).
\item For any domain $D \subset \RR^d$ satisfying Hypothesis \ref{condition: domain} and $x \in D$ the results (\ref{eq: first exit time-limit1})-(\ref{eq: first exit time-limit3}) of Theorem \ref{thm: first exit time} hold for the respective exit time $\sigma^\varepsilon(x)$.
\end{enumerate}
\end{theorem}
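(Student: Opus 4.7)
The plan is to reduce everything to the arguments already developed for Theorems \ref{thm: LDP full process} and \ref{thm: first exit time} by means of a truncation of small jumps, which lets us handle the infinite intensity of Hypothesis \ref{condition:generalized statement- the measure nu}, and by invoking Hypothesis \ref{condition: generalized statement-potential} as a substitute for the explicit controllability that was available under Hypothesis \ref{condition: the measure nu}.

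For Part~1 I would cut off the jumps at the scale $\rho(\e) = \sqrt{\e}\,|\ln(\e)|^{q}$ suggested by Hypothesis \ref{condition:generalized statement- the measure nu}.3 and decompose the driving stochastic integral into a large-jump part supported on $\{|z|>\rho(\e)\}$ and a small-jump part supported on $\{|z|\lqq \rho(\e)\}$. The truncated dynamics $\hat X^{\e,x}$ driven only by the large jumps sees an $\e$-dependent, finite intensity L\'evy measure, so the two sufficient criteria of \cite{BDM11, BCD13} — compactness of $\gG^{0,x}(\mathfrak{S}^M)$ in $C([0,T];\RR^d)$ and weak convergence of the controlled processes along weakly converging controls — can be verified along the same lines as in Theorem \ref{thm: LDP full process}, using Hypothesis \ref{condition: generalized statement-on the multiplicative coefficient} instead of Hypothesis \ref{condition: on the multiplicative coefficient} in the Gronwall estimates. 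The residual small-jump martingale
\begin{equation*}
M^\e_t = \e \int_0^t \int_{|z|\lqq \rho(\e)} G(X^{\e,x}_{s-},z)\,\tilde N^{1/\e}(ds,dz)
\end{equation*}
has predictable quadratic variation bounded by a constant multiple of $\e \int_{|z|\lqq \rho(\e)}|z|^2\nu(dz)\cdot T(1+\sup_{s\lqq T}|X^{\e,x}_s|)^2$, and the Bernstein-type exponential inequality of \cite{DZ01} together with Hypothesis \ref{condition:generalized statement- the measure nu}.3 gives $\limsup_{\e\to 0}\e\ln \bar\PP(\sup_{t\lqq T}|M^\e_t|>\delta) = -\infty$ for every $\delta>0$. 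A standard exponential-equivalence argument then transfers the LDP from $\hat X^{\e,x}$ to $X^{\e,x}$ with the same good rate function $\JJ_{x,T}$.

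For Part~2 I would re-execute the scheme of Theorem \ref{thm: first exit time}. The upper bound in \eqref{eq: first exit time-limit1} and the finiteness $\bar V<\infty$ follow from the LDP combined with Hypothesis \ref{condition: generalized statement-potential}, which produces for any two sufficiently close points a controlled path of bounded cost that connects them on a short time window; concatenating such pieces along a chain of intermediate points yields a finite-cost trajectory from a neighborhood of $0$ to $D^c$ of total cost arbitrarily close to $\bar V$. The lower bound in \eqref{eq: first exit time-limit1} is obtained through the modified Markov chain approximation indicated in the introduction, tracking the hitting times of concentric balls around $0$ on time blocks of length of order $\e$ rather than order $1$; the corresponding stickiness estimates on such short intervals again follow from the small-jump control provided by Hypothesis \ref{condition:generalized statement- the measure nu}.3. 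The exit location statements \eqref{eq: first exit time-limit2}--\eqref{eq: first exit time-limit3} then follow from the LDP by the classical argument isolating the minimizer $z^*$ of $V(0,\cdot)$ on $\partial D$, as in the proof of Theorem \ref{thm: first exit time}.

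The main obstacle will be the precise control of the small-jump contribution in the infinite-intensity regime. Both the exponential equivalence in Part~1 and the stickiness estimates on intervals of length $\e$ in the lower bound of Part~2 rest on super-exponentially sharp tail bounds for $\sup_{t\lqq T}|M^\e_t|$, and extracting them from the Bernstein inequality of \cite{DZ01} requires a careful balance of the truncation level $\rho(\e)$, the polynomial smallness provided by Hypothesis \ref{condition:generalized statement- the measure nu}.3, and the scale $\e$ at which the LDP lives. A secondary difficulty is that Hypothesis \ref{condition: generalized statement-potential} provides only the abstract existence of short-time patching controls on windows of length $\xi(\rho_0)$, in contrast with the explicit minimizers available in the finite-intensity case via the Fenchel--Legendre formalism; this forces quantitative control of how $\xi(\rho_0)\to 0$ as $\rho_0\to 0$ when assembling the competitor paths used in both the upper bound on $\sigma^\e(x)$ and the identification of the quasi-potential $\bar V$.
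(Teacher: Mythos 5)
Your Part~2 follows the paper's own scheme: the paper proves Theorem \ref{thm: first exit time} in Section \ref{sec: first exit} with exactly the ingredients you name --- Hypothesis \ref{condition: generalized statement-potential} standing in for the explicit control construction of Proposition \ref{condition: potential}, the Markov chain on time blocks of length $\e$, and the stickiness Lemma \ref{chpt2: lemma second lemma lower bound first exit time}, whose proof already performs the cutoff at $\sqrt{\e}|\ln(\e)|^{q}$ and uses Hypotheses \ref{condition:generalized statement- the measure nu}.2--3 via Campbell's formula and the Bernstein inequality of \cite{DZ01}. For Part~1, however, the paper does not truncate at all: its point is that the weak-convergence proof of Theorem \ref{thm: LDP full process} was already written under the general Hypothesis \ref{condition:generalized statement- the measure nu} --- Lemma \ref{lemma: integrability controls} is stated and proved for infinite-intensity $\nu$, and the localization Propositions \ref{proposition: priori estimate controlled process} and \ref{proposition: a priori bound} only need $\int |z|^2\nu(dz)<\infty$, which follows from E.1--E.2 --- so that, with Hypothesis \ref{condition: generalized statement-on the multiplicative coefficient} replacing Hypothesis \ref{condition: on the multiplicative coefficient} in the Lipschitz/growth estimates, the verification of Condition 2.2 of \cite{BCD13} goes through verbatim and the theorem is obtained without any new argument. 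Finite intensity of $\nu$ was only ever used for the explicit controllability, which is precisely what Hypothesis \ref{condition: generalized statement-potential} assumes.

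Your alternative route for Part~1 (truncation at $\sqrt{\e}|\ln(\e)|^{q}$ plus exponential equivalence) is therefore a detour, and as sketched it has two gaps you would need to close. First, the large-jump driver has an $\e$-dependent intensity measure $\nu\restriction_{\{|z|>\sqrt{\e}|\ln(\e)|^{q}\}}$, so neither Theorem \ref{thm: LDP full process} nor the abstract criteria of \cite{BDM11, BCD13} (formulated for a fixed $\nu$ accelerated by $1/\e$) apply off the shelf; to identify the rate function of the truncated family as $\JJ_{x,T}$ built from the \emph{full} $\nu$ you must show that the controlled truncated dynamics converge, uniformly over the sublevel sets $\mathfrak{S}^M$, to the controlled equation \eqref{eq: generalized statement-controlled ODE} with the full measure --- which is essentially verifying the weak-convergence conditions for the untruncated process directly, i.e.\ the truncation buys nothing here. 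Second, your exponential-equivalence bound for the small-jump martingale carries the factor $(1+\sup_{s\lqq T}|X^{\e,x}_s|)^2$ inside the quadratic variation; on $[0,T]$ and on all of $\RR^d$ there is no exit-time stopping and no $\diam(D)$ available (unlike in Lemma \ref{chpt2: lemma second lemma lower bound first exit time}, where everything is stopped at $\sigma^\e(x)$), so you need an additional super-exponential localization of $\sup_{t\lqq T}|X^{\e,x}_t|$ in the spirit of Proposition \ref{proposition: priori estimate controlled process} before the Bernstein estimate delivers $\limsup_{\e\to0}\e\ln\bar\PP(\sup_{t\lqq T}|M^\e_t|>\delta)=-\infty$; the proposal does not supply this step.
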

\no In this setting the proof of the preceding results  is virtually identical to the proofs of 
Theorem \ref{thm: LDP full process} and Theorem \ref{thm: first exit time} and is therefore omitted.

\begin{rem} \label{rem: comment_measure}
We comment on the assumptions stated on Hypothesis B and further generalizations made on Hypothesis E for the L\'{e}vy measure $\nu$.
\begin{enumerate}
\item The exponential integrability assumption of Hypothesis \ref{condition: the measure nu}.2 (and analogously \ref{condition:generalized statement- the measure nu}.2) is sufficient to verify the abstract condition for a LDP in \cite{BDM11} as explained in the introduction. More precisely, it is used in the proof of auxiliary a-priori bounds for the controlled integrals stated in Lemma~\ref{lemma: integrability controls}. It is proved 
under the general Hypothesis \ref{condition:generalized statement- the measure nu}. 
In particular, the exponential Young inequality applied there seems near optimal and hard to relax. On the other hand even to derive a LDP for one dimensional empirical means of i.i.d. 
random variables some exponential integrability of the underlying law is necessary. 
See Lemma 2.2.5 (b) in the proof of the Cram\'er Theorem~2.3.3 in \cite{DZ98}.

\item A priori, the extension from Hypothesis B to Hypothesis E bears the difficulty, 
that it is hard to solve the control problem \eqref{eq: cond on potential: controllability cond C1} which under Hypotheses \ref{condition: the measure nu}.1 and \ref{condition: the measure nu}.3 can be solved explicitly in Proposition \ref{condition: potential}. 
As a consequence we state its solvability 
in Hypothesis \ref{condition: generalized statement-potential} as a proper hypothesis, 
since its solution may require rather different 
techniques to be verified in particular situations.
\end{enumerate} 
\end{rem}

\bigskip
\section{The large deviations principle} \label{sec: the LDP}
In this section we prove Theorem \ref{thm: LDP full process}. Let Hypotheses \ref{condition: det dynamical system}, 
\ref{condition: the measure nu} and \ref{condition: on the multiplicative coefficient} be satisfied for some $\nu\in \mathfrak{M}$.  
For every $\e>0$, $T>0$ and $x \in \RR^d$, we consider the strong solution $(X^{\e,x})_{t \in [0,T]}$ 
of the SDE \eqref{eq: the sde full perturbation}. By Theorem \ref{thm: existence sol sde}  the map 
$$
\gG^{\e,x} : \mathfrak{M} \ra \DD([0,T], \RR^d), \quad 
\gG^{\e,x} (\e N^{\frac{1}{\e}}):= X^{\e,x},
$$ 
is measurable with respect to the Borel sigma algebras associated to 
the vague convergence topology in $\mathfrak{M}$  
and the (Skorokhod) $J_1$ topology in $\DD([0,T], \RR^d)$.  
On the other hand, given $g \in \mathfrak{S}$, 
the wellposedness of the integral equation \eqref{eq: controlled ODE} 
yields the existence of a  measurable map 
$$
\gG^{0, x} :  \mathfrak{S} \ra C([0,T], \mathbb{R}^d), \quad \gG^{0,x} (g):= U^{g}(\cdot\,,x).
$$  
The main task in the proof of Theorem \ref{thm: LDP full process} is the 
verification of the two statements of Condition 2.2 in \cite{BCD13} for  $(\gG^{\varepsilon,x})_{\varepsilon>0}$ and $\gG^{0,x}$,
which combined imply the LDP (cf. Theorem 2.4 in \cite{BCD13}). We essentially 
follow the notation introduced in \cite{BDM11} and  \cite{BCD13}. 
\subsection{The weak convergence approach}
\paragraph*{Notation.}
Denote by $\bar \pP$ the predictable $\sigma$-field 
on $[0, T] \times \bar{\mathfrak{M}}$ with respect to the filtration $(\fF_t)_{t\in [0, T]}$. 
We define the space of positive (random) controls in $\bar \fM$
\begin{align*}
\bar{\mathfrak{A}}^+ := \Big \{ \varphi: [0,T] \times \RR^d \backslash \{0\} \times \bar{\mathfrak{M}} 
\ra [0, \infty) ~|~ \varphi \text{ is } (\bar \pP \otimes \bB(\RR^d \backslash \{0\}), \bB([0, \infty)))-\text{measurable } \Big \}.
\end{align*}
Given a covering of $\RR^d \backslash \{0\}$ by compact sets  $(K_n)_{n \in \NN}$ 
we define the set of the $n$-cutoff positive (random) controls
\begin{align*}
\bar{\mathfrak{A}}^+_{b,n} &:= \Big\{  \varphi \in \bar \fA^+ ~|~ \varphi(t, x, \bar m) 
\begin{cases}\in [\frac{1}{n}, n], & x\in K_n, \\ =1, & x\in K_n^c, \end{cases} 
\text{ for all } (t, \bar m) \in [0,T] \times \bar{\mathfrak{M}}\Big\}. 
 \end{align*} 
The set of positive bounded controls is then given by 
$$\bar{\mathfrak{A}}^+_b := \displaystyle \bigcup_{n \in \NN} \bar{\mathfrak{A}}^+_{b,n} 
\qquad \mbox{ and }\qquad \mathfrak{U}_{+}^{M} := \{\varphi \in \bar{\mathfrak{A}}_b^+: 
\varphi(., .,\bar{m}) \in \mathfrak{S}^M \quad \bar \PP-\text{a.s }  \}, \quad M>0,$$ 
is the set of positive bounded random controls whose entropy functional is $\bar \PP$-a.s. bounded by $M$. 
We associate to every $g \in \mathfrak{S}^M$ the measure 
\[
\bB([0,T] \times \RR^d \backslash \{0\}) \ni A \mapsto \nu_T^g(A):= \int_A g(s,z) \nu(dz)ds
\] 
and identify $\mathfrak{S}^M$ with the space of associated measures $\{ \nu_T^g ~|~g \in \mathfrak{S}^M \} \subset \mathfrak{M}$ 
equipped with the topology induced by the vague convergence on  $\mathfrak{M}$. 
We refer the reader to  Lemma 5.1 in \cite{BCD13} 
which ensures that this identity produces 
a topology in $\mathfrak{S}^M$ under which $\mathfrak{S}^M$ turns out to be compact. 

\no For any fixed $M>0$ and a family $(\varphi_\e)_{\e >0} \subset \mathfrak{U}_+^M$  
 we set $\psi_{\e}:= \frac{1}{\varphi_{\e}}$. 
The random measure $N^{\frac{\varphi_\e}{\e}}$ is a controlled random measure given by 
\begin{align*}
N^{\frac{\varphi_\e}{\e} } ([0,t] \times U) := \int_0^t \int_U \int_0^{\infty} 
\mathbf{1}_{[0, \frac{\varphi_\e}{\e}(s,z)]}(r) \bar N (ds,dz,dr) \quad \text{for all } t \in [0,T], U \in \bB(\RR^d \backslash \{0\}).
\end{align*}  
Recall that the canonical map $\bar N: \mathfrak{M} \ra \mathfrak{M}$, $\bar N(\bar m):= \bar m$ 
is the Poisson random measure defined on $(\bar{\mathfrak{M}},\bB( \bar{\mathfrak{M}}), \bar \PP)$ with 
intensity measure $ds \otimes \nu \otimes dr$. 

\no Since $\varphi_\e \in \mathfrak{U}^{M}_+$ yields that $\varphi_\e$ is bounded from below 
and above on a compact set in $[0,T] \times \RR^d \backslash \{0\}$ and $\varphi_\e=1$ outside 
of that compact,  we can use Girsanov's theorem in the form of  Lemma 2.3 in \cite{BDM11}.
Therefore the Doleans-Dade exponential of $\psi_\e$ with respect to $\bar N$ 
under $\bar \PP$ defined for any $t \in [0,T]$ by
  \begin{align*}
\mathfrak{E}(\psi_\e)(t) &:= \exp \Big (  \int_0^t \int_{\RR^d \backslash \{0\}} \int_0^{\frac{1}{\e}} \ln \psi_{\e} (s,z) \bar{N} (ds,dz,dr) 
  + \int_0^t \int_{\RR^d \backslash \{0\}} \int_0^{\frac{1}{\e}} ( - \psi_{\e} (s,z) + 1  )dr \nu(dz)ds \Big )
  \end{align*} is an $(\fF_t)_{t\in [0, T]}$ - martingale under $\bar \PP$.
In addition, the measure 
  \begin{align*}
 \mathcal{B}(\bar{\mathfrak{M}}) \ni G \mapsto \QQ ^\e_T(G) := \int_G \mathfrak{E} (\psi_\e) (T)  d \bar{\PP} 
 \end{align*}
is a probability measure on $(\bar{\mathfrak{M}}, \mathcal{B}(\bar{\mathfrak{M}}))$. Furthermore, the measures $\bar{\PP}$ and $\QQ^\e_T$ 
are mutually absolutely continuous and the controlled random measure $\e N^{\frac{1}{\e} \varphi_\e}$ under $\QQ^\e_T$ 
has the same law as $\e N^{\frac{1}{\e}}$ under $\bar{\PP}$ on $(\bar{\mathfrak{M}}, \bB(\bar{\mathfrak{M}}))$. 
For more details we refer the reader to Lemma 2.3 in \cite{BDM11} and further references given there. Denote by  $\tilde X^{\e,x}:= \mathcal{G}^{\e,x} (\e N^{\frac{\varphi_{\e}}{\e}} )$ the unique strong solution 
of the following controlled SDE
\begin{align} \label{chpt2: eq controlled sde}
\tilde X^{\e,x}_t = x + \int_0^t b(\tilde X_s^{\e,x})ds 
+ \int_0^t \int_{\RR^d \backslash \{0\}} G( \tilde X_{s-}^{\e,x})z \big (\e N^{\frac{\varphi_{\e}}{\e}} (ds,dz) - \nu(dz) ds \big ), \quad t \in [0,T].
\end{align}
\bigskip

\paragraph*{Technical estimates:} \label{subsection: preliminaries}

The following lemma is crucial in the proof of Theorem \ref{thm: LDP full process} and the reader 
can find its proof in the Subsection  \ref{sec: preliminaries LDP} of the appendix.
\begin{lemma}  \label{lemma: integrability controls} 
  Let $\nu \in \mathfrak{M}$ satisfy Hypothesis \ref{condition:generalized statement- the measure nu}.
  Then for any $M, T>0$ and $x \in \RR^d$ we have the following statements 
\begin{align} 
&\sup_{g \in \mathfrak{S}^M} \iint_{[0,T] \times \RR^d \backslash \{0\}} |z|^2 g(s,z) \nu(dz) ds <  \infty, 
\quad  \label{eq: integrability controls- first order} \\[3mm]
& \sup_{g \in \mathfrak{S}^M} \iint_{[0,T] \times \RR^d \backslash \{0\}} |z|| g(s,z)-1| \nu(dz) ds <  
\infty \quad \text{and}  \label{eq: integrability controls- first order1}   \\[3mm]
&\lim_{\delta \rightarrow 0} \sup_{g \in \mathfrak{S}^M}  \sup_{\substack{0 \lqq  t < t' \lqq T \\ |t-t'| \lqq \delta}} 
~\iint_{[t,t'] \times \RR^d \backslash \{0\}} |z| |g(s,z)-1| \nu(dz) ds= 0.\label{eq: integrability controls- limit zero}
\end{align}
\end{lemma}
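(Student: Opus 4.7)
The plan is to exploit the Fenchel--Young duality between the entropy integrand $\ell(u)=u\ln u-u+1$ and its convex conjugate $\ell^{*}(\theta)=e^{\theta}-1$, giving the scalar inequality $\theta u\lqq \ell(u)+e^{\theta}-1$ for every $\theta\in\RR$ and $u\gqq 0$. Applying this with $\pm\theta$ to $\theta(u-1)=\theta u-\theta$ and comparing the two resulting bounds yields the symmetrised form
\[
|\theta|\,|u-1|\lqq \ell(u)+\bigl(e^{|\theta|}-1-|\theta|\bigr),\qquad \theta\in\RR,\;u\gqq 0.
\]
Integrating such pointwise inequalities against $\nu(dz)\,ds$ with $u=g(s,z)$ converts the entropy constraint $\eE_T(g)\lqq M$ into the required a priori bounds, as soon as the dual integrand is $\nu$-integrable.

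For \eqref{eq: integrability controls- first order}, apply the original Young inequality with $\theta=\sigma|z|^{2}$ and $u=g(s,z)$, divide by $\sigma$, and integrate to obtain
\[
\iint |z|^{2}g(s,z)\,\nu(dz)\,ds \lqq \frac{M}{\sigma}+\frac{T}{\sigma}\int (e^{\sigma|z|^{2}}-1)\,\nu(dz),
\]
which is finite for any $\sigma\in(0,\Gamma]$: the tail is controlled by Hypothesis \ref{condition:generalized statement- the measure nu}.2 and the bound $e^{\sigma|z|^{2}}-1\lqq 2\sigma|z|^{2}$ on $\{|z|\lqq 1\}$ yields integrability near the origin from the L\'evy property. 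For \eqref{eq: integrability controls- first order1}, the symmetrised inequality with $\theta=\sigma|z|$ produces
\[
\sigma\iint |z|\,|g-1|\,\nu(dz)\,ds \lqq M+T\,\Psi(\sigma),\qquad \Psi(\sigma):=\int (e^{\sigma|z|}-1-\sigma|z|)\,\nu(dz),
\]
and $\Psi(\sigma)<\infty$ at any fixed $\sigma>0$ because $e^{\sigma|z|}-1-\sigma|z|=O(\sigma^{2}|z|^{2})$ near $0$, while the elementary estimate $\sigma|z|\lqq \sigma^{2}/(4\Gamma)+\Gamma|z|^{2}$ reduces the tail piece to Hypothesis \ref{condition:generalized statement- the measure nu}.2.

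For \eqref{eq: integrability controls- limit zero}, restricting the same symmetrised inequality to $[t,t']$ with $t'-t\lqq\delta$ gives, for any $\sigma>0$ and any $g\in\mathfrak{S}^{M}$,
\[
\int_{t}^{t'}\!\!\int |z|\,|g-1|\,\nu(dz)\,ds \lqq \frac{M}{\sigma}+\frac{\delta\,\Psi(\sigma)}{\sigma}.
\]
The idea is to let $\sigma=\sigma(\delta)\to\infty$ as $\delta\to 0$ so that both summands vanish. To bound $\Psi(\sigma)/\sigma$ I split at $|z|=1/\sigma$: the Taylor bound $e^{\sigma|z|}-1-\sigma|z|\lqq (e/2)\sigma^{2}|z|^{2}$ on $\{|z|\lqq 1/\sigma\}$, combined with the Young tail estimate on $\{|z|>1/\sigma\}$ (using a Chebyshev step $\nu(\{1/\sigma<|z|\lqq 1\})\lqq \sigma^{2}\int_{|z|\lqq 1}|z|^{2}\nu(dz)$), yields
\[
\frac{\Psi(\sigma)}{\sigma}\lqq C\,\sigma\!\int_{|z|\lqq 1/\sigma}\!|z|^{2}\,\nu(dz)+C\,\sigma\,e^{\sigma^{2}/(4\Gamma)}.
\]
Choosing $\sigma=c\sqrt{|\ln\delta|}$ with $c<2\sqrt{\Gamma}$ turns $\delta\sigma\,e^{\sigma^{2}/(4\Gamma)}$ into the polynomially vanishing expression $c\sqrt{|\ln\delta|}\,\delta^{1-c^{2}/(4\Gamma)}$, whereas Hypothesis \ref{condition:generalized statement- the measure nu}.3 rewrites as $\int_{|z|\lqq 1/\sigma}|z|^{2}\nu(dz)\lqq C/(\ln\sigma)^{p}$, so the first summand contributes $\delta\sigma\cdot C\sigma/(\ln\sigma)^{p}=O\!\bigl(\delta|\ln\delta|/(\ln|\ln\delta|)^{p}\bigr)\to 0$.

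The main obstacle is precisely this balancing act in \eqref{eq: integrability controls- limit zero}: the entropy penalty $M/\sigma$ compels $\sigma\to\infty$, yet the Gaussian tail of $\nu$ makes $\Psi(\sigma)$ blow up like $e^{\sigma^{2}/(4\Gamma)}$, and reconciling the two explosions against the small factor $\delta$ is possible only thanks to the quantitative non-degeneracy at the origin encoded by Hypothesis \ref{condition:generalized statement- the measure nu}.3; the first two parts are essentially immediate once the Fenchel--Young pair $(\ell,\ell^{*})$ is set up.
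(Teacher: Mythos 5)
Your proof is correct, and for \eqref{eq: integrability controls- first order1}--\eqref{eq: integrability controls- limit zero} it takes a genuinely different route from the paper. The paper keeps the Young parameter fixed and instead uses the dichotomy of Remark 3.3 in \cite{BCD13}: on $\{|g-1|\lqq \beta\}$ the bound $|g-1|^2\lqq c_2(\beta)\ell(g)$ plus Cauchy--Schwarz yields a factor $\sqrt{|t'-t|}$, while on $\{|g-1|>\beta\}$ one uses $|g-1|\lqq c_1(\beta)\ell(g)$ with $c_1(\beta)$ small for $\beta$ large together with $ab\lqq e^{\sigma a}+\tfrac{1}{\sigma}\ell(b)$; since $\beta$ and $\sigma$ are chosen before $\delta\to 0$, no exponential moment ever blows up, and the argument only uses $\int_{|z|>1}e^{\sigma|z|}\nu(dz)<\infty$. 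You instead apply a symmetrized Fenchel--Young inequality globally and let $\sigma=\sigma(\delta)=c\sqrt{|\ln\delta|}\to\infty$, beating the resulting Gaussian blow-up $e^{\sigma^2/(4\Gamma)}$ of $\Psi(\sigma)$ with the factor $\delta$; this is a clean quantitative argument, but it is tied to the specific Gaussian-type integrability of Hypothesis \ref{condition:generalized statement- the measure nu}.2, whereas the paper's splitting is somewhat more robust. Two corrections to your commentary rather than to the proof: the claim that \eqref{eq: integrability controls- limit zero} is ``possible only thanks to'' Hypothesis \ref{condition:generalized statement- the measure nu}.3 is inaccurate --- the paper's proof of this lemma never invokes E.3 (it is used later, e.g.\ in Lemma \ref{chpt2: lemma second lemma lower bound first exit time}), and even within your own argument the small-jump contribution is $O(\delta\sigma)$ using only $\int_{|z|\lqq 1}|z|^2\nu(dz)<\infty$ from E.1, since $\int_{|z|\lqq 1/\sigma}|z|^2\nu(dz)\lqq \int_{|z|\lqq 1}|z|^2\nu(dz)$, so the appeal to E.3 is superfluous (though harmless). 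Also, the elementary bound $e^{\sigma|z|^2}-1\lqq 2\sigma|z|^2$ on $\{|z|\lqq 1\}$ requires $\sigma$ small; replace it by $e^{\sigma|z|^2}-1\lqq \sigma e^{\sigma}|z|^2$ (or take $\sigma=\min\{\Gamma,1\}$), a purely cosmetic fix.
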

\noindent The process $\ti X^{\e, x}$ introduced in (\ref{chpt2: eq controlled sde}) has the following localization property used in the sequel.
 \begin{proposition} \label{proposition: priori estimate controlled process} 
 Let the hypotheses of Theorem \ref{thm: LDP full process} be satisfied. 
 Then for any $M>0$, any family $(\varphi_{\e})_{\e >0}$, $\varphi_\e \in \mathfrak{U}^{M}_+$, any function  
 $\rR: (0, 1] \ra (0, \infty)$ satisfying the limits $\lim_{\e\ra 0+}\rR(\e) = \infty$ and $\lim_{\e \ra 0+} \e \rR^2(\e) =0$, 
 $x\in \RR^d$ and $T>0$ we have the following. There exist constants $\e_0 \in (0, 1]$ and $C>0$ such that $\e \in (0, \e_0]$ implies 
\begin{align} \label{chpt2: eq a priori estimate controlled  process}
\bar \PP\big( \displaystyle \sup_{s\in [0, T]} |\tilde X_s^{\e,x}| >  \rR (\e) \big ) \lqq 2 e^{- \frac{1}{2} \rR(\e)} + C \e \rR(\e).
 \end{align} 
\end{proposition}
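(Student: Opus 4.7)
The plan is to decompose $\tilde X^{\e,x}_t = x + \int_0^t b(\tilde X^\e_s)\,ds + A^\e_t + M^\e_t$ into its drift, controlled-intensity shift $A^\e_t := \int_0^t\int G(\tilde X^\e_{s-})\,z\,(\varphi_\e(s,z)-1)\,\nu(dz)\,ds$, and purely discontinuous martingale $M^\e_t := \int_0^t\int G(\tilde X^\e_{s-})\,z\,\bigl(\e N^{\varphi_\e/\e}(ds,dz) - \varphi_\e(s,z)\,\nu(dz)\,ds\bigr)$, the latter having jumps bounded by $\e L |z|(1+|\tilde X^\e_{s-}|)$ and predictable quadratic variation $\langle M^\e\rangle_T = \e\int_0^T\!\int |G(\tilde X^\e_{s-})|^2|z|^2\varphi_\e\,\nu(dz)\,ds$. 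I would control each piece separately and glue them with a Gronwall-type estimate.

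For the deterministic skeleton, I would apply It\^o to $|\tilde X^\e_t|^2$. The coercivity of $b$ together with $b(0)=0$ gives the dissipative estimate $\langle \tilde X^\e_s, b(\tilde X^\e_s)\rangle \lqq -c_1|\tilde X^\e_s|^2$, which absorbs the pure drift. The contribution of $A^\e$ is bounded pathwise by $CL \int_0^t (1+|\tilde X^\e_s|)^2\,h_\e(s)\,ds$ with $h_\e(s):=\int |z|\,|\varphi_\e(s,z)-1|\,\nu(dz)$, whose integral over $[0,T]$ is finite uniformly over $\varphi_\e \in \mathfrak{U}^M_+$ by \eqref{eq: integrability controls- first order1}; likewise the compensator of $[M^\e,M^\e]_T$ is at most $O(\e)\cdot(1+\sup_{s\lqq T}|\tilde X^\e_s|^2)$ by \eqref{eq: integrability controls- first order}. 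A Gronwall argument then yields, on the event $\{\sup_{s\lqq T}|M^\e_s|\lqq \rho\}$, a deterministic bound $\sup_{s\lqq T}|\tilde X^\e_s| \lqq \Psi(|x|,\rho)$ with $\Psi$ independent of $\varphi_\e \in \mathfrak{U}^M_+$, provided $\e$ is small enough so that the $O(\e)$ quadratic-variation contribution can be absorbed.

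For the martingale, I would invoke the Bernstein-type inequality for purely discontinuous martingales from \cite{DZ01}; since the jumps of $M^\e$ are unbounded, I would first truncate in $z$ at a level $\rho_0(\e)$, splitting $M^\e = M^{\e,\mathrm{s}} + M^{\e,\mathrm{b}}$ according to $|z|\lqq \rho_0(\e)$ or $|z|>\rho_0(\e)$. With $\rho_0(\e)$ chosen so that $\e\,\rho_0(\e)$ is of unit order (e.g.\ $\rho_0(\e) \asymp \rR(\e)/\sqrt{\e}$), Bernstein applied to $M^{\e,\mathrm{s}}$---whose jumps are of order $\e\,\rho_0(\e)$ and whose quadratic variation is $O(\e)$---produces, at level $\rR(\e)$, a tail of the form $2e^{-\rR(\e)/2}$. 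The probability that $M^{\e,\mathrm{b}}$ carries any jump on $[0,T]$ is bounded by the expected number of such jumps $T\e^{-1}\int_{|z|>\rho_0(\e)} \varphi_\e\,\nu(dz)\,ds$; pairing the entropy bound $\mathcal{E}_T(\varphi_\e)\lqq M$ with the exponential moment condition in Hypothesis \ref{condition: the measure nu}.2 through an entropic (Orlicz-type) duality reduces this to a quantity of order $\e\rR(\e)$.

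The main obstacle, and the reason the Bernstein inequality of \cite{DZ01} is singled out in the introduction, is precisely this calibration: the truncation level $\rho_0(\e)$ must be chosen so that the exponential tail from $M^{\e,\mathrm{s}}$ and the large-jump exceedance probability from $M^{\e,\mathrm{b}}$ combine exactly into $2e^{-\rR(\e)/2}+C\e\rR(\e)$, while the state-dependent prefactor $1+|\tilde X^\e_{s-}|$ in the jump bound is absorbed through the Gronwall step under the localization $\{\sup|M^\e|\lqq \rR(\e)\}$. The growth conditions $\rR(\e)\ra \infty$ and $\e\,\rR^2(\e)\ra 0$, the exponential integrability of $\nu$, and the uniform entropic control on $\varphi_\e\in\mathfrak{U}^M_+$ provided by Lemma \ref{lemma: integrability controls} are exactly what makes the matching succeed, and the constant $\e_0$ is fixed by the smallness required to simultaneously close the Gronwall argument and keep the truncation in its effective regime.
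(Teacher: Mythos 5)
Your architecture (decompose into drift, control shift and martingale; Gronwall; Bernstein after a jump truncation; entropy/exponential-moment duality for the large jumps) is genuinely different from the paper's, but as calibrated it does not prove the stated bound, and the calibration cannot be repaired within your scheme. Concretely: (a) with your truncation level $\rho_0(\e)\asymp \rR(\e)/\sqrt{\e}$, the expected number of jumps with $|z|>\rho_0(\e)$ under the intensity $\frac{1}{\e}\varphi_\e\,\nu$ is of order $\frac{1}{\e\,\rho_0^2(\e)}\asymp \rR(\e)^{-2}$ — this is what either the Chebyshev bound $\ind\{|z|>\rho_0\}\lqq |z|^2/\rho_0^2$ combined with \eqref{eq: integrability controls- first order}, or the entropic duality with Hypothesis \ref{condition: the measure nu}.2, actually yields — and $\rR(\e)^{-2}$ is not $O(\e\rR(\e))$ (take $\rR(\e)=|\ln\e|$); (b) enlarging $\rho_0(\e)$ to $\asymp (\e\sqrt{\rR(\e)})^{-1}$, which is what an $O(\e\rR(\e))$ large-jump term would require, makes the truncated jump bound $L\e\rho_0(\e)\,(1+|\tilde X^{\e}_{s-}|)\asymp \sqrt{\rR(\e)}$ on any localization where the state is of size $\rR(\e)$, so the jump-bound version of Bernstein only gives a tail $e^{-c\sqrt{\rR(\e)}}$, not $2e^{-\rR(\e)/2}$. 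For $G$ of linear growth the two requirements ($\e\rho_0(\e)(1+\rR(\e))\lqq \mathrm{const}$ and $(\e^2\rho_0^2(\e)\rR(\e))^{-1}\lqq \mathrm{const}$) force $\rR(\e)\lqq \mathrm{const}$, contradicting $\rR(\e)\ra\infty$. In addition, "absorbing" the state-dependent factor "on the event $\{\sup_t|M^\e_t|\lqq\rR(\e)\}$" is not a legitimate localization for Bernstein: that event is not of stopping-time type; you need the exit time $\tilde\tau^\e_{\rR(\e)}$ of \eqref{eq: the stopping time}.

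The paper's proof avoids the truncation issue altogether and is much shorter: it stops $\tilde X^{\e,x}$ at $\tilde\tau^\e_{\rR(\e)}$ and applies the inequality of \cite{DZ01} in the form conditional on the optional quadratic variation, $\bar\PP\big(\sup_{t\lqq \tilde\tau^\e_{\rR(\e)}\wedge T}|\tilde X^\e_t|>\rR(\e),\,[\tilde X^\e]_{\tilde\tau^\e_{\rR(\e)}\wedge T}\lqq\lambda\big)\lqq 2\exp\!\big(-\tfrac12\rR^2(\e)/\lambda\big)$, so no bound on individual jumps is needed (they are already inside $[\,\cdot\,]$). On the stopped interval the quadratic variation is dominated by $2L^2\e^2(1+\rR^2(\e))\int_0^T\!\int |z|^2 N^{\varphi_\e/\e}(ds,dz)$, whose expectation is $2L^2\e(1+\rR^2(\e))\bar C$ with $\bar C$ the uniform bound \eqref{eq: integrability controls- first order}; Chebyshev with the choice $\lambda=\rR(\e)$ then produces exactly the two terms $2e^{-\rR(\e)/2}$ and $C\e\rR(\e)$ — no Gronwall step, no large-jump count and no entropic duality are needed. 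If you want to keep your decomposition, the fix is to replace the truncation-plus-jump-bound Bernstein by this conditional-on-$[\,\cdot\,]$ version applied to the stopped martingale part; otherwise the estimate you obtain is strictly weaker than \eqref{chpt2: eq a priori estimate controlled  process}.
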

\no The proof uses an exponential Bernstein-type inequality for martingales from \cite{DZ01} 
and given in Subsection  \ref{sec: preliminaries LDP} of the appendix. 

\no For any $\varepsilon>0$ and $\rR:(0,1] \longrightarrow (0,\infty)$ satisfying the limits $\rR(\varepsilon) \ra \infty$ 
and $\varepsilon \rR^2(\varepsilon) \ra 0$ as $\varepsilon \ra 0$, such as in the statement of 
Proposition \ref{proposition: priori estimate controlled process}, let us define the $(\fF_t)_{t \in [0,T]}$-stopping time
\begin{align} \label{eq: the stopping time}
\tilde \tau^\varepsilon_{\rR(\varepsilon)} := \inf \{ t \gqq 0  ~|~ \tilde X^{\varepsilon,x}_t \notin B_{\rR(\varepsilon)}(0)\} \wedge T.
\end{align}
\begin{proposition} \label{proposition: a priori bound}
For any $\varepsilon>0$, $x \in \RR^d$ and $\rR:(0,1] \longrightarrow (0,\infty)$ given as 
in Proposition \ref{proposition: priori estimate controlled process} the following holds. 
There exists $\Lambda:=\Lambda(L,M,T,x)>0$ and $\varepsilon_1>0$ such that
\begin{align} \label{eq: a priori bound controlled processes}
\displaystyle \sup_{\varepsilon \in (0, \varepsilon_1]} \bar \EE \Big [ \displaystyle \sup_{ t \in [0, \tilde \tau^\varepsilon_{\rR(\varepsilon)}]} |\tilde X^{\varepsilon,x}_t|^2 \Big ] \lqq \Lambda.
\end{align}
\end{proposition}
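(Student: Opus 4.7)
My starting point is to apply It\^o's formula to $t \mapsto |\tilde X^{\e,x}_t|^2$ for the controlled SDE (15). Rewriting the perturbation as $\e N^{\varphi_\e/\e} - \nu = \e \tilde N^{\varphi_\e/\e} + (\varphi_\e - 1)\nu$ and expanding the jump correction, I would obtain the identity
\begin{equation*}
|\tilde X^{\e,x}_t|^2 = |x|^2 + 2\int_0^t \langle \tilde X^{\e,x}_s, b(\tilde X^{\e,x}_s)\rangle ds
+ 2\int_0^t\!\!\int \langle \tilde X^{\e,x}_s, G(\tilde X^{\e,x}_s)z\rangle (\varphi_\e - 1)\nu(dz)ds
+ \e\int_0^t\!\!\int |G(\tilde X^{\e,x}_s)|^2|z|^2 \varphi_\e \nu(dz) ds + M_t,
\end{equation*}
with a purely discontinuous local martingale $M_t$ whose jump integrand is $|\tilde X_{s-} + \e G(\tilde X_{s-})z|^2 - |\tilde X_{s-}|^2$ against $\tilde N^{\varphi_\e/\e}$. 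Hypothesis A.1 together with $b(0)=0$ yields $\langle \tilde X_s, b(\tilde X_s)\rangle \lqq -c_1|\tilde X_s|^2 \lqq 0$, so this term is simply dropped.

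Next, using Hypothesis \ref{condition: on the multiplicative coefficient} in the form $|G(y)| \lqq L(1+|y|)$, together with Young's elementary bounds $|y|(1+|y|) \lqq \frac{1}{2} + \frac{3}{2}|y|^2$ and $(1+|y|)^2 \lqq 2(1+|y|^2)$, the two remaining integral terms are controlled by $L(1+3|\tilde X_s|^2)h^\e(s)$ and $2L^2\e (1+|\tilde X_s|^2) k^\e(s)$ respectively, where $h^\e(s):=\int|z||\varphi_\e(s,z)-1|\nu(dz)$ and $k^\e(s):=\int|z|^2 \varphi_\e(s,z)\nu(dz)$. Since $\varphi_\e \in \mathfrak U^M_+$ entails $\varphi_\e(\cdot,\cdot,\bar m) \in \mathfrak S^M$ $\bar\PP$-a.s., Lemma \ref{lemma: integrability controls} applies pathwise and yields deterministic uniform constants $C_1(M), C_2(M)$ with $\int_0^T h^\e(s) ds \lqq C_1$ and $\int_0^T k^\e(s) ds \lqq C_2$. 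Stopping at $\tilde\tau^\e_{\rR(\e)}$, taking $\sup_{u \lqq t \wedge \tilde\tau^\e_{\rR(\e)}}$ and using monotonicity of the non-martingale terms, a pathwise Gronwall argument with time-varying coefficient $3Lh^\e(s) + 2L^2\e k^\e(s)$ gives
\begin{equation*}
\sup_{u \lqq t \wedge \tilde\tau^\e_{\rR(\e)}} |\tilde X^{\e,x}_u|^2 \;\lqq\; \Big[|x|^2 + LC_1 + 2L^2\e C_2 + \sup_{u \lqq t \wedge \tilde\tau^\e_{\rR(\e)}} |M_u|\Big]\; \exp\!\big(3LC_1 + 2L^2 \e\, C_2\big),
\end{equation*}
and the prefactors are bounded uniformly in $\e\in(0,\e_1]$ for some $\e_1>0$.

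It remains to control $\bar\EE[\sup_{u \lqq t\wedge\tilde\tau^\e}|M_u|]$. The jumps of $M$ have size at most $2\e|\tilde X_{s-}||G(\tilde X_{s-})||z| + \e^2|G(\tilde X_{s-})|^2|z|^2$; on the stopped interval $|\tilde X_{s-}|\lqq \rR(\e)$, so applying Burkholder--Davis--Gundy (or the Bernstein-type inequality from \cite{DZ01} already exploited in Proposition \ref{proposition: priori estimate controlled process}) to the predictable quadratic variation, I would bound it by quantities of the form $\e \rR(\e)^2(1+\rR(\e)^2)\!\int_0^T\! k^\e ds$ plus a higher-order term. The first factor stays bounded by assumption on $\rR$, while the $\int k^\e$ factor is uniformly bounded by Lemma \ref{lemma: integrability controls}. \emph{Main obstacle.} The genuinely delicate step is the $|z|^4$-type moment appearing in the predictable bracket of the quadratic jump correction, which is not directly covered by Lemma \ref{lemma: integrability controls}. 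I would address it by splitting the jump domain at scale $|z|\lqq \e^{-1/2}$: on $\{|z|\lqq \e^{-1/2}\}$, $|z|^4 \lqq \e^{-1}|z|^2$ so one recovers the $k^\e$-bound after absorbing an $\e$-factor; on the complementary tail, the exponential integrability of Hypothesis \ref{condition: the measure nu}.2 together with an entropy-Young inequality applied to $\fS^M$ (giving $\int_0^T\int \alpha|z|^2 \varphi_\e \nu ds \lqq M + T \int e^{\alpha|z|^2}\nu$ for small $\alpha$) controls the contribution. Combining everything and absorbing the $\bar\EE[\sup |M|]$ term (or its square root, via Young) into the left-hand side produces the desired constant $\Lambda = \Lambda(L, M, T, x)$ independent of $\e$.
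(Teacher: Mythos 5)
Your proposal is correct and essentially coincides with the paper's own proof in the appendix: It\^o's formula for $|\tilde X^{\e,x}_t|^2$, discarding the dissipative drift, bounding the two compensator terms via the linear growth of $G$ together with the uniform bounds of Lemma \ref{lemma: integrability controls}, localizing on $[0,\tilde\tau^{\e}_{\rR(\e)}]$, applying Gronwall, and controlling the residual martingale by a Burkholder--Davis--Gundy/Bernstein estimate. The only substantive difference is that you make the $|z|^4$-moment in the bracket explicit (splitting at $|z|\lqq \e^{-1/2}$ plus entropy--Young), a point the paper's displayed BDG--Jensen computation glosses over; just be aware that $\e\,\rR^2(\e)\big(1+\rR^2(\e)\big)$ is \emph{not} bounded merely because $\e\rR^2(\e)\to 0$ (take $\rR(\e)=\e^{-1/2}|\ln\e|^{-1}$), so for arbitrary admissible $\rR$ it is really the Young-type absorption of $\bar\EE\big[\sup_{t\lqq\tilde\tau^{\e}_{\rR(\e)}}|\tilde X^{\e,x}_t|^2\big]$ that you mention at the end (or an extra factor of $\e$ retained from the compensator, as in the paper's estimate of $M^\e_1$) which closes that step — the paper's ``analogously'' for $M^\e_2$ is equally cavalier on this point.
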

\no The preceding estimate is proved in Subsection  \ref{sec: preliminaries LDP} of the appendix.

\bigskip

\subsection{Proof of Theorem \ref{thm: LDP full process}}

The following proposition is a continuity statement of the map $\gG^{0,x}$ for any $x \in \RR^d$.
\begin{proposition} \label{proposition: first condition LDP big jumps}
For every $x \in \RR^d$, $M \gqq 0$ and $n \in \NN$ let $g_n, g \in \mathfrak{S}^M$ 
such that $\nu_T^{g_n} \ra \nu_T^g$ in the vague topology of $\mathfrak{S}$ as $n \ra \infty$. Then
there exists a subsequence $(g_{n_k})_{k \in \NN} \subset (g_n)_{n \in \NN}$ such that 
$$\mathcal{G}^{0,x} (g_{n_k}) \rightarrow \mathcal{G}^{0,x} (g), \quad \text{as } 
k \ra \infty $$ in the uniform topology of $C([0,T], \RR^d)$.
 \end{proposition}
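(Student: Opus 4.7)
The strategy is to combine an Arzel\`a-Ascoli compactness argument with a passage to the limit in the controlled integral equation~(\ref{eq: controlled ODE}), identifying the subsequential limit as $\mathcal{G}^{0,x}(g)$ via uniqueness. Fix $M>0$ with $g_n,g\in\mathfrak{S}^M$. By the uniform bound~(\ref{eq: uniform bound controlled odes}), the family $(U^{g_n}(\,\cdot\,;x))_n$ takes values in a fixed compact $K\subset\RR^d$, on which $b$ and $G$ are bounded by some $C_K>0$. From~(\ref{eq: controlled ODE}) and Hypothesis~\ref{condition: on the multiplicative coefficient},
\begin{align*}
|U^{g_n}(t';x)-U^{g_n}(t;x)|\lqq C_K(t'-t)+C_K\int_t^{t'}\!\!\int_{\RR^d\backslash\{0\}}|z|\,|g_n(s,z)-1|\,\nu(dz)\,ds,
\end{align*}
whose right-hand side vanishes as $|t'-t|\ra 0$ uniformly in $n$ by Lemma~\ref{lemma: integrability controls}~(\ref{eq: integrability controls- limit zero}). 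Combined with uniform boundedness, Arzel\`a-Ascoli then provides a subsequence $(g_{n_k})$ and a limit $\Phi\in C([0,T],\RR^d)$ with $U^{g_{n_k}}(\,\cdot\,;x)\ra\Phi$ uniformly on $[0,T]$.

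It then remains to show that $\Phi$ satisfies~(\ref{eq: controlled ODE}) with control $g$. The drift contribution $\int_0^t b(U^{g_{n_k}}(s;x))\,ds\ra\int_0^t b(\Phi(s))\,ds$ uniformly in $t$ by continuity of $b$ on $K$. For the nonlocal term I split
\begin{align*}
\int_0^t\!\!\int_{\RR^d\backslash\{0\}} G(U^{g_{n_k}}(s;x))z(g_{n_k}(s,z)-1)\nu(dz)\,ds=A_k(t)+B_k(t),
\end{align*}
where $A_k$ pairs $[G(U^{g_{n_k}})-G(\Phi)]$ with $(g_{n_k}-1)$ and $B_k$ pairs the fixed integrand $G(\Phi(s))z$ with $(g_{n_k}-1)$. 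Using the Lipschitz property of $G$ and Lemma~\ref{lemma: integrability controls}~(\ref{eq: integrability controls- first order1}),
\begin{align*}
\sup_{t\in[0,T]}|A_k(t)|\lqq L\,\|U^{g_{n_k}}-\Phi\|_\infty\cdot\sup_{g\in\mathfrak{S}^M}\int_0^T\!\!\int_{\RR^d\backslash\{0\}}|z||g-1|\,\nu(dz)\,ds\ra 0.
\end{align*}

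The convergence of $B_k(t)$ to $\int_0^t\!\int G(\Phi(s))z(g-1)\nu(dz)\,ds$ is the core of the argument. I approximate $\ind_{[0,t]\times\{\delta\lqq|z|\lqq R\}}$ by compactly supported continuous cutoffs in $(s,z)$, so that on this truncated region the vague convergence $\nu_T^{g_{n_k}}\ra\nu_T^g$ applied to the bounded jointly continuous test function $G(\Phi(s))z$ gives the desired limit for each fixed $\delta,R$. The large-jump tail is controlled uniformly in $g\in\mathfrak{S}^M$ via $|z|\lqq R^{-1}|z|^2$ for $|z|>R$ together with~(\ref{eq: integrability controls- first order}); the small-jump tail is handled via the Fenchel-Young entropy inequality $g\lqq(g\ln g-g+1)+(e-1)$, yielding
\begin{align*}
\int_0^T\!\!\int_{|z|<\delta}|z|g(s,z)\nu(dz)\,ds\lqq\delta\bigl(M+(e-1)\,T\,\nu(B_\delta(0))\bigr)\ra 0
\end{align*}
uniformly in $g\in\mathfrak{S}^M$ as $\delta\ra 0$, using the non-atomicity of $\nu$. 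Sending first $k\ra\infty$ and then $R\ra\infty$, $\delta\ra 0$ concludes $B_k(t)\ra\int_0^t\!\int G(\Phi(s))z(g-1)\nu(dz)\,ds$ uniformly in $t\in[0,T]$. Consequently $\Phi$ solves~(\ref{eq: controlled ODE}) with control $g$, and uniqueness forces $\Phi=\mathcal{G}^{0,x}(g)$.

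The main obstacle is precisely the passage to the limit in $B_k$: vague convergence only tests against continuous, compactly supported functions on $[0,T]\times\RR^d\backslash\{0\}$, whereas the natural integrand $G(\Phi(s))z$ is neither bounded for large $|z|$ nor compactly supported near $z=0$. The simultaneous control of both the small-jump singularity (through the entropy/Fenchel-Young device) and the large-jump tail (through the a priori second-moment bound~(\ref{eq: integrability controls- first order})), \emph{uniformly} in $g\in\mathfrak{S}^M$, is exactly where the integrability conditions of Hypothesis~\ref{condition: the measure nu} enter decisively.
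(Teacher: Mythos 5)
Your argument is correct and shares the paper's skeleton: uniform boundedness from (\ref{eq: uniform bound controlled odes}), equicontinuity from Lemma \ref{lemma: integrability controls} via (\ref{eq: integrability controls- limit zero}), Arzel\`a--Ascoli to extract a uniformly convergent subsequence, identification of the limit as a solution of (\ref{eq: controlled ODE}) with control $g$, and conclusion by uniqueness. Where you genuinely diverge is the identification step. The paper disposes of it in one line, invoking the continuity of $b$ and $G$, the bound (\ref{eq: integrability controls- first order}) and ``dominated convergence''; this is terse, since only the vague convergence $\nu_T^{g_n}\ra\nu_T^g$ is assumed and the $g_n$ need not converge pointwise, so a classical dominated convergence argument does not literally apply to the nonlocal term. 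You instead split the error into $A_k$ (coefficient mismatch, killed by the Lipschitz property of $G$ together with (\ref{eq: integrability controls- first order1}) and the uniform convergence $U^{g_{n_k}}\ra\Phi$) and $B_k$ (a linear functional of $\nu_T^{g_{n_k}}-\nu_T^{g}$), and you handle $B_k$ by testing the vague convergence against continuous, compactly supported truncations of $\ind_{[0,t]}(s)G(\Phi(s))z$, with the large-jump tail controlled uniformly in $g\in\mathfrak{S}^M$ by $|z|\lqq R^{-1}|z|^2$ and (\ref{eq: integrability controls- first order}), and the small-jump region by the Fenchel--Young bound $g\lqq (g\ln g-g+1)+e-1$, which under the finite-intensity Hypothesis \ref{condition: the measure nu} gives your $\delta\big(M+(e-1)T\nu(B_\delta(0))\big)$ rate. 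This buys a fully rigorous limit passage at exactly the point the paper glosses over, at the price of a longer argument and of using the finiteness of $\nu$ near the origin (harmless here; under the general Hypothesis \ref{condition:generalized statement- the measure nu} one would instead control the small jumps through a second-moment bound as in (\ref{eq: integrability controls- first order})). The remaining minor looseness in your write-up (mollifying the time indicator $\ind_{[0,t]}$, and the claim of uniformity in $t$) is easily repaired via (\ref{eq: integrability controls- limit zero}), and in any case pointwise convergence in $t$ suffices to identify the equation.
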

 \begin{proof}
\no For convenience we drop the dependence on the parameter $x \in \RR^d$ in what follows.
We set $U_n:= U^{g_n} = \mathcal{G}^0 (g_n)$.  
Estimate \eqref{eq: uniform bound controlled odes} 
yields a constant $K >0$ such that 
\begin{align} \label{chpt2: eq ldp thm bound deterministic controlled process}
\displaystyle \sup_{n \in \NN} \displaystyle \sup_{t \in [0, T]} |U_n(t;x)| \lqq K . 
\end{align}
Due to  \eqref{eq: integrability controls- limit zero} it follows
  \begin{align*}
 \displaystyle \lim_{\delta \rightarrow 0} \displaystyle  \sup_{n \in \NN} \displaystyle \sup_{|t-s| \lqq \delta} |U_n(t;x) - U_n(s;x)| = 0,
  \end{align*}
 which implies that $(U_n)_{n \in \NN}$ is a family of equicontinuous 
  uniformly bounded functions in $C([0,T], \mathbb{R}^d)$. 
The Arzel\`{a}-Ascoli compactness theorem yields a limit 
in the uniform topology $U \in C([0,T], \RR^d)$ 
for some subsequence $(g_{n_k})_{k \in \NN} \subset (g_n)_{n \in \NN}$. By the uniform 
estimate \eqref{chpt2: eq ldp thm bound deterministic controlled process}, the continuity of the functions $b$ and $G$ 
and \eqref{eq: integrability controls- first order} dominated convergence yields 
\begin{align}\label{chpt2: eq ldp  limit eq}
U(t;x) = x + \int_0^t b(U(s;x)) \mathrm{d}s 
+ \int_0^t \int_{\RR^d \backslash \{0\}} G(U(s;x)) (g(s,z)-1) z \nu(\mathrm{d}z) \mathrm{d}s \quad \text{for all } t \in [0,T].
\end{align}
The uniqueness of solution of \eqref{eq: controlled ODE} implies that $U = U^g = \gG^0(g)$.
\end{proof}
\no For $\varepsilon>0$, $x \in \RR^d$, $M \gqq 0$ and $\varphi \in \mathfrak{U}_{+}^{M}$ the 
following result is a weak law of large numbers type of statement for the measurable maps $\gG^{\varepsilon,x}$ 
under the action of the controlled random measures $\varepsilon \tilde N^{\frac{\varphi}{\varepsilon}}$.
 \begin{proposition} \label{proposition: second part ldp big jumps process}
 Given $M  \gqq 0$ let $\varphi \in \mathfrak{U}_+^M$ and $(\varphi_{\e})_{\varepsilon>0} \subset \mathfrak{U}_{+}^M$ 
 such that $\varphi_{\e} \Rightarrow \varphi$ in law as $\e \ra 0$. 
 Then for all $x\in \RR^d$  $\mathcal{G}^{0, x} (g)$ is a limit point in law  of 
 $\mathcal{G}^{\e, x}(\e N^{\frac{\varphi_{\e}}{\e} })$ in $\DD([0,T], \RR^d)$.
 \end{proposition}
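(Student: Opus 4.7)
The plan is to combine the a-priori localization bounds from Propositions \ref{proposition: priori estimate controlled process} and \ref{proposition: a priori bound} with the standard weak convergence scheme of \cite{BDM11, BCD13}. I would decompose the controlled SDE (\ref{chpt2: eq controlled sde}) into drift, compensator and martingale parts as $\tilde X^{\e,x}_t = x + \int_0^t b(\tilde X^{\e,x}_s)\,ds + A^\e_t + M^\e_t$ with
\begin{align*}
A^\e_t := \int_0^t \int_{\RR^d \backslash \{0\}} G(\tilde X^{\e,x}_s)\, z\, (\varphi_\e(s,z)-1)\, \nu(dz)\, ds,
\end{align*}
and $M^\e_t$ the stochastic integral of $G(\tilde X^{\e,x}_{s-})z$ against the compensated measure $\e N^{\varphi_\e/\e} - \varphi_\e\, \nu \otimes ds$. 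The proof then reduces to three standard steps: (i) show $M^\e \ra 0$ uniformly on $[0,T]$ in probability; (ii) establish tightness of $(\tilde X^{\e,x})_{\e>0}$ in $\DD([0,T],\RR^d)$; and (iii) identify every subsequential limit with $\gG^{0,x}(\varphi)$.

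For step (i), the It\^o isometry for Poisson stochastic integrals combined with localization at $\tilde \tau^\e_{\rR(\e)}$ yields
\begin{align*}
\bar \EE\Big[\sup_{t \lqq \tilde \tau^\e_{\rR(\e)}} |M^\e_t|^2\Big] \lqq C \e\, \bar \EE\Big[\int_0^{\tilde \tau^\e_{\rR(\e)}} \int_{\RR^d \backslash \{0\}} |G(\tilde X^{\e,x}_s)|^2 |z|^2 \varphi_\e(s,z)\, \nu(dz)\, ds\Big].
\end{align*}
By Hypothesis \ref{condition: on the multiplicative coefficient} and Proposition \ref{proposition: a priori bound} the factor $|G(\tilde X^{\e,x}_s)|$ is uniformly bounded on the localized interval, while Lemma \ref{lemma: integrability controls}, in the form of (\ref{eq: integrability controls- first order}), bounds the remaining $|z|^2 \varphi_\e$ integral uniformly over $\varphi_\e \in \mathfrak{U}_+^M$. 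Hence the right-hand side is $O(\e)$, and together with Proposition \ref{proposition: priori estimate controlled process} (so that $\tilde \tau^\e_{\rR(\e)} = T$ with probability tending to $1$) we obtain $\sup_{t \in [0,T]}|M^\e_t| \ra 0$ in probability. For step (ii), Aldous's criterion reduces tightness of $(\tilde X^{\e,x})_{\e>0}$ to equicontinuity of its drift and compensator parts: $b$ is Lipschitz on bounded sets, and the increments of $A^\e$ inherit the uniform modulus of continuity from (\ref{eq: integrability controls- limit zero}) after localization.

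For step (iii), I would extract a subsequence along which $(\tilde X^{\e_k,x}, \nu_T^{\varphi_{\e_k}}) \Rightarrow (\tilde X, \nu_T^\varphi)$ jointly, with the second marginal taking values in the set $\mathfrak{S}^M$ which is compact under vague convergence (Lemma 5.1 of \cite{BCD13}), and apply the Skorokhod representation theorem to realize this convergence $\bar \PP$-almost surely on a common probability space. Using continuity of $b$ and $G$, the uniform bound (\ref{eq: integrability controls- first order}), and dominated convergence in the same spirit as the proof of Proposition \ref{proposition: first condition LDP big jumps}, the limit $\tilde X$ satisfies the controlled integral equation (\ref{eq: controlled ODE}) with control $\varphi$, and uniqueness of solutions then forces $\tilde X = \gG^{0,x}(\varphi)$.

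The main obstacle is the passage to the limit in step (iii) inside the nonlinear compensator term $\int G(\tilde X^{\e_k,x}_s)\, z\, (\varphi_{\e_k}(s,z)-1)\, \nu(dz)\, ds$: the controls $\varphi_{\e_k}$ converge only in the vague sense of measures on $[0,T] \times (\RR^d \backslash \{0\})$, whereas $\tilde X^{\e_k,x}$ converges only in Skorokhod's $J_1$ topology and may oscillate across jumps of the limit. Reconciling the two requires the uniform $|z|$-integrability (\ref{eq: integrability controls- first order1}) together with the $L^\infty$-control on $G(\tilde X^{\e_k,x}_s)$ produced by localization, so that the integrand can be approximated by bounded continuous functions of $(s,z)$ against which vague convergence of $\nu_T^{\varphi_{\e_k}}$ is directly applicable, with the set of discontinuities of $\tilde X$ absorbed since it has Lebesgue measure zero in $[0,T]$.
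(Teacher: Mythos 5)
Your proposal is correct and follows essentially the same route as the paper: decompose $\tilde X^{\e,x}$ into drift/compensator and martingale parts, kill the martingale via its quadratic variation under the localization of Propositions \ref{proposition: priori estimate controlled process} and \ref{proposition: a priori bound} together with the uniform bound \eqref{eq: integrability controls- first order}, obtain (C-)tightness from \eqref{eq: integrability controls- limit zero}, and identify the limit with $\gG^{0,x}(\varphi)$ via Skorokhod representation and uniqueness of solutions of \eqref{eq: controlled ODE}. The only cosmetic differences are that the paper proves C-tightness by exhibiting explicit relatively compact sets and invoking Kallianpur--Xiong rather than Aldous's criterion, and controls the martingale through $[\bar M^\e]_T\ra 0$ in probability instead of a second-moment estimate.
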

 
 \begin{proof}
 We drop the dependence on $x$ of $\ti X^{\e, x}$ and  
for every $\e\in (0, 1]$ and $t \in [0,T]$ we define  
\begin{align} \label{eq: the J and M}
 J_t^\e &:= \int_0^t  b(\tilde X^{\e}_s) ds + \int_0^t \int_{\RR^d \backslash \{0\}}  G(\tilde X^{\e}_{s}) (\varphi_{\e}(s,z) -1) z \nu(dz) ds, \nonumber \\
 \bar M_t^\e &:= \e \int_0^t \int_{\RR^d \backslash \{0\}} G(\tilde X^{\e}_{s-}) z \tilde{N}^{\frac{\varphi_{\e}}{\e}} (ds,dz).
 \end{align}
\paragraph{Step 1.}
We start by showing that the family of processes $(J^\e)_{\e\in (0, 1]}$ is C-tight 
(cf. \cite{Jacod Shiryaev}-Definition VI.3.25).  Let $\rho>0$. 
For every $\varepsilon>0$ let $(\rR(\varepsilon))_{\varepsilon>0}$ be given as in the statement of Proposition \ref{proposition: priori estimate controlled process}. 
Proposition \ref{proposition: priori estimate controlled process} yields $\varepsilon_0>0$ 
such that  $\varepsilon< \varepsilon_0$ implies 
\begin{align} \label{eq: LDP for Ctightness-1}
\bar \PP \Big ( \displaystyle \sup_{t \in [0, T]} |\tilde X^\varepsilon_t| > \rR(\varepsilon)\Big ) < \frac{\rho}{2}.
\end{align}
On the event $\{ \tilde \tau^\varepsilon_{\rR(\varepsilon)} >T\}$ there exists $C_1>0$ such that for any $s \lqq t \lqq T$ we have
\begin{align} \label{eq: LDP for Ctightness-2}
|J^\varepsilon_t - J^\varepsilon_s| \lqq  C_1\Big  (1+ \displaystyle \sup_{u \in [0, \tilde \tau^\varepsilon_{\rR(\varepsilon)}]} |\tilde X^\varepsilon_u| \Big  ) \Big ( (t-s) + \int_s^t \int_{\RR^d \backslash \{0\}} |z||\varphi^\varepsilon(u,z)-1| \nu(dz)du \Big ).
\end{align}
By Proposition \ref{proposition: a priori bound} let $\Lambda>0$ and $\varepsilon_1>0$ such that $\varepsilon < \varepsilon_1$ implies
\begin{align} \label{eq: LDP for Ctightness-3}
\bar \EE \Big [ \displaystyle \sup_{t \in [0,  \tilde \tau^\varepsilon_{\rR(\varepsilon)}]} |\tilde X^\varepsilon_t|^2 \Big ] \lqq \Lambda < \infty.
\end{align}
Estimate (\ref{eq: integrability controls- limit zero}) in Lemma \ref{lemma: integrability controls}  yields some $\delta_1>0$ such that for any $s \lqq t \lqq T$ with $|t-s| \lqq \delta_1$  it follows
\begin{align*}
 \displaystyle \sup_{g \in \mathfrak{S}^M} \int_s^t \int_{\RR^d \backslash \{0\}} |z||g(u,z)-1| \nu(dz) du < \frac{\rho^2}{4(C_1+ \Lambda)}.
\end{align*} 
Fix $\delta = \delta_\rho < \min \{\frac{\rho^2}{4 (C_1+ \Lambda)}, \delta_1 \}$. 
Due to (\ref{eq: LDP for Ctightness-1}), (\ref{eq: LDP for Ctightness-2}) and (\ref{eq: LDP for Ctightness-3}) 
$\varepsilon< \varepsilon_0 \wedge \varepsilon_1$ and Markov's inequality yield 
\begin{align} \label{eq: LDP for Ctightness-4}
 \bar \PP \Big (  |J^\varepsilon_t - J^\varepsilon_s|> \rho\Big ) & \lqq  \bar \PP \Big ( |J^\varepsilon_t - J^\varepsilon_s|> \rho; \tilde \tau^\varepsilon_{\rR(\varepsilon)} >T \Big ) + \bar \PP \Big ( \displaystyle \sup_{t\in [0, T]} |\tilde X^\varepsilon_t|> \rR(\varepsilon )\Big ) \nonumber \\
&\lqq \frac{1}{\rho} \bar \EE \Big [ \displaystyle \sup_{|t-s|< \delta} |J^\varepsilon_t - J^\varepsilon_s| \Big ] + \frac{\rho}{2}  \lqq  \rho.
\end{align} 
For $\rho>0$ and $\varepsilon< \varepsilon_0 \wedge \varepsilon_1$ we define the set 
\begin{align} \label{chpt2: eq Ktau for tightness}
K_{\rho} &:= \bigcap_{m \in \NN} K_{\rho,m}, \quad \text{where} \nonumber \\
K_{\rho,m}&:= \Big \{   f \in C([0,T], \RR^d)~\big|~ f(0)=0 \mbox{ and for all }s, t\in [0, T] \nonumber \\
& \quad \quad \quad |t-s|\lqq \delta_{\rho 2^{-m}} \mbox{ implies } |f(t)- f(s)| < \rho 2^{- m}\Big \} .
 \end{align}
 For every $m \in \NN$ the set $K_{\rho,m}$ is a non-empty collection of 
 equicontinuous and uniformly pointwise  bounded elements of $C([0,T];\RR^d)$. 
 Hence due to Arzela-Ascoli's theorem the set $K_{\rho,m}$ is relatively compact 
 in $C([0,T];\RR^d)$ for every $m \in \NN$. Since the non-empty intersection of relatively 
 compact sets in a metric space is relatively compact  we conclude that the set $K_\rho$ is a 
 non-empty relatively compact set in $C([0,T];\RR^d)$. Due to (\ref{eq: LDP for Ctightness-4}) it follows that
\begin{align*}
\bar \PP ( J^\e \notin K_{\rho} )  \lqq \rho \sum_{m=1}^{\infty} 2^{-m}  = \rho,
\end{align*}
which implies that $(J^\e)_{\e\in (0, 1]}$ is $C$-tight. 
\paragraph*{Step 2.}
We show that the family of processes $(\bar M^\e)_{\e>0}$ is C-tight as $\varepsilon \ra 0$. 
We fix the scale $(\rR(\e))_{\varepsilon>0}$, the constants $ \e_0, C>0$ given in 
Proposition \ref{proposition: priori estimate controlled process}  and 
\begin{align*}
C_2:= \displaystyle \sup_{g \in \mathfrak{S}^M} \int_0^T \int_{\RR^d \backslash \{0\}} |z|^2 g(s,z)\nu(dz) ds.
\end{align*}
Recall that $C_2 < \infty$ by (\ref{eq: integrability controls- first order}) in 
Lemma \ref{lemma: integrability controls}. Hence  for every $\kappa>0$ and any $\varepsilon< \varepsilon_0$ it follows
\begin{align*} \label{chpt2: eq para M no ldp} 
&\bar \PP \Big ([ \bar M^\e  ]_T > \kappa \Big ) 
\lqq \bar \PP \Big ( [ \bar M^\e  ]_T > \kappa, \sup_{s\in [0, T]} |\ti X^\e_s| \lqq \rR(\e) \Big ) + \bar \PP \Big (\sup_{s\in [0, T]} |\ti X^\e_s| > \rR(\e) \Big )\\
&\quad = \bar \PP \Big ( \e^2 \int_0^T \int_{\RR^d \backslash \{0\}} |G(\tilde X^{\e}_{s-})|^2 |z|^2 N^{\frac{\varphi_{\e}}{\e}} (ds,dz) > \kappa, \sup_{s\in [0, T]} |\ti X^\e_s| \lqq \rR(\e) \Big ) 
+2 e^{- \frac{1}{2} \rR(\e)} + C\e \rR(\e)\\
&\quad \lqq \bar \PP  \Big (  2 L^2 \e^2(1+ \rR^2(\varepsilon)) \int_0^T \int_{\RR^d \backslash \{0\}} |z|^{2} N^{\frac{\varphi_{\e}}{\e}} (ds,dz) > \kappa \Big ) +  2 e^{- \frac{1}{2} \rR(\e)} + C \e \rR(\e)\\
&\quad \lqq \frac{2 L^2\varepsilon(1+ \rR^2(\varepsilon))}{\kappa} \int_0^T \int_{\RR^d \backslash \{0\}} |z|^{2} \phi_\e(s, z) \nu(dz) ds   +2 e^{- \frac{1}{2} \rR(\e)} + C \e \rR(\e)\\
&\quad \lqq \frac{2 L^2C_2}{\kappa} (1+\rR^2(\e))\varepsilon  +2 e^{- \frac{1}{2}\rR(\e)} + C \e \rR(\e) \ra 0, \quad \mbox{ as } \e \ra 0, 
 \end{align*}
 since $\varepsilon \rR^2(\varepsilon) \ra 0$ whenever $\varepsilon \ra 0$.
In other words, $[ \bar M^\e  ]_T \ra 0$ as $\e \ra 0$ in probability and therefore in law, 
which implies that  $([ \bar M^\e])_{\e\in (0, 1]}$ is $C$-tight.  
\paragraph*{Step 3.}
Due to Theorem 6.1.1 in \cite{Kallianpur Xiong} the laws of the family  $\tilde Z_t^\e= x + J_t^\e+ M_t^\e$ 
are tight in $\DD([0,T], \mathbb{R}^d)$. By Prokhorov's Theorem there exists the weak limit 
of $(\tilde X^{\e_n}, J^{\e_n}, M^{\e_n})$ for some subsequence $\e_n \ra 0$. 
Skorokhod's representation's theorem implies that there exists a triplet of random variables 
$(\tilde X, \tilde \varphi ,0)$  defined on $(\bar{\mathfrak{M}}, \bB(\bar{\mathfrak{M}}), \bar \PP)$ 
such that $(\tilde X^{\e_n}, J^{\e_n}, M^{\e_n})$ given by (\ref{chpt2: eq controlled sde}) and (\ref{eq: the J and M})
converges to  $(\tilde X, \tilde \varphi,0)$ $\bar \PP$-a.s. as $n\ra \infty$.  
Due to  \eqref{eq: integrability controls- first order1} and the continuity 
of the functions $b$ and $G$ we can pass to the limit $\tilde X^{\e_n}_t \ra \tilde X_t$ 
pointwise (in $t \in [0,T]$) and  $\bar \PP$-a.s. in \eqref{chpt2: eq controlled sde}. 
Hence we have that $(\tilde X_s)_{t\in [0, T]}$ satisfies $\bar \PP$-a.s.  
\begin{align*}
\tilde X_t = x + \int_0^t b(\tilde X_s) ds + \int_0^t \int_{\RR^d \backslash \{0\}} G(\tilde X_s) 
(\tilde{\varphi}(s,z)-1) z \nu(dz) ds, \quad t\in [0, T].
\end{align*}
Therefore we conclude that $\tilde{X}= \mathcal{G}^{0}(\tilde \varphi)$. 
Combining that $\varphi $  and $\tilde \varphi$ have the same law under $\bar \PP$ and 
the C-tightness of $(\tilde X^\varepsilon)_{\varepsilon>0}$  implies the
$\bar \PP$-almost sure convergence $\displaystyle \sup_{t \in [0, T]} |\tilde X^{\varepsilon_n}_t - \tilde X_t| \ra 0$ 
as $n \ra \infty$ and hence the convergence in law we infer 
\[ 
\mathcal{G}^0 (\varphi) \quad \text{ is a weak limit point of } \quad \mathcal{G}^{\e} (\e N^{\frac{1}{\e} \varphi_{\e}}). 
\]
This finishes the proof.
\end{proof}

\paragraph*{Proof of Theorem \ref{thm: LDP full process}.}
 Proposition \ref{proposition: first condition LDP big jumps} and \ref{proposition: second part ldp big jumps process} imply
Condition 2.2(a) and (b) given  in \cite{BCD13} for $(X^{\e, x})_{\varepsilon>0}$. 
Hence Theorem 2.4 of \cite{BCD13} finishes the proof.
\begin{flushright} $\square$ \end{flushright}
\subsection{Some useful consequences}
\no In the sequel we establish the continuity of the LDP of $(X^{\e, x})_{\e>0}$ 
with respect  to the initial condition $x \in \RR^d$. 

\begin{proposition} \label{chpt2: proposition uniform large dev principle} 
 Given $T>0$ and $x \in D$ let $F \subset \DD([0,T], \mathbb{R}^d)$ 
 be closed and $G \subset \DD([0,T], \mathbb{R}^d)$ open with respect to the 
 Skorokhod topology. Then we have
\begin{align}
&\limsup_{\substack{\e\rightarrow 0\\ y \ra x}}  \e \ln  \bar \PP (X^{ \e, y} \in F) \lqq - \inf_{f \in F} \JJ_{x, T}(f), \label{eq: uniform ldp a}\\
&\liminf_{\substack{\e\rightarrow 0\\ y \ra x}}  \e \ln \bar  \PP (X^{ \e, y} \in G) \gqq - \inf_{g \in G} \JJ_{x, T}(g). \label{eq: uniform ldp b}
\end{align}
\end{proposition}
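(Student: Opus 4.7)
My strategy is to reduce the statement to the LDP of Theorem \ref{thm: LDP full process} at the fixed base point $x$ by means of a synchronous coupling between $X^{\e,y}$ and $X^{\e,x}$ driven by the same Poisson random measure $\e N^{1/\e}$. Setting $Y^{\e}_t := X^{\e,y}_t - X^{\e,x}_t$, Itô's formula applied to $|Y^{\e}_t|^2$ together with Hypothesis \ref{condition: det dynamical system}.1 and the Lipschitz control of Hypothesis \ref{condition: on the multiplicative coefficient} yields, after taking expectations, the a-priori bound
\begin{equation*}
\bar\EE|Y^{\e}_t|^2 \lqq |y-x|^2 \exp\Big(\big(-2c_1 + \e L^2 \int_{\RR^d\backslash\{0\}} |z|^2\nu(dz)\big) t\Big),
\end{equation*}
a genuine contraction for small $\e$ thanks to the finiteness of $\int |z|^2 \nu(dz)$ under Hypothesis \ref{condition: the measure nu}.

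The crux is to upgrade this $L^2$ bound to an exponential concentration estimate on the LDP scale: for every $a,\eta>0$ there should exist $\e_0, \delta_0>0$ such that
\begin{equation*}
\bar\PP\Big(\sup_{t \in [0,T]} |Y^{\e}_t| > \eta\Big) \lqq e^{-a/\e}
\qquad \mbox{ for all } \e \in (0, \e_0] \mbox{ and } |y-x| < \delta_0.
\end{equation*}
I would obtain this in the spirit of Proposition \ref{proposition: priori estimate controlled process}: split the jump driver at an auxiliary scale $\rR(\e)\ra\infty$ with $\e\rR^2(\e)\ra 0$; apply the Bernstein-type martingale inequality of \cite{DZ01} to the compensated part of $Y^{\e}$ restricted to $\{|z|\lqq \rR(\e)\}$, whose predictable bracket is controlled by the dissipative estimate above and by $\e L^2 \sup|Y^{\e}|^2 \int|z|^2\nu(dz)$; and dispose of the complement $\{|z|>\rR(\e)\}$ using the exponential integrability of Hypothesis \ref{condition: the measure nu}.2, which renders the probability of any single jump of that size on $[0,T]$ superexponentially small in $1/\e$.

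Granted the coupling estimate, both bounds follow from a standard tube inclusion. For \eqref{eq: uniform ldp a}, fix $\eta>0$ and write
\begin{equation*}
\bar\PP(X^{\e,y}\in F) \lqq \bar\PP(X^{\e,x}\in F^\eta) + \bar\PP\big(d_{J_1}(X^{\e,y},X^{\e,x})>\eta\big),
\end{equation*}
where $F^\eta$ is the closed $\eta$-neighborhood of $F$. Theorem \ref{thm: LDP full process} bounds the first summand and the coupling estimate makes the second negligible on the LDP scale; goodness of $\JJ_{x,T}$ combined with closedness of $F$ then yields $\inf_{F^\eta}\JJ_{x,T}\ra \inf_F\JJ_{x,T}$ as $\eta\ra 0$ by the usual level-set compactness argument. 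For \eqref{eq: uniform ldp b}, pick $\varphi\in G$ with $\JJ_{x,T}(\varphi)<\infty$, select $\eta$ with $B_{J_1}(\varphi,2\eta)\subset G$, and combine
\begin{equation*}
\bar\PP(X^{\e,y}\in G)\gqq \bar\PP(X^{\e,x}\in B_{J_1}(\varphi,\eta)) - \bar\PP\big(d_{J_1}(X^{\e,y},X^{\e,x})>\eta\big)
\end{equation*}
with the LDP lower bound, and take the infimum over $\varphi$. I expect the exponential coupling estimate to be the principal obstacle: with merely Lipschitz $G$ and unbounded jumps, the truncation scale $\rR(\e)$ has to be tuned delicately against the tail in Hypothesis \ref{condition: the measure nu}.2 so that both the Bernstein bound on the truncated martingale and the complementary $\nu$-tail vanish faster than $e^{-a/\e}$ for every $a>0$, uniformly in $|y-x|<\delta_0$.
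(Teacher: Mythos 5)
Your architecture (synchronous coupling of $X^{\e,y}$ and $X^{\e,x}$ plus a tube argument) is genuinely different from the paper's proof, which instead invokes Theorem 4.4 of \cite{Maroulas} and verifies its two conditions -- convergence $\gG^{0,x_n}(g_n)\ra\gG^{0,x}(g)$ and convergence in law of $\gG^{\e,x_\e}(\e N^{\frac{\varphi_\e}{\e}})$ for varying initial points $x_n\ra x$, $x_\e \ra x$ -- by rerunning the weak convergence argument of Theorem \ref{thm: LDP full process}. Your reduction of \eqref{eq: uniform ldp a}--\eqref{eq: uniform ldp b} to the coupling estimate is sound (note $d_{J_1}\lqq$ the uniform distance, and goodness of $\JJ_{x,T}$ gives $\inf_{F^\eta}\JJ_{x,T}\ra\inf_F\JJ_{x,T}$), and you correctly identified that the estimate must hold for \emph{every} rate $a>0$ with $\delta_0=\delta_0(a,\eta)$, since $\inf_F\JJ_{x,T}$ can be arbitrarily large or infinite.

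The gap is that the mechanism you propose cannot produce that estimate. If you stop $Y^\e$ at the level $\eta$, the bracket of its martingale part is of order $\e L^2\eta^2\,T\int|z|^2\nu(dz)$, so the Bernstein inequality of \cite{DZ01} gives at best a bound of the form $\exp(-c/\e)$ with $c$ depending only on $L$, $T$, $\nu$, $\eta$ -- the hypothesis $|y-x|<\delta_0$ never enters the exponent, so the rate cannot be pushed beyond a fixed constant, and ``for every $a$'' fails. The closeness of the initial points can only enter through the multiplicative structure: since the jump coefficient of $Y^\e$ is bounded by $L|Y^\e_{s-}||z|$, one has $|Y^\e_t|\lqq|y-x|\exp(\mathcal{V}_t)$ for an additive L\'evy-type functional $\mathcal{V}$, and $\bar\PP(\sup_t|Y^\e_t|>\eta)$ must be estimated by an exponential Chebyshev/Chernoff bound on $\mathcal{V}$ at the level $\ln(\eta/\delta_0)$; the exponent is then a Legendre-transform-type quantity evaluated at $\ln(\eta/\delta_0)$, divided by $\e$, which tends to infinity as $\delta_0\ra0$ thanks to the exponential moments of Hypothesis \ref{condition: the measure nu}.2. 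In other words you must work with $\ln|Y^\e|$ (or the stochastic exponential), not with a Bernstein bound on $Y^\e$ itself. A secondary inconsistency: discarding jumps with $|z|>\rR(\e)$ where $\e\rR^2(\e)\ra0$ leaves an event of probability roughly $\frac{T}{\e}e^{-\Gamma\rR^2(\e)}=e^{-o(1/\e)}$, which is not even negligible at the LDP scale, let alone superexponentially small; for that step you would need $\e\rR^2(\e)\ra\infty$, after which the bracket bound has to be reexamined. As written, the central coupling estimate is therefore unproven, and with it both halves of the proposition.
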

\begin{proof}
Due to Theorem 4.4 in \cite{Maroulas} the result follows from verifying the following statements.
\begin{itemize}
\item[1.] Let $(x_n)_{n \in \NN} \subset \RR^d$ such that $x_n \ra x$ as $n \ra \infty$. 
Given $M>0$ and $(g_n)_{n \in \NN} \in \mathfrak{S}^M$ such that $\nu^{g_n}_T \ra \nu_T^{g}$ 
in the vague topology as $n \ra \infty$. Then we obtain  
\begin{align*}
\gG^{0,x_n}(g_n) \ra \gG^{0,x}(g), \mbox{ as } n \ra \infty. 
\end{align*}
\item[2.] Let $M>0$, $(x_\varepsilon)_{\varepsilon>0} \subset \RR^d$ and 
$(\varphi_\varepsilon)_{\varepsilon>0} \subset \mathfrak{U}^M_{+}$ such that $x_\varepsilon \ra x$ 
and $\varphi_\varepsilon \Rightarrow \varphi$ in law  as $\varepsilon \ra 0$. 
Then we obtain the following convergence in law 
\begin{align*}
\gG^{\varepsilon, x_\varepsilon} \Big ( \varepsilon \tilde N^{\frac{1}{\varepsilon} \varphi_\varepsilon}\Big ) \Rightarrow \gG^{0,x}(g) \quad \text{ as } \varepsilon \ra 0.
\end{align*}
\end{itemize}
The verification of the conditions above is analogous to the proof of 
Theorem \ref{thm: LDP full process} and we omit its details.
\end{proof}

\no As a consequence of Proposition \ref{chpt2: proposition uniform large dev principle} 
we derive a uniform LDP for 
$(X^{\e,x})_{\e>0}$ when the initial state $x \in K$ for $K \subset D$ a 
closed (and bounded) set. The proof is virtually the 
same as the one given in the Brownian case and we omit it. 
We refer the reader to Corollary 5.6.15 in \cite{DZ98}.

\begin{corollary}\label{chpt2: corol Ldp uniform in compact sets of initial states} 
Let $T>0$, $K \subset D$ be compact, $F \subset \DD ([0,T], \mathbb{R}^d)$ closed, 
$G \subset \DD ([0,T], \mathbb{R}^d)$ open  with respect to  the $J_1$ topology and $x \in D$. 
Then it follows
\begin{align*}
& \limsup_{\e\rightarrow 0} \sup_{y \in K} \e \ln \bar  \PP (X^{\e, y} \in F) \lqq - \inf_{y \in K, f \in F} \JJ_{y, T}(f), \\
& \liminf_{\e\rightarrow 0} \inf_{y \in K} \e \ln \bar  \PP (X^{ \e, y} \in G) \gqq - \inf_{y \in K, g \in G} \JJ_{y, T}(g). 
\end{align*}
\end{corollary}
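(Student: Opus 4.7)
The plan is to promote the pointwise uniformity in $y$ around each individual $x\in K$, already obtained in Proposition~\ref{chpt2: proposition uniform large dev principle}, to uniformity over all of $K$ via a standard finite-covering argument, exactly as in the Brownian case treated in Corollary~5.6.15 of \cite{DZ98}.

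For the upper bound, fix $\delta>0$ and $F\subset\DD([0,T],\RR^d)$ closed. By \eqref{eq: uniform ldp a} in Proposition~\ref{chpt2: proposition uniform large dev principle}, at every $x\in K$ there exist a radius $\rho_x>0$ and a threshold $\e_x>0$ such that
\begin{equation*}
\e\,\ln\bar \PP(X^{\e,y}\in F) \leq -\inf_{f \in F}\JJ_{x,T}(f)+\delta \quad \text{for every } y \in B(x,\rho_x) \text{ and every }\e \in (0,\e_x].
\end{equation*}
By compactness of $K$ I extract a finite subcover $\{B(x_i,\rho_{x_i})\}_{i=1}^{n}$ and put $\e_0 := \min_{1\leq i\leq n}\e_{x_i}$. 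For $\e\in(0,\e_0]$ and $y\in K$, $y$ lies in some $B(x_i,\rho_{x_i})$, hence
\begin{equation*}
\e\,\ln\bar \PP(X^{\e,y}\in F) \leq -\inf_{f\in F}\JJ_{x_i,T}(f)+\delta \leq -\inf_{y'\in K,\,f\in F}\JJ_{y',T}(f)+\delta,
\end{equation*}
where the last step uses $x_i\in K$. Taking $\sup_{y\in K}$, then $\limsup_{\e\to 0}$, and finally sending $\delta\to 0$ concludes the upper bound.

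The lower bound follows by applying the same recipe to \eqref{eq: uniform ldp b} in Proposition~\ref{chpt2: proposition uniform large dev principle}: for each $x\in K$ and each $\delta>0$ one obtains a neighborhood of $x$ and a threshold on which $\e\,\ln\bar\PP(X^{\e,y}\in G)\geq -\inf_{g\in G}\JJ_{x,T}(g)-\delta$ for any open $G\subset\DD([0,T],\RR^d)$. A finite subcover of $K$ together with the minimum of the corresponding thresholds gives the bound uniformly in $y\in K$; passing to $\inf_{y\in K}$, then $\liminf_{\e\to 0}$ and finally sending $\delta\to 0$ yields the claimed inequality.

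The whole argument is essentially mechanical once Proposition~\ref{chpt2: proposition uniform large dev principle} is available; the only bookkeeping concerns the finitely many radii $\rho_{x_i}$ and thresholds $\e_{x_i}$, which is resolved by extracting a finite subcover and taking the minimum of the thresholds. This is precisely why the paper omits the details and refers to Corollary~5.6.15 of \cite{DZ98}.
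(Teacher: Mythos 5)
Your strategy---unpacking the double limits of Proposition~\ref{chpt2: proposition uniform large dev principle} into a radius $\rho_x$ and a threshold $\e_x$ at each $x\in K$ and then extracting a finite subcover---is exactly the argument the paper omits and delegates to Corollary~5.6.15 of \cite{DZ98}, and your upper bound is correct as written (with the routine caveat that when $\inf_{y\in K,\,f\in F}\JJ_{y,T}(f)=\infty$ the quantity $-\inf_{f\in F}\JJ_{x,T}(f)+\delta$ should be replaced by an arbitrary finite level $-M$, which does not change the structure of the covering argument).

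The lower bound, however, has a genuine gap at the very last step. On $B(x_i,\rho_{x_i})$ your local bound reads $\e\ln\bar\PP(X^{\e,y}\in G)\gqq -\inf_{g\in G}\JJ_{x_i,T}(g)-\delta$, so after taking $\inf_{y\in K}$ over the finite cover you only obtain $\min_{1\lqq i\lqq n}\bigl(-\inf_{g\in G}\JJ_{x_i,T}(g)\bigr)-\delta=-\max_{1\lqq i\lqq n}\inf_{g\in G}\JJ_{x_i,T}(g)-\delta\gqq-\sup_{y\in K}\inf_{g\in G}\JJ_{y,T}(g)-\delta$. This is the standard uniform lower bound of \cite{DZ98}, with a \emph{supremum} over $y\in K$ on the right-hand side; it is strictly weaker than the inequality as printed in the corollary, whose right-hand side $-\inf_{y\in K,\,g\in G}\JJ_{y,T}(g)$ equals $\sup_{y\in K}\bigl(-\inf_{g\in G}\JJ_{y,T}(g)\bigr)$. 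The printed version cannot follow from the covering argument (the worst initial point in $K$ cannot satisfy a lower bound dictated by the best one, and it is false in general whenever $y\mapsto\inf_{g\in G}\JJ_{y,T}(g)$ is non-constant on $K$), so asserting that your scheme ``yields the claimed inequality'' is not accurate. The discrepancy is almost certainly a misprint in the statement: in its later application (the last display in the proof of Lemma~\ref{chpt2: lemma first lemma upper bound}) the paper invokes precisely the $\sup_{y\in K}$ form that your argument does prove. You should either prove the corrected statement and flag the misprint, or explain explicitly how you pass from $-\sup_{y\in K}\inf_{g\in G}\JJ_{y,T}(g)$ to $-\inf_{y\in K,\,g\in G}\JJ_{y,T}(g)$---which, as it stands, you cannot.
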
 
\no In the sequel this result is applied to the first exit time problem of $X^{\e, x}$ from $D$. 

\bigskip
\section{The first exit time problem in the small noise limit} \label{sec: first exit}
In this section we fix the standing assumptions 
of the Hypotheses \ref{condition: det dynamical system}, \ref{condition: the measure nu}, \ref{condition: on the multiplicative coefficient} and \ref{condition: domain}
for some bounded domain $D \subset \RR^d$, $x\in D$ and $\nu \in \fM$.

\subsection{Continuity properties of the cost function} \label{sec: continuity pp}
 The following proposition ensures the (local) controllability 
of the dynamical system given by the controlled integral equation (\ref{eq: controlled ODE}) 
in small balls around the initial position. It plays a crucial role in the proof of 
the upper bound in Theorem \ref{thm: first exit time} given in the next subsection. 
We stress that in the more general setting discussed in Subsection \ref{sec: extensions remarks} 
the following result is stated as Hypothesis \ref{condition: generalized statement-potential} .
\begin{proposition} \label{condition: potential}  
Let Hypotheses \ref{condition: det dynamical system}, B, C and  \ref{condition: domain} be satisfied. 
For every $\rho_0>0$ there exist a constant 
$M>0$ and a non-decreasing function $\xi:[0, \rho_0] \ra \RR^{+}$ 
with $\lim_{\rho \ra 0} \xi(\rho)=0$ satisfying the following. 
Then for all $x_0, y_0 \in \RR^d$ such that $|x_0- y_0| \lqq \rho_0$ there exist 
$\Phi \in C([0, \xi(\rho_0)],\RR^d)$ and $g \in \mathfrak{S}^M$ 
such that $\Phi(\xi(\rho_0))=y_0$ and 
\begin{align} \label{eq: cond on potential: controllability cond C1}
 \Phi(t) = x_0 + \int_0^t b(\Phi(s)) ds + \int_0^t \int_{\RR^d \backslash \{ 0\}} G(\Phi(s)) (g(s,z)-1) z \nu(dz) ds, \quad t \in [0, \xi(\rho_0)].
\end{align}
\end{proposition}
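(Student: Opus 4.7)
The strategy is to connect $x_0$ and $y_0$ by a straight-line path over a short time interval $T=\xi(\rho_0)$, and then to realize the control $g$ that drives the system along this path as a bounded linear perturbation of the constant density $1$ supported in a fixed annulus. Set $\xi(\rho):=\sqrt{\rho}$ and $T:=\xi(\rho_0)$. For $x_0,y_0\in\RR^d$ with $|x_0-y_0|\lqq\rho_0$ define
\[
\Phi(t) := x_0 + \frac{t}{T}(y_0-x_0), \qquad t\in[0,T],
\]
so that $\Phi\in C([0,T],\RR^d)$, $\Phi(T)=y_0$ and $|\dot\Phi(t)|\lqq \sqrt{\rho_0}$. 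Plugging $\Phi$ into \eqref{eq: cond on potential: controllability cond C1} and differentiating reduces the problem to exhibiting a control $g$ whose instantaneous first moment is
\[
\int_{\RR^d\backslash\{0\}} (g(t,z)-1)\,z\,\nu(dz) \;=\; V(t) \;:=\; \frac{\dot\Phi(t)-b(\Phi(t))}{G(\Phi(t))}.
\]
Since $b\in\cC^1$, $G$ is continuous and nonvanishing, and $\Phi([0,T])$ is compact, the map $t\mapsto V(t)$ is continuous and bounded by some constant $C_0=C_0(\rho_0,x_0)$.

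\textbf{Construction of $g$.} By Hypotheses~B.1 and~B.3 any annulus $A=\{z\in\RR^d: 1\lqq|z|\lqq R\}$ has $\nu(A)\in(0,\infty)$, and because $\frac{d\nu}{dz}>0$ everywhere, the symmetric matrix $\Sigma_A:=\int_A zz^\top \nu(dz)$ is positive definite. Enlarging $R$ if necessary one ensures $\|\Sigma_A^{-1}\|\lqq (2R\,C_0)^{-1}$. Setting $\alpha(t):=\Sigma_A^{-1}V(t)\in\RR^d$, define
\[
g(t,z):= 1 + \ind_A(z)\,\langle \alpha(t),z\rangle.
\]
Then $|g(t,z)-1|\lqq \tfrac12$ on $A$ and $g\equiv 1$ off $A$, so $g\in[\tfrac12,\tfrac32]$ everywhere. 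A direct computation yields
\[
\int_{\RR^d\backslash\{0\}}(g(t,z)-1)z\,\nu(dz) \;=\; \int_A \langle\alpha(t),z\rangle z\,\nu(dz) \;=\; \Sigma_A \alpha(t) \;=\; V(t),
\]
so that $(\Phi,g)$ satisfies \eqref{eq: cond on potential: controllability cond C1} with $\Phi(\xi(\rho_0))=y_0$ by the very construction. For the entropy bound, the Taylor estimate $u\log u - u + 1 \lqq (u-1)^2$ valid on $u\in[\tfrac12,\tfrac32]$ combined with the previous bounds gives
\[
\eE_T(g) \;\lqq\; \int_0^T\!\!\int_A |\langle\alpha(t),z\rangle|^2 \nu(dz)\,dt \;\lqq\; T\sup_{t\in[0,T]}|\alpha(t)|^2\int_A|z|^2\nu(dz) \;\lqq\; M,
\]
for a constant $M=M(\rho_0)$. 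Hence $g\in\mathfrak{S}^M$, while $\xi$ is non-decreasing and $\xi(\rho)=\sqrt{\rho}\to 0$ as $\rho\to 0$, as required.

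\textbf{Main obstacle.} The delicate point is to guarantee $g\gqq 0$ together with a uniform entropy bound for all $x_0, y_0$ with $|x_0-y_0|\lqq\rho_0$. Since $|V(t)|$ inherits the size of $b(\Phi(t))$, it is controlled in terms of $\rho_0$ only when the starting point $x_0$ lies in a bounded region, which is precisely the situation of the first exit application where $x_0\in \bar D$. For the abstract statement as written, the freedom to enlarge the annulus $A=A_{\rho_0}$ (Hypothesis~B.3 ensures $\Sigma_{A_{\rho_0}}$ remains positive definite and $\|\Sigma_{A_{\rho_0}}^{-1}\|$ can be made arbitrarily small) is what absorbs the growth of $|V|$ into $|\alpha|$. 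The linear tilt (rather than an exponential one that minimizes entropy) is preferred because the entropy of a linear perturbation is quadratic in $\alpha$, which together with $T=\sqrt{\rho_0}$ automatically drives $\eE_T(g)$ small as $\rho_0 \to 0$.
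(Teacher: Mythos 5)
Your construction is genuinely different from the paper's (the paper relocates, at each time $s$, a uniform-in-Lebesgue blob of mass centered at the required drift vector $P_{x_0,y_0}(s)=\dot\Phi(s)-b(\Phi(s))$, scaled by $1/G(\Phi(s))$, which is exactly what Hypothesis~B.3 is used for), but your argument has a genuine gap at its central step. You claim that by enlarging the annulus $A=\{1\lqq |z|\lqq R\}$ one can make $\|\Sigma_A^{-1}\|\lqq (2RC_0)^{-1}$. This is false: by Hypothesis~B.2 the matrix $\int_{|z|\gqq 1} zz^\top\,\nu(dz)$ is finite, so $\Sigma_A$ is dominated in the positive semidefinite order by a fixed finite matrix and $\lambda_{\min}(\Sigma_A)$ is bounded above uniformly in $R$; hence $\|\Sigma_A^{-1}\|$ decreases to a strictly positive constant as $R\to\infty$, while your target bound $(2RC_0)^{-1}$ tends to $0$. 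More structurally, any control of the form $g=1+\ind_A\langle\alpha(t),z\rangle$ with $g\gqq 0$ on a (symmetric) annulus forces $|\langle\alpha(t),z\rangle|\lqq 1$ on $A$, whence
\begin{align*}
\Big|\int_{\RR^d\backslash\{0\}}(g(t,z)-1)z\,\nu(dz)\Big|\lqq \int_{A}|z|\,\nu(dz)\lqq \int_{\RR^d\backslash\{0\}}|z|\,\nu(dz)<\infty ,
\end{align*}
so the linear tilt can only realize drift targets $V(t)$ of norm below a fixed finite constant determined by $\nu$. The proposition quantifies over all $x_0,y_0\in\RR^d$ with $|x_0-y_0|\lqq\rho_0$, and the coercivity Hypothesis~A.1 forces $|b(x_0)|\gqq c_1|x_0|$ while no positive lower bound on $|G|$ compensating this growth is assumed; therefore $V(t)=(\dot\Phi(t)-b(\Phi(t)))/G(\Phi(t))$ is unbounded over the admissible initial points and your construction simply cannot produce a nonnegative control for distant $x_0$. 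The remedy you sketch in your "Main obstacle" paragraph relies on the same false statement that $\|\Sigma_{A}^{-1}\|$ can be made arbitrarily small, so it does not close the gap.

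By contrast, the paper's choice
\begin{align*}
g(s,z)= 1+ \frac{\big(\tfrac{d\nu}{dz}(z)\big)^{-1}}{\lambda^d\big(B_{R'}(P_{x_0,y_0}(s))\big)}\,\frac{1}{G(\Phi(s))}\,\ind_{B_{R'}(P_{x_0,y_0}(s))}(z)
\end{align*}
adds mass whose barycenter is exactly $P_{x_0,y_0}(s)$, and since by B.3 the density of $\nu$ is strictly positive everywhere, the ball can be centered at an arbitrarily remote point; this is how arbitrary (unbounded) drift targets are reached with a nonnegative, bounded control of finite entropy. If you want to keep a "tilt on a fixed set" philosophy, you would need a nonlinear (e.g.\ exponential) tilt or a relocated blob as in the paper; with the linear ansatz the nonnegativity constraint caps the reachable drift and the proof fails. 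Aside from this, your path parametrization $\xi(\rho)=\sqrt{\rho}$ and the entropy estimate via $\ell(u)\lqq (u-1)^2$ on $[\tfrac12,\tfrac32]$ are fine, but they only apply in the regime where $|V|$ is small, which is not the general situation covered by the statement.
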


\begin{proof}
For fixed $\rho_0>0$ and $x_0, y_0 \in \RR^d$ such that $|x_0-y_0|\lqq \rho_0$ consider the straight line that links $x_0$ and $y_0$,
\begin{align*}
\Phi(t):= x_0 + t \frac{y_0- x_0}{\rho_0}, \quad t \in [0, \rho_0].
\end{align*}
Let $\xi(\rho)= \rho$, $\rho \in [0, \rho_0]$. We observe that $\Phi \in C([0, \xi(\rho_0)]; \RR^d)$ 
and $\Phi(\xi(\rho_0))= y_0$. The construction of the control function $g \in \mathfrak{S}$ such that 
(\ref{eq: cond on potential: controllability cond C1}) holds follows from the next observation. 
Due to Hypothesis \ref{condition: the measure nu} every vector $x \in \RR^d$ can be written for 
some measurable function $f^x: \RR^d \longrightarrow [0,\infty)$ as
\begin{align*}
x= \int_{\text{supp}(\nu)} z f^x(z) \nu(dz).
\end{align*}
For instance we choose the function
\begin{align*}
f^x(z)= \frac{1}{\lambda^d(B_{R^{'}}(x))} \Big ( \frac{d \nu}{dz}(z) \Big )^{-1} \textbf{1}_{B_{R'}(x)}(z),
\end{align*}
where $\lambda^d$ is the Lebesgue measure on $(\RR^d, \bB(\RR^d))$ and $R':= \frac{1}{2} d(x,  (\text{supp}(\nu))^c )\wedge 1 $ 
with the convention of $d(x,\emptyset)=\infty$. Let $P_{x_0,y_0}(s)= \frac{y_0-x_0}{\rho_0} - b(\Phi(s))$ for every $s \in [0, \xi(\rho_0)]$. Set 
\begin{align*}
&g:[0, \xi(\rho_0)] \times \RR^d \backslash \{0\} \longrightarrow [0,\infty) \\
g(s,z)&:= 1+ \frac{\Big ( \frac{d \nu}{dz}(z) \Big )^{-1} }{\lambda^d(B_{R'}(P_{x_0,y_0}(s) ) ) } \frac{1}{G(\Phi(s))} \textbf{1}_{B_{R'}(P_{x_0,y_0}(s) )}.
\end{align*}
Since $\nu$ is a finite measure and $g$ is bounded it follows that $g \in \mathfrak{S}$ and (\ref{eq: cond on potential: controllability cond C1}) holds. This finishes the proof.
\end{proof}

\no We define the following cost function associated to the system \eqref{eq: the sde full perturbation}  
which measures the cost of steering $U^g$ given in \eqref{eq: controlled ODE} 
from its initial position $x\in D$ to some point $y\in \RR^d$ in exactly time $t>0$ by
\begin{align*}
V(x,y,t) := \inf \{ \eE_t(g) ~|~ g \in \mathfrak{S}_\varphi \quad \varphi(s) = U^g(s,x),\quad s \in [0,t], \quad \varphi(t)= y\}.
\end{align*}
The following continuity properties are essentially a consequence 
of Proposition \ref{condition: potential} and are shown in Subsection \ref{subsec: continuity cost function} of the appendix.  
\begin{lem} \label{prop: continuity pps cost functional} 
Let the assumptions of Theorem \ref{thm: first exit time} be satisfied. Then for any $\delta > 0$ there exists $\rho > 0$ such that 
\begin{align}
&\displaystyle \sup_{x, y \in B_{\rho}(0)} \displaystyle \inf_{t \in [0,1]} V(x,y, t) < \delta \label{eq: pp cont1} \\
&\displaystyle \sup_{\substack{x,y \in D \\ \inf_{z \in D^c} |x-z| + |y-z| \lqq \rho} } \displaystyle \inf_{t \in [0,1]} V(x,y,t) < \delta. \label{eq: pp cont2} 
\end{align}
\end{lem}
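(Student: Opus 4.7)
The plan is to exhibit, for any admissible pair $(x,y)$, a control $g\in \mathfrak{S}$ steering $U^g(\cdot\,;x)$ to $y$ in some time $t\in[0,1]$ with entropy strictly less than $\delta$, whence by definition $V(x,y,t)\lqq \eE_t(g)<\delta$. Proposition~\ref{condition: potential} is tailor-made for this purpose: it supplies, for any prescribed pair within distance $\rho_0$ of each other, an explicit straight-line path $\Phi$ and an explicit control $g\in\mathfrak{S}^M$ realizing the controlled equation \eqref{eq: cond on potential: controllability cond C1} on the time interval $[0,\xi(\rho_0)]=[0,\rho_0]$. The pivotal observation is that the integrand of the entropy functional admits a bound of the form $\int(g(s,\cdot)\ln g(s,\cdot)-g(s,\cdot)+1)\nu(dz)\lqq C$ uniform in $s\in[0,\rho_0]$ and in $(x,y)$ ranging over a fixed compact region, so that
\begin{equation*}
V(x,y,\rho_0) \;\lqq\; \eE_{\rho_0}(g) \;\lqq\; C\rho_0 \;\longrightarrow\; 0 \quad \mbox{as } \rho_0\to 0.
\end{equation*}

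First I would carry out statement \eqref{eq: pp cont1}. Fix $\rho>0$ small and take $x,y\in B_\rho(0)$; then $|x-y|\lqq 2\rho$. I apply Proposition~\ref{condition: potential} with $\rho_0:=2\rho$ to obtain $g$ and $\Phi$ with $\Phi(0)=x$, $\Phi(2\rho)=y$. As $x,y$ vary in $B_\rho(0)\subset B_1(0)$ the path $\Phi$ stays in the compact set $\overline{B_2(0)}$; on this set $b$ and $G$ are bounded and $G$ is bounded away from $0$ (Hypothesis~\ref{condition: on the multiplicative coefficient} together with the standing non-vanishing assumption), and $\nu$ is finite with continuous, strictly positive density by Hypothesis~\ref{condition: the measure nu}. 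These facts yield a uniform constant $C$ bounding the inner integral and hence $V(x,y,2\rho)\lqq 2C\rho<\delta$ provided $\rho<(\delta/(2C))\wedge\tfrac12$; note that then $t=2\rho\in[0,1]$ as required.

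Next I would treat statement \eqref{eq: pp cont2}. The defining condition furnishes $z^\ast\in D^c$ with $|x-z^\ast|+|y-z^\ast|\lqq\rho$, whence $|x-y|\lqq\rho$. Since the definition of $V$ does not constrain the steering path to remain inside $D$, I apply Proposition~\ref{condition: potential} with $\rho_0:=\rho$ and obtain a path lying in the bounded tube $\{w\in\RR^d:\dist(w,\bar D)\lqq\rho\}$. On this compact set the uniform bound on the integrand goes through identically, giving $V(x,y,\rho)\lqq C'\rho<\delta$ for $\rho$ small.

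The hard part will be securing the uniformity of the constant $C$. In the construction underlying Proposition~\ref{condition: potential} the control is supported on a ball $B_{R'(s)}(P(s))$ whose radius $R'(s)=\tfrac12\dist(P(s),\mathrm{supp}(\nu)^c)\wedge 1$ may shrink if the effective velocity $P(s)=(y-x)/\rho_0-b(\Phi(s))$ approaches the singular point $0$ of $\nu$, which can cause the factor $(d\nu/dz)^{-1}$ appearing in $g$ to blow up. This degeneracy is harmless in principle, since when $P(s)$ is small the steering requirement is nearly trivial, but the cleanest way to rule it out is to perturb the straight-line path by an arbitrarily small tangential correction that forces $|P(s)|\gqq c>0$ uniformly in $(x,y)$, or equivalently to factor the steering through a fixed auxiliary point at positive distance from $0$. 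Either modification preserves the $O(\rho_0)$ bound on the entropy and secures the uniform constant needed for both \eqref{eq: pp cont1} and \eqref{eq: pp cont2}.
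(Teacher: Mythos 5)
Your proposal is correct and follows essentially the same route as the paper: both arguments reduce the claim to Proposition~\ref{condition: potential}, steering $x$ to $y$ by the explicit control $g\in\mathfrak{S}^M$ over the shrinking time interval $[0,\xi(\rho)]$ and observing that the corresponding entropy cost, hence $V(x,y,\xi(\rho))$, tends to $0$ as $\rho\to 0$. The only cosmetic difference is that you make the smallness quantitative via a per-time bound $\int \ell(g(s,z))\,\nu(dz)\lqq C$ (and discuss its uniformity in $(x,y)$), whereas the paper obtains the same conclusion by dominated convergence applied to the entropy integral over the shrinking interval.
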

\begin{corollary} \label{corollary: finite height}
Let the assumptions of Theorem \ref{thm: first exit time} be satisfied. Then $\bar V < \infty$ for $\bar V$ being defined by (\ref{eq: the potential}).
\end{corollary}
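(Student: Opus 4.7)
The plan is to produce, by hand, a single admissible controlled trajectory that starts at $0$, ends at a point of $D^c$ and has finite entropy cost; this directly bounds $\bar V$ from above by a finite number. The raw material is the local controllability statement Proposition \ref{condition: potential}, which says that for every small enough step size $\rho_0$ one can interpolate between any two points at distance at most $\rho_0$ by a path solving the controlled integral equation (\ref{eq: cond on potential: controllability cond C1}) with a control lying in the sublevel set $\mathfrak{S}^M$ of the entropy functional for some $M = M(\rho_0)$.

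Concretely, I would first pick any target point $z^{*} \in \pd D$, which lies in $D^c$ since $D$ is open; because $D$ is bounded, $|z^{*}|$ is finite. Choose $\rho_0 > 0$ and let $M$ and $\xi$ be supplied by Proposition \ref{condition: potential}. Discretize the segment from $0$ to $z^{*}$ as $0 = y_0, y_1, \ldots, y_N = z^{*}$ with $|y_{k+1} - y_k| \lqq \rho_0$ and $N = \lceil |z^{*}|/\rho_0 \rceil$. For each $k$, invoke Proposition \ref{condition: potential} to obtain a time-$\xi(\rho_0)$ controlled path $\Phi_k$ with control $g_k \in \mathfrak{S}^M$ joining $y_k$ to $y_{k+1}$. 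Concatenate the pieces in time to build $\varphi \in C([0,T], \RR^d)$ with $T = N \xi(\rho_0)$, $\varphi(0)=0$, $\varphi(T)=z^{*}$, driven by the concatenated control $g$ defined by glueing the $g_k$ together on the intervals $[k\xi(\rho_0), (k+1)\xi(\rho_0)]$.

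Because the entropy functional $\eE_T$ is an integral over $[0,T] \times \RR^d \backslash \{0\}$, it is additive under time-concatenation of controls, so $\eE_T(g) = \sum_{k=0}^{N-1} \eE_{\xi(\rho_0)}(g_k) \lqq N M < \infty$. This yields $\JJ_{0,T}(\varphi) \lqq \eE_T(g) \lqq N M$ and therefore $V(0, z^{*}) \lqq N M < \infty$, from which $\bar V = \inf_{z \notin D} V(0,z) \lqq V(0, z^{*}) < \infty$ follows. The only point requiring some care is the concatenation itself, namely verifying that the glued path $\varphi$ indeed satisfies (\ref{eq: controlled ODE}) on the full interval $[0,T]$ with the glued control $g$ and initial datum $0$. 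This is deterministic bookkeeping via time-translation on each subinterval combined with the matching endpoint conditions $\Phi_k(\xi(\rho_0)) = y_{k+1} = \Phi_{k+1}(0)$, and I do not expect a genuine obstacle; the substantive content of the corollary is already carried by Proposition \ref{condition: potential}, which is why it appears here as a short consequence of the preceding local controllability analysis.
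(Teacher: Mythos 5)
Your construction is correct, but it is more elaborate than necessary, and the extra work comes from reading Proposition \ref{condition: potential} as a purely local statement. The proposition is in fact stated for \emph{every} $\rho_0>0$ (with $M$ and $\xi$ depending on $\rho_0$), so the paper simply fixes one point $z\in D^c$, takes $\rho_0=|z|$, and applies the proposition once with $x_0=0$, $y_0=z$: this produces a single controlled path $\Phi\in C([0,\xi(\rho_0)],\RR^d)$ with control $g\in\mathfrak{S}^M$ and $\Phi(\xi(\rho_0))=z$, whence $\bar V\lqq \eE_{\xi(\rho_0)}(g)\lqq M<\infty$ with no discretization and no concatenation. Your multi-step version reaches the same conclusion and is sound: the glued path solves the controlled integral equation on each subinterval with matching endpoints, uniqueness of solutions of \eqref{eq: controlled ODE} identifies it with $\gG^{0,0}$ of the glued control, and the entropy functional is additive in time, giving the bound $NM$. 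The only price is the bookkeeping you flag (time-translation of the controls, checking the glued control still lies in some sublevel set $\mathfrak{S}^{NM}$ for the longer horizon), none of which is an obstacle; but since the quantitative value of the bound is irrelevant here, the one-shot application of the proposition is the cleaner route, and your chaining buys nothing beyond what a weaker, genuinely local form of the proposition would require.
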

\begin{proof}
We fix $z \in D^c$ and take $\rho_0=|z|$. By Proposition \ref{condition: potential} 
let $M< \infty$, $\xi:[0, \rho_0] \longrightarrow \RR^{+}$ 
and $g \in \mathfrak{S}^M$  such that \eqref{eq: cond on potential: controllability cond C1} 
holds for some $\Phi \in C([0, \xi(\rho_0)], \RR^d)$ and with $\Phi(\xi(\rho_0))=z$. Therefore we have  
\begin{align*}
\bar V := \displaystyle \inf_{z \in D^c} V(0,z) \lqq \int_0^{\xi(\rho_0)} \int_{\RR^d \backslash \{0\}} \ell(g(s,z)) \nu(dz) ds \lqq M < \infty.
\end{align*}
\end{proof}

\bigskip

\subsection{Proof of Theorem \ref{thm: first exit time}} \label{sec: the upper bound}
\begin{lemma} \label{chpt2: lemma first lemma upper bound} Let $c > 0$. 
There exist  $\rho_0 > 0$ and $s_0>0$ 
such that for any $\rho \in (0, \rho_0]$ it holds the limit   
\begin{align*}
\liminf_{\e \rightarrow 0} \e \ln \displaystyle \inf_{x \in B_\rho(0)} \bar  \PP (\sigma^{\e} (x) \lqq s_0) > - (\bar{V}+ c),
\end{align*}
where the potential height $\bar V$ is given in equation \eqref{eq: the potential}.
\end{lemma}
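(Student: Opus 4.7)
The plan is to produce, for every $x$ close to $0$, a deterministic controlled path of entropy cost strictly less than $\bar V + c$ that exits $D$ before a fixed horizon $s_0$, and then to invoke the uniform LDP lower bound of Corollary~\ref{chpt2: corol Ldp uniform in compact sets of initial states}. Since $\bar V < \infty$ by Corollary~\ref{corollary: finite height}, pick $z_0 \in D^c$ with $V(0, z_0) < \bar V + c/4$ and then, by the definition of $V(0, z_0)$, a time $T_0 > 0$, a control $g_0 \in \mathfrak{S}$ and a continuous path $\varphi_0$ on $[0, T_0]$ with $\varphi_0(0) = 0$, $\varphi_0(T_0) = z_0$ and $\eE_{T_0}(g_0) < \bar V + c/4$. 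Since $z_0$ might lie on $\partial D$, I would extend $\varphi_0$ across the boundary using the local controllability of Proposition~\ref{condition: potential} (combined with the inward-pointing condition in Hypothesis~\ref{condition: domain}, which guarantees nearby points in $(\bar D)^c$) by a short piece of entropy cost at most $c/4$, producing $\varphi_1 \in C([0, T_1], \RR^d)$ with $\varphi_1(T_1) = z^* \in (\bar D)^c$ and $\JJ_{0, T_1}(\varphi_1) < \bar V + c/2$.

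Next, use Lemma~\ref{prop: continuity pps cost functional}~\eqref{eq: pp cont1} to choose $\rho_0 > 0$ small enough that $\overline{B_{\rho_0}(0)} \subset D$ and that for every $x \in B_{\rho_0}(0)$ there is a continuous controlled path $\Phi_x$ on some $[0, t_x]$ with $t_x \in [0, 1]$, joining $x$ to $0$ and of entropy cost less than $c/4$. Set $s_0 := 1 + T_1$ (independent of $\rho$). For each $\rho \in (0, \rho_0]$ and each $x \in B_\rho(0)$ define $\phi_x$ by concatenation: $\Phi_x$ on $[0, t_x]$, the time-shifted $\varphi_1(\cdot - t_x)$ on $[t_x, t_x + T_1]$, and on $[t_x + T_1, s_0]$ the trajectory of the uncontrolled ODE $\dot Y = b(Y)$ starting from $z^*$ (trivial control $g \equiv 1$, zero entropy cost). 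Then $\phi_x \in C([0, s_0], \RR^d)$, $\phi_x(t_x + T_1) = z^* \in (\bar D)^c$, and $\JJ_{x, s_0}(\phi_x) < \bar V + 3c/4$.

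Finally, define
\begin{equation*}
G := \{\psi \in \DD([0, s_0], \RR^d) : \psi(t) \in (\bar D)^c \text{ for some } t \in [0, s_0]\},
\end{equation*}
which is open in the Skorokhod $J_1$ topology because $(\bar D)^c$ is open in $\RR^d$. Each $\phi_x$ above lies in $G$, and on the event $\{X^{\e, x} \in G\}$ one has $\sigma^\e(x) \lqq s_0$. Applying Corollary~\ref{chpt2: corol Ldp uniform in compact sets of initial states} to the compact set $K := \overline{B_\rho(0)} \subset D$, and using that $\e > 0$ and $\ln$ is monotone so that $\e \ln \inf_{x} = \inf_{x} \e \ln$, yields
\begin{align*}
\liminf_{\e \to 0} \e \ln \inf_{x \in B_\rho(0)} \bar\PP(\sigma^\e(x) \lqq s_0)
&\gqq \liminf_{\e \to 0} \inf_{x \in K} \e \ln \bar\PP(X^{\e, x} \in G) \\
&\gqq -\inf_{x \in K,\, \psi \in G} \JJ_{x, s_0}(\psi) \gqq -(\bar V + 3c/4) > -(\bar V + c).
\end{align*}

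The main technical obstacle is the extension step at the end of the first paragraph: one must exhibit a near-optimal exit path whose endpoint lies \emph{strictly} outside $\bar D$, in order for the target cylinder set $G$ to be open (which the LDP lower bound requires), while adding only $c/4$ of entropy cost. This is precisely where Proposition~\ref{condition: potential} and the inward-pointing condition on $b$ play their decisive role; once the reference path $\varphi_1$ is in hand, the rest of the argument is routine concatenation and an appeal to the uniform LDP.
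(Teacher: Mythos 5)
Your proposal is correct and follows essentially the same route as the paper: concatenate a cheap controlled path from $x$ to $0$ (via Lemma \ref{prop: continuity pps cost functional}), a near-optimal path from $0$ to a point strictly outside $\bar D$ (using the boundary-crossing extension), pad with the deterministic flow to a common horizon, and apply the uniform LDP lower bound of Corollary \ref{chpt2: corol Ldp uniform in compact sets of initial states} to an open set of exiting paths. The only (harmless) difference is that you take the target set to be the open ``hits $(\bar D)^c$'' event directly, whereas the paper uses a $J_1$-tube $\mathcal{O}$ of radius $d(z,\bar D)/2$ around the constructed paths and argues by contradiction that every element of $\mathcal{O}$ exits $D$; your explicit padding to a fixed $s_0$ and the uniform-in-$x$ cost bound $\bar V+3c/4$ in fact tidy up two small impressions in the paper's own write-up.
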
    

\begin{proof} 
Let $\rho_0 > 0$ be small enough such that 
the inequalities of (\ref{eq: pp cont1}) and (\ref{eq: pp cont2}) in Lemma \ref{prop: continuity pps cost functional} 
are satisfied for $\delta = \frac{c}{2}$ and $\rho \lqq \rho_0$. Hence we may 
choose $x \in B_{\rho}(0)$ and a path 
 $\phi_1^x \in C([0, s_x], \mathbb{R}^d)$ satisfying $\phi_1^x(0) = x$, $\phi_1^x (s_x)= 0$ such that  
\begin{align*}
\JJ_{x, s_x} (\phi_1^x) \lqq \dfrac{c}{2}.
\end{align*}
With the help of (\ref{eq: pp cont2}) in Lemma \ref{prop: continuity pps cost functional} 
and Proposition \ref{condition: potential} 
we may choose $z \in  D^c \cap \text{supp}(\nu)$, $s_z > 0$, $\phi^z_2 \in C([0,s_z], \mathbb{R}^d)$ 
such that $\phi_2^z(0)=0$, $\phi_2^z(s_z)= z$ and 
    \begin{align*}
\JJ_{0,s_z} (\phi^z_2) \lqq \bar{V} + \dfrac{c}{2}.
\end{align*}
Let $\phi_3$ be the solution of the differential equation $\dot{\phi_3}= b(\phi_3)$ with $\phi_3(0)= z$. 
We set  $s_0=s_x+s_z + \delta'$ with $\delta'>0$ such that $\phi_3([0, \delta']) \subset D^c$ and define 
\begin{align*}
 \Phi^x(t) := \begin{cases} \phi_1^x(t)  & \text{ if } t\in [0,  s_x],\\
 \phi_2^z(t- s_x) &  \text{ if } t\in (s_x, s_z + s_x], \\
 \phi_3 (t-s_z -s_x) &  \text{ if }  t\in (s_x+ s_z, s_0]. \end{cases} \end{align*}
 Then the concatenation of the paths yields 
 \begin{align*}
 \JJ_{x, s_0} (\Phi^x) \lqq \JJ_{x, s_x} (\phi_1^x) + \JJ_{0,s_z}(\phi^z_2) +0 \lqq \bar{V} + c.
 \end{align*}
 Let $\Delta = d(z, \bar{D})$ and consider the open set
 \begin{align*}
 \mathcal{O}= \bigcup_{x\in B_{\rho_0}(0)} \{ \psi \in \DD ([0, s_0], \mathbb{R}^d) ~|~ d_{J_1}( \psi , \Phi^x ) < \dfrac{\Delta}{2}  \}.
 \end{align*}
 The constructed path $\Phi^x$ visits $z$ by definition and stays outside of $D$ in the time interval $[s_x+s_z, s_0]$, 
 due to the choice of $z \in D^c $ and the continuity of $\varphi_3$. 
 By definition of $\mathcal{O}$ every path $\psi \in \mathcal{O}$ exits $D$ before time $s_0$. 
 We show this claim by contradiction. Fix $\psi \in \oO$. Let us suppose that $\psi([0,s_0]) \subset D$. 
 This implies that 
 \begin{align} \label{chpt2: eq upper bound contradiction } 
 d(z, \overline{\psi([0,s_0])} )> \Delta.
 \end{align}
  Since $\psi \in \oO$ we have $d_{J_1}(\psi, \Phi^x) < \frac{\Delta}{2}$, that is,  
  there is an increasing homeomorphism $\la: [0,s_0] \ra [0,s_0]$ such that 
 \begin{align*}
 \displaystyle \sup_{t \in [0,s_0]} |\psi(\lambda(t)) - \Phi^x(t)| < \frac{\Delta}{2}.
 \end{align*}
 In particular, $$|\psi(\lambda(s_z + s_x)) - \Phi^x(s_z+ s_x)|= |\psi(\lambda(s_z + s_x)) -z| < \frac{\Delta}{2},$$
 which contradicts \eqref{chpt2: eq upper bound contradiction }. 
 Corollary \ref{chpt2: corol Ldp uniform in compact sets of initial states} yields 
 \begin{align*}
 \liminf_{\e \rightarrow 0} \e \ln \inf_{x\in B_{\rho_0}(0)} \bar \PP (\sigma^\e (x) \lqq s_0) 
 & \gqq \liminf_{\e \rightarrow 0} \e \ln \inf_{x\in B_{\rho_0}(0)} \bar \PP (X^{ \e, x}  \in \mathcal{O} ) \\
 & \gqq  - \sup_{x\in B_{\rho_0}(0) } \inf_{\psi \in \mathcal{O}} \JJ_{x, s_0} (\psi) 
 \gqq -\sup_{x\in B_{\rho_0}(0)} \JJ_{x, s_0} (\Phi^x) \gqq - (\bar{V} + c), 
 \end{align*}
 which finishes the proof.
\end{proof}
\no For fixed $x \in D$ and small $\varepsilon>0$ we show that the probability of $X^{\e,x}$ staying in $D$ in the long run 
 without hitting a small neighborhood 
of $0$ is exponentially negligible. For given $\rho>0$ such that $\bar B_{\rho} (0) \subset D$, we define 
\begin{align} \label{eq: auxiliary first exit time big jumps}
\vt_{\rho}^\e (x) := \inf \{ t \gqq 0~\big|~  |X^{ \e, x}_t| \lqq \rho \text{ or } X^{\e, x}_t \in D^c \}.
\end{align}  
\begin{lemma} \label{chpt2: lemma second lemma upper bound} We have
\begin{align}
\lim_{t \rightarrow \infty} \limsup_{\e \rightarrow 0} \e \ln \sup_{x \in D} \bar \PP (\vartheta^\e_{\rho} (x) > t) = - \infty. 
\label{eq: long nonexit is exp neg}
\end{align}
\end{lemma}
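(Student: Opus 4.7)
The strategy reduces the claim to a uniform one-step bound on a fixed interval $[0,T_0]$ and then iterates via the Markov property. First, by the coercivity estimate of Hypothesis \ref{condition: det dynamical system}.1 applied with $y_2=0$ and $b(0)=0$,
\[
\frac{d}{dt}|X^{0,x}_t|^2 = 2\langle b(X^{0,x}_t), X^{0,x}_t\rangle \leq -2 c_1 |X^{0,x}_t|^2,
\]
whence $|X^{0,x}_t| \leq \diam(D)\, e^{-c_1 t}$ uniformly in $x \in \bar D$. Fix $T_0 := c_1^{-1} \ln(2\,\diam(D)/\rho)$ so that $|X^{0,x}_{T_0}| \leq \rho/2$ for every $x \in \bar D$.

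On the event $\{\vartheta^\e_\rho(x) > T_0\}$ the trajectory $X^{\e,x}$ stays inside $D\setminus B_\rho(0)$ on $[0,T_0]$, so $|X^{\e,x}_{T_0}| > \rho$, which forces $\sup_{t\in [0,T_0]}|X^{\e,x}_t - X^{0,x}_t| \geq \rho/2$. The corresponding closed set
\[
F_x := \Bigl\{\varphi \in \DD([0,T_0], \RR^d) \;:\; \sup_{t\in [0,T_0]}|\varphi(t) - X^{0,x}_t| \geq \rho/2\Bigr\}
\]
does not contain the unique zero $X^{0,x}$ of the good rate function $\JJ_{x,T_0}$, hence $\inf_{F_x}\JJ_{x,T_0}>0$ by compactness of the level sets. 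To upgrade this pointwise positivity to a uniform lower bound, I would use continuity of $x\mapsto X^{0,\cdot}$ on $\bar D$ to find $\delta>0$ such that $|x-y|\leq \delta$ implies $\sup_t|X^{0,x}_t - X^{0,y}_t| \leq \rho/4$, cover the compact set $\bar D$ by finitely many balls $B_\delta(y_1),\dots,B_\delta(y_N)$, and observe that for every $x\in B_\delta(y_i)\cap \bar D$ one has $F_x \subseteq \tilde F_{y_i} := \{\sup_t|\varphi - X^{0,y_i}_t| \geq \rho/4\}$. Applying the uniform LDP of Corollary \ref{chpt2: corol Ldp uniform in compact sets of initial states} to each $x$-independent closed set $\tilde F_{y_i}$ over $\bar B_\delta(y_i)\cap \bar D$ then produces some $\alpha>0$ with
\[
\limsup_{\e\to 0}\e\ln \sup_{x\in \bar D}\bar\PP(\vartheta^\e_\rho(x) > T_0) \leq -\alpha.
\]

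Finally, I would iterate using the Markov property at the deterministic times $kT_0$. On $\{\vartheta^\e_\rho(x)>T_0\}$ the state $X^{\e,x}_{T_0}$ lies in $\bar D\setminus B_\rho(0)$, and conditioning on $\fF_{T_0}$ gives $\bar\PP(\vartheta^\e_\rho(x)>(k+1)T_0) \leq q_\e\cdot \bar\PP(\vartheta^\e_\rho(x)>kT_0)$ with $q_\e := \sup_{y\in\bar D}\bar\PP(\vartheta^\e_\rho(y)>T_0)$, whence $\sup_{x\in D}\bar\PP(\vartheta^\e_\rho(x)>nT_0) \leq q_\e^n$. For arbitrary $t$, set $n := \lfloor t/T_0\rfloor$ to obtain $\e\ln\sup_x\bar\PP(\vartheta^\e_\rho(x)>t) \leq n\cdot \e\ln q_\e$; taking $\limsup_{\e\to 0}$ and then $t\to\infty$ yields the claimed limit $-\infty$. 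I expect the main technical obstacle to be the passage from the pointwise positivity to the uniform rate $\alpha>0$: the covering argument above sidesteps the need to establish joint lower semicontinuity of $(x,\varphi)\mapsto \JJ_{x,T_0}(\varphi)$ by relying instead on Corollary \ref{chpt2: corol Ldp uniform in compact sets of initial states} together with the continuity of the deterministic flow on $\bar D$.
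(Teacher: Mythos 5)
Your route differs genuinely from the paper's: the paper estimates $\bar\PP(\vartheta^\e_\rho(x)>t)$ directly on the growing horizon $[0,t]$ through the closed set of paths confined to $\overline{D\setminus B_\rho(0)}$ and shows, via a path-splitting and compactness/contradiction argument, that the infimum of the rate over that set diverges as $t\to\infty$; you instead prove a one-step bound on a fixed horizon $T_0$ and iterate with the Markov property. The contraction estimate from Hypothesis \ref{condition: det dynamical system}.1, the inclusion $\{\vartheta^\e_\rho(x)>T_0\}\subset\{X^{\e,x}\in F_x\}$, the covering of $\bar D$, and the Markov iteration are all sound (and the $J_1$-closedness of $F_x$, which you use silently, does hold, since the sup-distance to a fixed continuous path is $J_1$-lower semicontinuous).

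The gap is in the step where you claim that applying Corollary \ref{chpt2: corol Ldp uniform in compact sets of initial states} to the $x$-independent sets $\tilde F_{y_i}$ ``produces some $\alpha>0$'', and in particular in your closing assertion that the covering sidesteps joint lower semicontinuity of $(x,\varphi)\mapsto\JJ_{x,T_0}(\varphi)$. It does not: the uniform LDP only yields the upper bound $-\inf_{x\in K_i}\inf_{\varphi\in\tilde F_{y_i}}\JJ_{x,T_0}(\varphi)$, and the strict positivity of this \emph{double} infimum is exactly what remains to be proved; pointwise positivity for each fixed $x$ (your level-set compactness argument) does not pass to the infimum over $x\in K_i$. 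A priori there could be $x_n\in K_i$ and controls $g_n$ with $\eE_{T_0}(g_n)\to 0$ while $U^{g_n}(\cdot\,;x_n)\in\tilde F_{y_i}$ for all $n$. Closing this requires precisely the joint ingredient you claim to avoid, e.g.: note $g_n\in\mathfrak{S}^{M_0}$ with $\mathfrak{S}^{M_0}$ compact, extract $g_{n_k}\to g$ and $x_{n_k}\to x\in K_i$, use lower semicontinuity of $\eE_{T_0}$ to get $g\equiv 1$, and invoke the convergence $\gG^{0,x_{n_k}}(g_{n_k})\to\gG^{0,x}(g)=X^{0,x}$ (statement 1 in the proof of Proposition \ref{chpt2: proposition uniform large dev principle}) plus closedness of $\tilde F_{y_i}$ to contradict $\sup_{t\lqq T_0}|X^{0,x}_t-X^{0,y_i}_t|\lqq\delta$. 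Alternatively, a quantitative route works: Hypothesis \ref{condition: det dynamical system}.1 and the bound \eqref{eq: uniform bound controlled odes} give $\sup_{t\lqq T_0}|U^{g}(t;x)-X^{0,x}_t|\lqq C\int_0^{T_0}\int_{\RR^d\backslash\{0\}}|z||g(s,z)-1|\nu(dz)ds$, so it suffices that small entropy forces the right-hand side to be small; but this needs a refinement of Step 2 of Lemma \ref{lemma: integrability controls}, whose stated bound contains a constant term that does not vanish as the entropy tends to $0$. Either repair is available with the paper's tools, so your architecture stands, but as written the uniform rate $\alpha>0$ --- the quantitative heart of the lemma --- is asserted rather than proved.
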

\begin{proof}
Let us fix $\rho>0$. For $t \gqq 0$ we define the subsets of $\DD([0,t], \mathbb{R}^d)$ 
\begin{align*}
\mathcal{G}_t &:= \Big  \{ \Phi \in  \DD ([0,t], \mathbb{R}^d)~|~ \Phi(s) \in \overline{D \backslash B_\rho (0)} \quad \text{for all } s \in [0,t] \Big \} \mbox{ and } \\
 \tilde \gG_t &:=  \Big \{  \Phi \in  \DD ([0,t], \mathbb{R}^d)~|~ \Phi(s) \in \overline{D \backslash  B_\rho (0)} \text{ for all  } s \in [0,t]  \\ 
 &\qquad  \text{ except in a countable number of points} \Big  \}.
\end{align*}
In Lemma \ref{claim: first topological claim} in Subsection \ref{subsec: topology} of the appendix 
it is shown that due to right continuity we have $\tilde \gG_t = \gG_t$ and $\tilde \gG_t$ is a closed 
set in $\DD([0,t], \RR^d)$ with respect to the Skorokhod topology. 
By the definition of $\gG_t$ and Corollary~\ref{chpt2: corol Ldp uniform in compact sets of initial states} we have
\begin{align}
\limsup_{\e \rightarrow 0} \e \ln \sup_{x \in D} \bar \PP (\vt^\e_{\rho} (x)  > t) 
& \lqq \limsup_{\e \rightarrow 0} \e \ln \sup_{  x \in \overline{D\backslash B_\rho (0) }}  \bar \PP (\vt^\e_{\rho} (x)  > t) 
\lqq -\inf_{  x \in \overline{D\backslash B_\rho (0)}} \displaystyle \inf_{\psi \in \tilde \gG_t} \JJ_{x, t} (\psi) \non\\
&= -\inf_{  x \in \overline{D\backslash B_\rho (0)}} \displaystyle \inf_{\psi \in \mathcal{G}_t} \JJ_{x, t} (\psi) 
= - \inf_{\psi \in \mathcal{G}_t} \JJ_{\psi (0), t} (\psi).\label{eq: step 1}
\end{align}
\paragraph*{Claim:} We have 
\begin{align}
\lim_{t \rightarrow  \infty} \inf_{\psi \in \mathcal{G}_t} \JJ_{\psi(0),t} (\psi) =  \infty. \label{eq: step2}
\end{align}
Let $(\varphi_t)_{t \gqq 0}$ be the dynamical system associated to $\dot{\varphi}_t = b(\varphi_t)$ on $\RR^d$. 
Due to Hypothesis \ref{condition: det dynamical system} for any  $x \in D \backslash \bar B_\rho (0)$
there exists $t_x \gqq 0$ such that $\varphi(t_x) \in B_{\frac{\rho}{2}} (0)$. We define
the open neighborhood $O_x := \varphi^{-1} (B_{\frac{\rho}{2}} (0))$ of $x$ in $\RR^d$. 
By compactness there are $k \in \NN$ and  $x_1,\dots,x_k \in D \backslash \bar B_{\rho} (0)$ such that 
$\bigcup_{i=1}^{k} O_{x_i} \supset ( D \backslash \bar B_{\rho}(0)) $. 
We set $s= t_{x_1} \vee \dots \vee t_{x_k} $. 
Before time $s$ any path that solves $\dot{\varphi}_t = b(\varphi_t)$, 
with initial condition in $D \backslash \bar B_{\rho}(0)$ hits $B_{\frac{\rho}{2}}(0)$. 
We argue by contradiction. Assume that
\begin{align}
\lim_{t \rightarrow + \infty} \inf_{\psi \in \mathcal{G}_t} \JJ_{\psi(0),t} (\psi) < \infty. \label{eq: contradiction} 
\end{align}
Let us fix $M>0$ such that for any $n \in \NN$ there exists $\psi^n \in \gG_{ns}$ 
verifying $\JJ_{\psi^n (0), ns} (\psi^n) \lqq M$. 
For $k=0,\dots,n-1$ let
\[ 
\psi^{n,k} (t) := \psi^n (k\cdot (s-t)), t \in [0,s].
\]
Hence $\psi_{n,k} \in \mathcal{G}_s$ and
\begin{align} \label{eq: estimate for lsc argument}
M \gqq \JJ_{\psi^n (0), ns} (\psi^n) = \sum_{i=0}^{n-1} \mathbb{J}_{ \psi^n (ks), s} (\psi^{n,k}) 
\gqq n \displaystyle \min_{0 \lqq k \lqq n-1}  \mathbb{J}_{ \psi^{n,k} (0), s} (\psi^{n,k}).
\end{align}
We finally show the existence of a sequence $(\psi^n)_{n \in \NN}$ in $\mathcal{G}_t$ such that
\[ \displaystyle \lim_{n \rightarrow \infty} \mathbb{J}_{\psi^n (0), s} (\psi^n) = 0.\]
First we see that the set 
$$
\{ \psi \in \DD([0,s], \mathbb{R}^d) ~|~\psi(0) \in \overline{D\backslash B_{\rho}(0)}, 
\mathbb{J}_{\psi(0), s} (\psi) \lqq 1 \}
$$ 
is a closed subset of the compact set 
$\{ \psi \in \DD([0,s], \mathbb{R}^d) ~|~\mathbb{J}_{\psi(0), s} (\psi) \lqq 1 \}$. 
The compactness comes from the fact that $\JJ_{\psi(0), s}$ is a good rate function with respect 
to the Skorokhod topology. 
Hence the sequence $(\psi^n)_{n \in \NN}$ has a limit point 
in $\mathcal{G}_s$ which we call $\bar{\psi}$. 
Since $\mathbb{J}_{\psi(0), s}= \displaystyle \inf_{x \in \mathbb{R}^d} \mathbb{J}_{x,s} $ is lower 
semicontinuous and due to (\ref{eq: estimate for lsc argument}) it follows that 
$\mathbb{J}_{\bar{\psi} (0), s} (\bar{\psi})= 0$. Due to the definition of rate function in 
(\ref{eq: rate function}), the structure of the controlled paths in (\ref{eq: controlled ODE}) 
and (\ref{eq: entropy functional}) this implies that $\bar{\psi}$ solves 
$\dot{\bar \psi}_t = b(\bar \psi_t)$ with $\bar{\psi} (0) \in D \backslash \bar B_{\rho} (0)$. 
Therefore $\bar{\psi}$ reaches $B_{\frac{\rho}{2}} (0)$ before time $s$, 
which contradicts $\bar{\psi} \in \mathcal{G}_s$ and thus assumption 
\eqref{eq: contradiction}. Combining inequality 
\eqref{eq: step 1} and \eqref{eq: step2} yields the desired result \eqref{eq: long nonexit is exp neg}. 
\end{proof}

\begin{theorem} \label{chpt2: thm upper bound first exit time}
For $x \in D$ and $\delta > 0$  we have
\begin{align}\label{eq: exit time upper bound}
&\liminf_{\e \rightarrow 0} \e \ln \bar  \PP (\sigma^\e (x) < e^{\frac{\bar{V}+ \delta}{\e}}) = 1 \text{ and } \nonumber \\
& \limsup_{\varepsilon \ra 0} \varepsilon \ln \bar \EE [\sigma^\varepsilon(x)] \lqq \bar V + \frac{\delta}{2}.
\end{align}
\end{theorem}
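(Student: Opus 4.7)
\medskip
\textbf{Proof plan for Theorem \ref{chpt2: thm upper bound first exit time}.}

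The plan is to combine the two preparatory lemmas via a standard cycle (renewal) argument based on the strong Markov property of $X^{\varepsilon,x}$, following the classical Freidlin–Wentzell scheme adapted to the Skorokhod setting. Fix $\delta>0$ and choose $c\in(0,\delta/2)$. By Lemma \ref{chpt2: lemma first lemma upper bound} there exist $\rho_0>0$ and $s_0>0$ such that for every $\rho\in(0,\rho_0]$ and all sufficiently small $\varepsilon$,
\begin{equation*}
p_\varepsilon \;:=\; \inf_{y\in B_\rho(0)} \bar\PP(\sigma^\varepsilon(y)\leqslant s_0) \;\geqslant\; e^{-(\bar V + c)/\varepsilon}.
\end{equation*}
Shrinking $\rho$ if needed so that $\bar B_\rho(0)\subset D$, Lemma \ref{chpt2: lemma second lemma upper bound} provides some $t_0>0$ such that $\sup_{y\in D}\bar\PP(\vartheta^\varepsilon_\rho(y)>t_0)\leqslant 1/2$ for all $\varepsilon$ small. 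These are the two ingredients we iterate.

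Next, starting at $X^{\varepsilon,x}_0=x\in D$, I would define a cycle structure using the stopping times from \eqref{eq: auxiliary first exit time big jumps}. Set $\tau_0:=0$ and, as long as the process has not yet exited $D$, at each cycle wait $t_0$ units of time and then, if the process has reached $\bar B_\rho(0)$ by then, give it $s_0$ further units to exit $D$ (the strong Markov property of Theorem \ref{thm: existence sol sde} is used at each restart). One cycle has length at most $t_0+s_0$, and the probability of \emph{successfully exiting during that cycle} is bounded below by
\begin{equation*}
\bar\PP(\vartheta^\varepsilon_\rho\leqslant t_0)\cdot\inf_{y\in B_\rho(0)}\bar\PP(\sigma^\varepsilon(y)\leqslant s_0)\;\geqslant\;\tfrac{1}{2}\,p_\varepsilon\;\geqslant\;\tfrac{1}{2}\,e^{-(\bar V+c)/\varepsilon}.
\end{equation*}
Hence the number of cycles before exit is stochastically dominated by a geometric random variable with parameter $\tfrac12 p_\varepsilon$, and by independence across cycles (strong Markov) one obtains
\begin{equation*}
\bar\EE[\sigma^\varepsilon(x)] \;\leqslant\; (t_0+s_0)\cdot\frac{2}{p_\varepsilon} \;\leqslant\; 2(t_0+s_0)\,e^{(\bar V+c)/\varepsilon}.
\end{equation*}
Taking logarithms, multiplying by $\varepsilon$, and letting $\varepsilon\to 0$ gives $\limsup_{\varepsilon\to 0}\varepsilon\ln\bar\EE[\sigma^\varepsilon(x)]\leqslant \bar V+c\leqslant \bar V+\delta/2$, which is the second claim.

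Finally, for the probability statement I would apply Markov's inequality to the expectation just obtained: with $T_\varepsilon=e^{(\bar V+\delta)/\varepsilon}$,
\begin{equation*}
\bar\PP(\sigma^\varepsilon(x)\geqslant T_\varepsilon)\;\leqslant\; \frac{\bar\EE[\sigma^\varepsilon(x)]}{T_\varepsilon}\;\leqslant\; 2(t_0+s_0)\,e^{-(\delta-c)/\varepsilon}\;\longrightarrow\;0,
\end{equation*}
since $c<\delta$. This yields $\bar\PP(\sigma^\varepsilon(x)<e^{(\bar V+\delta)/\varepsilon})\to 1$, which (reading the left-hand side of \eqref{eq: exit time upper bound} as $\varepsilon\ln$ of a quantity tending to $1$) gives the desired limit. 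The subtle point to handle carefully is the conditional independence of the cycles: one must apply the strong Markov property at each stopping time $\tau_k$ and then take the infimum over the starting position inside $\bar D$ so that the bounds from the two lemmas apply uniformly; this is where the uniformity in Corollary \ref{chpt2: corol Ldp uniform in compact sets of initial states} (hidden inside both lemmas) is essential. The remaining verification that $\rho$ can indeed be taken small enough for $\bar B_\rho(0)\subset D$ simultaneously with the hypotheses of both lemmas is routine given Lemma \ref{prop: continuity pps cost functional}.
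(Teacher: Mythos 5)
Your proposal is correct and follows essentially the same route as the paper: both combine Lemma \ref{chpt2: lemma first lemma upper bound} and Lemma \ref{chpt2: lemma second lemma upper bound} via the strong Markov property to obtain a uniform lower bound $\inf_{x\in D}\bar\PP(\sigma^\varepsilon(x)\leqslant T)\geqslant c\,e^{-(\bar V+\delta/2)/\varepsilon}$, iterate it over time blocks (your geometric domination is the paper's recursion $\sup_x\bar\PP(\sigma^\varepsilon(x)>kT)\leqslant(1-q^\varepsilon)^k$), deduce $\bar\EE[\sigma^\varepsilon(x)]\leqslant T/q^\varepsilon$, and conclude the probability statement by Chebyshev/Markov. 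The only differences are cosmetic (you bound the $\vartheta^\varepsilon_\rho$-term by $1/2$ where the paper keeps $1-e^{-r/\varepsilon}$), so no gap.
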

\begin{proof} The proof consists of two steps. 
\begin{claim} \label{chpt2: claim upper bound} 
For any $\delta > 0$ there are $T > 0$, $c>0$ and $\e_0 \in (0, 1] $ such that $\e \in (0, \e_0]$ implies 
\begin{align*}
\inf_{x \in D} \bar \PP (\sigma^{\e} (x) \lqq T) \gqq c e^{- \frac{\bar{V}+ \frac{\delta}{2}}{\e}}.
\end{align*}
\end{claim}
\no We first observe that by Lemma  \ref{chpt2: lemma first lemma upper bound} 
for every $\delta>0$ there are $t_0 > 0$ and $\rho> 0$ such that
\begin{align*}
\liminf_{\e \rightarrow 0} \e \ln \inf_{x\in B_{\rho}(0)} \bar  \PP (\sigma^{\e} (x) \lqq t_0) > - (\bar{V} + \frac{\delta}{4}). 
\end{align*}
For the fixed value $\rho>0$ and any $r >0$ Lemma \ref{chpt2: lemma second lemma upper bound} yields $t_1 > 0$ and $\varepsilon_0 \in (0,1]$ such that  $\varepsilon < \varepsilon_0$ implies
\begin{align*}
\e \ln \sup_{x \in D} \bar  \PP (\vt^\e_\rho(x) > t_1) < -r.
\end{align*}
In addition, let $\tilde c>0$ and $\e_0\in (0, 1]$ sufficiently small 
such that for $\e \in (0,  \e_0]$ it follows $1 - e^{- \frac{r}{\e}} > \tilde c e^{- \frac{\delta}{4 \e}}$. 
Since  $\{  \vt^\e_\rho(x) < \sigma^{\e} (x) \}= \{ X^{ \e, x}_{ \vt^\e_\rho(x) } \in \bar B_{\rho}(0) \}$ we have on this event
\begin{align*}
\sigma^{\e} (x)  = \vt^\e_\rho(x) + \sigma^{\e} (X^{ \e, x}_{\vt^\e_\rho(x)}) \circ \Theta_{\vt^\e_\rho(x) }, 
\end{align*}
where $\Theta_{s}$ is  the canonical shift  by time $s$ on the path space $\DD([0, \infty), \RR^d)$. 
Using the homogeneous strong Markov property of $X^{ \e, x}$ we obtain  
for any fixed $\e \in(0, \e_0]$ and $x \in D$ 
\begin{align*}
\bar \PP (\sigma^{\e} (x)\lqq t_0 + t_1) & \gqq  \bar \PP\big (\vt^\e_{\rho} (x) 
\lqq t_1 \text{ and } \sigma^{\e}(X^{ \e, x}_{\vt^\e_{\rho} (x) })  \lqq t_0 \big ) \\
& = \bar  \PP \big (\vt^\e_{\rho} (x) \lqq t_1\big ) \bar \PP \big (  \sigma^{\e}(X^{ \e, x}_{\vt^\e_{\rho} (x)}) 
\lqq t_0 | \vt^\e_{\rho} (x) \lqq t_1 \big ) \\
& \gqq \inf_{y \in D} \bar \PP (\vt^\e_{\rho} (y) \lqq t_1) \inf_{x\in B_{\rho}(0)} \bar  \PP (\sigma^{\e} (x) \lqq t_0) \\
& \gqq  c e^{- \frac{\bar{V} + \frac{\delta}{4}}{\e}} \tilde c e^{- \frac{\delta}{4 \e}} 
\gqq  c \tilde c e^{- \frac{\bar{V} + \frac{\delta}{4}}{\e}}  (1 - e^{- \frac{r}{\e}})  = c \tilde c e^{- \frac{\bar{V}+ \frac{\delta}{2}}{\e}}.
\end{align*}
\no Setting $T=t_0 + t_1$ and renaming the constants we finish the proof of Claim \ref{chpt2: claim upper bound}. 
\medskip
\paragraph{Step 2: } We continue with the proof of the limit \eqref{eq: exit time upper bound} and set $q^\e := \inf_{x \in D} \bar  \PP (\sigma^{\e} (x)\lqq T)$ for the time $T>0$ given in Claim \ref{chpt2: claim upper bound}.  
Claim \ref{chpt2: claim upper bound} yields $q^\e>0$ for all $\e \in (0, \e_0]$. 
For any  $k \in \NN$ and $x \in D$ we consider 
the family of events $\{ \sigma^{\e} (x) > kT\}$ for which we derive the following recursion
\begin{align*}
 \bar \PP \big  (\sigma^{\e} (x) > (k + 1)T \big ) 
 &=\Big  (1-  \bar \PP \big (  \sigma^{\e} (x)\lqq (k+1)T | \sigma^{\e} (x) > kT \big ) \Big  ) \cdot
\bar \PP \big  (\sigma^{\e} (x) > kT \big ) \\
& \lqq (1-q^\e) \cdot \bar \PP \big (\sigma^{\e} (x) > kT\big ), \quad k \in \NN.
\end{align*}
Solving the recursion above in $k \in \NN$ we obtain for any $\e \in (0, \e_0]$ 
\begin{align*}
\sup_{x \in D} \bar  \PP (\sigma^{\e} (x) > kT) \lqq (1-q^\e)^k, \quad k \in \NN.
\end{align*}
This implies the following bound 
\begin{align*}
\sup_{x \in D} \bar \EE [\sigma^{\e} (x) ]  &=  \displaystyle \sup_{x \in D} T \int_0^\infty  \bar \PP (\sigma^\e(x) > Ts) ds 
\lqq T \displaystyle \sup_{x \in D}  \sum_{k= 0}^{\infty}  \bar  \PP (\sigma^{\e} (x) > kT) \lqq  T \sum_{k=0}^{\infty} (1-q^\e)^k = \frac{T}{q^\e}.
\end{align*}
Since we have $q^\e \gqq e^{- \frac{\bar{V}+ \frac{\delta}{2}}{\e}}$ for $\e \in (0, \e_0]$ we obtain 
\begin{align*}
\sup_{x \in D} \bar \EE [\sigma^{\e} (x)] \lqq T e^{\frac{\bar{V}+ \frac{\delta}{2}}{\e}}.
\end{align*}
Chebyshev's inequality implies for all $x \in D$ and $\e \in (0, \e_0]$,
\begin{align} \label{eq: to use in tthe first exit time full perturbation1}
 \bar \PP ( \sigma^{\e} (x)  \gqq e^{\frac{\bar{V}+ \delta}{\e}}) 
 \lqq e^{-\frac{\bar{V}+ \delta}{\e}} \bar \EE [\sigma^{\e} (x)] 
 \lqq  e^{- \frac{\delta}{2 \e}}.
 \end{align}
 Sending $\e \rightarrow 0$ we conclude.
\end{proof}

\bigskip

\begin{lemma} \label{chpt2: lemma first lemma in lower bound first exit time} 
 For any $x \in D$ and $\rho > 0$ such that $ \bar{B}_\rho (0) \subset D$ we have
\begin{align*}
\lim_{\e \rightarrow 0} \bar \PP (X_{\vt^\e_{\rho}(x) }^{\e, x} \in \bar{B}_{\rho} (0)) = 1.
\end{align*}
\end{lemma}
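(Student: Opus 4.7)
The plan is to show that with probability tending to $1$ the trajectory $X^{\e,x}$ stays uniformly close to the deterministic solution $X^{0,x}$ on a fixed horizon $[0,T_\rho]$ and that, on this good event, $X^{\e,x}$ necessarily hits $\bar B_\rho(0)$ before it can leave $D$. Consequently the complementary event $\{X^{\e,x}_{\vt^\e_\rho(x)}\in D^c\}$ is asymptotically negligible.

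More precisely, I would first use Hypothesis~\ref{condition: det dynamical system} (exponential attraction of $0$) to fix $T_\rho<\infty$ with $X^{0,x}_{T_\rho}\in B_{\rho/2}(0)$. By Hypothesis~\ref{condition: domain} together with $x\in D$, the compact continuous curve $\{X^{0,x}_t:t\in[0,T_\rho]\}$ lies in the open set $D$, so $\delta_1:=\inf_{t\in[0,T_\rho]}\dist(X^{0,x}_t,\partial D)>0$. Taking $r:=\min(\delta_1,\rho/2)>0$, on the event
\[
A_\e:=\Big\{\sup_{t\in[0,T_\rho]}|X^{\e,x}_t-X^{0,x}_t|<r\Big\}
\]
one has $X^{\e,x}_t\in D$ for every $t\in[0,T_\rho]$ (since $|X^{\e,x}_t-X^{0,x}_t|<\delta_1\le\dist(X^{0,x}_t,\partial D)$) and $|X^{\e,x}_{T_\rho}|<\rho$, so $\vt^\e_\rho(x)\le T_\rho$ and, since the whole path on $[0,T_\rho]$ remained inside $D$, $X^{\e,x}_{\vt^\e_\rho(x)}\in\bar B_\rho(0)$.

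It then remains to show $\bar\PP(A_\e^c)\to 0$. Since $X^{0,x}$ is continuous, the functional $\psi\mapsto\sup_{t\in[0,T_\rho]}|\psi(t)-X^{0,x}_t|$ is $J_1$-continuous on $\DD([0,T_\rho],\RR^d)$ (by a short time-change argument using uniform continuity of $X^{0,x}$), so $F_r:=\{\psi:\sup_t|\psi(t)-X^{0,x}_t|\ge r\}$ is Skorokhod-closed. The LDP upper bound of Theorem~\ref{thm: LDP full process} yields
\[
\limsup_{\e\ra 0}\e\ln\bar\PP(A_\e^c)\le-\inf_{\psi\in F_r}\JJ_{x,T_\rho}(\psi),
\]
and it suffices to argue this infimum is strictly positive. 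Since $\JJ_{x,T_\rho}(X^{0,x})\le\eE_{T_\rho}(\mathbf{1})=0$ and $X^{0,x}\notin F_r$, I would reduce the claim to the uniqueness of the zero of $\JJ_{x,T_\rho}$: if $\psi^*\in F_r$ satisfied $\JJ_{x,T_\rho}(\psi^*)=0$, a minimizing sequence $g_n\in\mathfrak{S}_{\psi^*,x}$ with $\eE_{T_\rho}(g_n)\to 0$ would admit a vague limit $g$ with $\eE_{T_\rho}(g)=0$ by lower semicontinuity of the entropy, hence $g\equiv 1$ $\nu\otimes ds$-a.e.\ (as $u\ln u-u+1\ge 0$ vanishes only at $u=1$), and Proposition~\ref{proposition: first condition LDP big jumps} would force $\psi^*=U^{g_n}(\cdot;x)\to X^{0,x}$, contradicting $\psi^*\in F_r$.

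The main obstacle is arranging the argument so that the uniform-distance event $A_\e^c$ is covered by a Skorokhod-closed set on which the good rate function is bounded below; once $J_1$-continuity of $\psi\mapsto\sup_t|\psi(t)-X^{0,x}_t|$ at the continuous trajectory $X^{0,x}$ is recorded, the uniqueness of the rate-function zero (and hence the quantitative exponential decay of $\bar\PP(A_\e^c)$) is essentially automatic.
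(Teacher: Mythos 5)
Your argument is correct, but it follows a genuinely different route from the paper's. You reduce the lemma to the LDP of Theorem \ref{thm: LDP full process}: after fixing $T_\rho$ and the margin $r$, you observe that the complement of the good event is contained in the $J_1$-closed set $F_r$ (closedness holds precisely because the reference path $X^{0,x}$ is continuous, so the uniform distance to it is a $J_1$-continuous functional), apply the LDP upper bound, and then show $\inf_{F_r}\JJ_{x,T_\rho}>0$ by identifying $X^{0,x}$ as the unique zero of the rate function --- via compactness of the sublevel sets $\mathfrak{S}^M$, lower semicontinuity of $\eE_{T_\rho}$ along vague limits, the fact that $\ell(u)=0$ only at $u=1$, and Proposition \ref{proposition: first condition LDP big jumps}; note that passing from ``no zero of $\JJ_{x,T_\rho}$ in $F_r$'' to ``the infimum over $F_r$ is strictly positive'' uses the goodness of the rate function (a minimizing sequence in the closed set $F_r$ has a limit point there), a step you should make explicit. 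The paper avoids the LDP altogether: it compares $X^{\e,x}$ with its mean and the mean with $X^{0,x}$ through an It\^o/second-moment estimate (using Hypotheses \ref{condition: det dynamical system}, \ref{condition: on the multiplicative coefficient} and $\int|z|^2\nu(dz)<\infty$), and controls the martingale fluctuation by the Bernstein-type inequality of \cite{DZ01} conditioned on a quadratic-variation bound, the latter handled by Chebyshev with $\lambda_\e=\sqrt{\e}$, giving a bound of order $e^{-c/\sqrt{\e}}+C\sqrt{\e}$. Your route buys an exponentially small bound of order $e^{-c/\e}$ on the bad event and recycles machinery already established in Section \ref{sec: the LDP}, at the price of the extra topological and variational steps (closedness of $F_r$, uniqueness of the rate-function zero); the paper's route is more elementary and self-contained, yields only a polynomial rate --- which suffices, since the lemma only asserts convergence of the probability to $1$ --- and parallels the Bernstein technique reused in Lemma \ref{chpt2: lemma second lemma lower bound first exit time}.
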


\begin{proof} 
We fix $\rho>0$ and  $x \in D \backslash \bar B_{\rho} (0).$ Otherwise the result is trivial. 
Due to Hypothesis \ref{condition: det dynamical system}.1 there exists $T>0$ such that 
$X^{0,x}_t \in B_{\frac{\rho}{2}}$ for all $t \gqq T$. Hypothesis \ref{condition: domain} yields
\begin{align*}
\Delta := \rho \wedge \text{dist} \Big ( \{X^{0,x}_t ~|~ t \in [0,T] \}, D^c \Big ) >0.
\end{align*}
Hence it follows that 
\begin{align*}
\Big \{ 
X^{\e,x}_{\vt^\e_{\rho}(x)} \in D^c \Big \} 
&\subset \Big \{ \displaystyle \sup_{t\in [0, T \wedge \vartheta^\e_\rho(x)]} |X^{ \e, x}_t - X^{0,x}_t | > \frac{\Delta}{2} \Big \} \\
&\subset 
\Big \{ \displaystyle \sup_{t\in [0, T \wedge \vartheta^\e_\rho(x)]} |X^{ \e, x}_t - \bar\EE[X^{\e,x}_t] | > \frac{\Delta}{4} \Big \} 
\cup \Big \{ \displaystyle \sup_{t\in [0, T \wedge \vartheta^\e_\rho(x)]} 
|\bar\EE[X^{ \e, x}_t] - X^{0,x}_t | > \frac{\Delta}{4} \Big \}.
\end{align*}
We first show that for $\e$ sufficiently small the second event is empty. Indeed, 
\begin{align*}
|\bar\EE[X^{ \e, x}_t] - X^{0,x}_t |^2  
&= |\bar\EE[X^{ \e, x}_t - X^{0,x}_t] |^2 \lqq \bar\EE[|X^{ \e, x}_t - X^{0,x}_t|^2] \mbox{ for all }t\gqq 0,
\end{align*}
and by It\^o's formula we have for $t\lqq T$
\begin{align*}
\bar\EE[|X^{\e, x}_{t \wedge \vt^\e_\rho(x)}  -  X^{0,x}_{ \wedge \vt^\e_\rho(x)}|^2 ]
&= 2 \bar\EE\big[\int_0^{ \wedge \vt^\e_\rho(x)} \lgl X^{\e, x}_s -  X^{0,x}_s, b(X^{\e, x}_s) - b(X^{0,x}_s) \rgl ds\big] \\
&\qquad + \bar\EE\big[\int_0^{t \wedge \vt^\e_\rho(x)} |\e G(X^{\e,x}_{s-}, z)|^2 \frac{1}{\e} \nu(ds) ds\big] \\
&\lqq- 2c_1  \bar\EE\int_0^{ t\wedge \vt^\e_\rho(x)} |X^{\e, x}_s -  X^{0,x}_s|^2 ds] \\
&\qquad + \e \bar\EE[\int_0^{t \wedge \vt^\e_\rho(x)} \int_{\RR^d} L (1+|X^{\e,x}_{s-}, z)|^2) \nu(dz) ds].
\end{align*}
We  the  estimate 
\begin{align*}
\bar\EE[|X^{\e, x}_{t \wedge \vt^\e_\rho(x)} -  X^{0,x}_{t \wedge \vt^\e_\rho(x)}|^2 ]
&\lqq \e T L (1+\diam(D)^2)\int_{\RR^d} |z|^2 \nu(dz)
\end{align*}
and hence for all $\e < \frac{\Delta}{4} C$, $C= \big(T L (1+\diam(D)^2)\int_{\RR^d} |z|^2 \nu(dz)\big)^{-1}$ we have 
\begin{align*}
|\bar\EE[X^{ \e, x}_t] - x | \lqq \frac{\Delta}{4}. 
\end{align*}
Therefore it follows for any $\lambda>0$ that
\begin{align} \label{eq: lemma 17- initial estimate0}
&\bar \PP \Big ( X^{\e,x}_{\vt^\e_{\rho}(x)} \in D^c \Big ) 
\lqq \bar \PP \Big (  \displaystyle \sup_{t\in [0, T \wedge \vartheta^\e_\rho(x)]} |X^{ \e, x}_t - \bar\EE[X^{\e,x}_t] | > \frac{\Delta}{4} \Big ) \nonumber \\
& \lqq \bar \PP \big ( \displaystyle \sup_{t\in [0, T \wedge \vartheta^\e_\rho(x)]} |X^{ \e, x}_t - \bar\EE[X^{\e,x}_t] | > \frac{\Delta}{4}~|~ [X^{\e,x} - \bar\EE[X^{\e,x}]]_{T \wedge \vartheta_\rho^\e(x)} \lqq \lambda \big )\nonumber\\
&\qquad + \bar \PP \big ( [X^{\e,x}- \bar\EE[X^{\e,x}]]_{T \wedge \vartheta_\rho^\e(x)} > \lambda \big ).
\end{align}
 In this case the Bernstein-type inequality for local martingales given by Theorem 3.3 of \cite{DZ01} reads for $X^{\e, x}_t - \bar\EE[X^{\e, x}_t]$ as follows 
 \begin{align} \label{eq: lemma 17- bernstein0}
 \bar \PP \Big ( \displaystyle \sup_{t\in [0, T \wedge \vartheta^\e_\rho(x)]} |X^{ \e, x}_t - \bar\EE[X^{\e,x}_t] | > \frac{\rho}{2}~|~ [X^{\e,x} - \bar\EE[X^{\e,x}]]_{T \wedge \vartheta_\rho^\e(x)} \lqq \lambda \Big ) \lqq 2 \exp \Big ( - \frac{1}{2} \frac{\rho^2}{4 \lambda}\Big ).
 \end{align}
 Hypotheses \ref{condition: on the multiplicative coefficient} and \ref{condition: domain} yield some constant $C>0$ such that for $\e>0$ small enough 
 \begin{align*}
 [X^{\e,x}- \bar\EE[X^{\e,x}]]_{T \wedge \vartheta^\e_\rho(x)} \lqq C \e^2 \int_0^T \int_{\RR^d \backslash \{0\}} |z|^{2} N^{\frac{1}{\e}}(ds,dz).
 \end{align*}
 Therefore we obtain for $\e>0$ small enough 
 \begin{align} \label{eq: quadratic variation}
 \bar \PP \Big ( [X^{\e,x} - \bar\EE[X^{\e,x}]]_{T \wedge \vartheta^\e_\rho(x)} > \lambda \Big ) 
 &\lqq \frac{\e^2 C}{\lambda} \bar \EE \Big [ \int_0^T \int_{\RR^d \backslash \{0\}} |z|^{2} N^{\frac{1}{\e}}(ds,dz)\Big ] \lqq C C_\nu^{2} T\frac{\e}{\lambda} 
 \end{align}
where $C_\nu^{2}:= \int_{\RR^d \backslash \{0\}} |z|^{2} \nu(dz)< \infty$ due to the fact that $\nu$ is 
a L\'{e}vy measure respecting the integrability condition \eqref{eq: integrability cond measure}.
 Hence choosing $\lambda:=\lambda_\e:= \e^{\frac{1}{2}}$  the inequalities (\ref{eq: lemma 17- bernstein0}) 
 and (\ref{eq: quadratic variation}) imply with (\ref{eq: lemma 17- initial estimate0}) that
 \begin{align*}
 \bar \PP \Big ( X^{\e,x}_{\vartheta^\e_\rho(x)} \notin D \Big ) \lqq 2 \exp \Big (- \frac{1}{2} \frac{\Delta^2}{4 \sqrt{\e}} \Big ) + CT c_\nu^{2} \e^{\frac{1}{2}}.
 \end{align*}
Sending $\e \ra 0$ we infer the desired result. 
\end{proof}

\medskip 
\begin{lemma} \label{chpt2: lemma second lemma lower bound first exit time} 
For any $\rho > 0$ and $c>0$ we have 
\begin{align*}
\limsup_{\e \rightarrow 0} \e \ln \sup_{x \in D} \bar \PP ( \displaystyle \sup_{t\in [0,\e\wedge \sigma^\varepsilon(x)]} 
|X_t^{ \e,x} -x| \gqq \rho) < -c.
\end{align*}
\end{lemma}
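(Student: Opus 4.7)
The plan is to exploit the very short horizon $[0,\e]$: rather than invoking a Bernstein-type inequality on a compensated martingale as in the preceding lemma, I would bound $|X^{\e,x}_t-x|$ pathwise by a compound-Poisson sum of absolute jump sizes and apply exponential Chebyshev directly. The exponential integrability in Hypothesis \ref{condition: the measure nu}.2 combined with the finiteness of $\nu$ in Hypothesis \ref{condition: the measure nu}.1 is exactly what makes this work, and uniformity in $x\in D$ will come for free since the bounding random variable is independent of $x$.

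First I would split the compensated integral, noting that for $s \lqq \sigma^\e(x)$ the left-limit $X^{\e,x}_{s-}$ lies in $\bar D$, so $|b(X^{\e,x}_s)| \lqq C_b$ and $|G(X^{\e,x}_{s-})| \lqq L(1+C_D)$ with $C_D := \sup_{y\in\bar D}|y|$. Setting $C_\nu := \int_{\RR^d \backslash \{0\}}|z|\,\nu(dz) < \infty$ (finite by Hypothesis \ref{condition: the measure nu}) and
\[
S_\e := \int_0^\e \int_{\RR^d \backslash \{0\}} |z|\, N^{\frac{1}{\e}}(ds,dz),
\]
which is a functional of the driver alone, the triangle inequality yields
\[
\sup_{t\in[0,\e\wedge\sigma^\e(x)]} |X^{\e,x}_t - x| \lqq \e\bigl(C_b + L(1+C_D)C_\nu\bigr) + \e L(1+C_D)\,S_\e.
\]
For $\e$ small enough the event of interest is therefore contained in $\{S_\e \gqq \rho/(2\e L(1+C_D))\}$, uniformly in $x\in D$.

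The Campbell-type Laplace transform for Poisson random measures then gives
\[
\bar\EE\bigl[e^{\beta S_\e}\bigr] = \exp\!\Bigl(\int_{\RR^d \backslash \{0\}}(e^{\beta|z|}-1)\,\nu(dz)\Bigr) =: e^{\Psi(\beta)}, \qquad \beta \gqq 0,
\]
and $\Psi(\beta)<\infty$ for every $\beta\gqq 0$: on $\{|z|\lqq 1\}$ one uses $e^{\beta|z|}-1 \lqq \beta e^\beta |z|$ with $\nu$ finite, while for $|z|\gqq \beta/\Gamma$ one has $e^{\beta|z|}\lqq e^{\Gamma|z|^2}$, which is $\nu$-integrable by Hypothesis \ref{condition: the measure nu}.2. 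Exponential Chebyshev therefore yields
\[
\e \ln \sup_{x\in D}\bar\PP\!\Bigl(\sup_{t\in[0,\e\wedge\sigma^\e(x)]} |X^{\e,x}_t-x| \gqq \rho\Bigr) \lqq -\frac{\beta\rho}{2L(1+C_D)} + \e\Psi(\beta),
\]
and choosing $\beta := 4cL(1+C_D)/\rho$ forces the $\limsup$ as $\e\ra 0$ to be at most $-2c<-c$. The only bookkeeping I expect to handle carefully is the stopping time $\sigma^\e(x)$, at which the process may leave $\bar D$ by an arbitrarily large jump; this is sidestepped because the SDE integrands use the left-limit $X^{\e,x}_{s-}$, which stays in $\bar D$ throughout $[0, \sigma^\e(x)]$, so the a priori bounds on $|b|$ and $|G|$ apply throughout the relevant interval. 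Beyond this, no serious obstacle is anticipated; the result is essentially a Cram\'er-type estimate made possible by the matched scaling between time horizon $\e$ and noise amplitude $\e$.
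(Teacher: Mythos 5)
Your proof is correct under the standing assumptions of Section 3 (in particular the finiteness of $\nu$ from Hypothesis \ref{condition: the measure nu}.1), but it takes a genuinely different route from the paper. The paper does not use a crude pathwise bound by the uncompensated jump sum: it first conditions on the absence of jumps of size larger than $\sqrt{\e}|\ln\e|^{q}$ on $[0,\e]$ (the complementary event being super-exponentially negligible by the Gaussian-type tail of Hypothesis \ref{condition:generalized statement- the measure nu}.2), then shows via It\^o's formula and Gronwall that the mean of the truncated process stays within $\rho/2$ of $x$, and finally controls the martingale fluctuation by the Bernstein-type inequality of \cite{DZ01}, with the probability that the quadratic variation exceeds $\e/\sqrt{|\ln\e|^{p}}$ estimated through Campbell's formula and Hypothesis \ref{condition:generalized statement- the measure nu}.3. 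Your argument replaces all of this by a single Chernoff bound on $S_\e$, exploiting that the horizon $\e$ exactly cancels the acceleration $1/\e$, so that $S_\e$ has an $\e$-independent compound Poisson law with everywhere-finite exponential moments (Hypotheses \ref{condition: the measure nu}.1 and \ref{condition: the measure nu}.2); since $\Psi(\beta)<\infty$ for every $\beta\gqq 0$, choosing $\beta$ large delivers the rate $-c$ for every $c$, and uniformity in $x\in D$ is automatic because $S_\e$ does not depend on $x$. What you gain is a considerably shorter and more elementary proof; what you lose is generality: the uncompensated pathwise bound and the finiteness of $\Psi(\beta)$ fail for infinite-intensity L\'evy measures with $\int_{|z|\lqq 1}|z|\,\nu(dz)=\infty$, i.e.\ precisely the setting of Hypothesis \ref{condition:generalized statement- the measure nu} in Subsection \ref{sec: extensions remarks}, to which the paper's truncation/Bernstein argument is designed to carry over verbatim (indeed the paper's proof of this lemma is written with E.2--E.3 rather than B). Within the scope of the lemma as stated your proof is complete; the only cosmetic omission is the range $1<|z|<\beta/\Gamma$ in the verification that $\Psi(\beta)<\infty$, which is covered trivially by the finiteness of $\nu$.
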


\begin{proof}
We fix $\rho>0$ and  $x \in D \backslash \bar B_{\rho} (0)$. Otherwise the result is trivial. 
Let $p, q>0$ be the constants appearing in Hypothesis E.3. 
First note that due to the Hypothesis E.2 there is a constant $C>0$ such that 
for any scale $\theta_\e / \e \ra \infty$ as $\e \ra 0$ we have 
 \begin{align*}
 \bar\PP(\inf\{t> 0~|~ |\e \Delta_t L^\e|>\theta_\e\} \lqq \e)
 &= 
 1- \exp(- \e(1/\e) \nu((\theta_\e/\e) B_1^c(0)))\\
 &\lqq 1- \exp(- C e^{-\Gamma\frac{\theta_\e^2}{\e^2}}) 
    \lqq 2 C e^{-\Gamma\frac{\theta_\e^2}{\e^2}}. 
 \end{align*}
For the threshold $\theta_\e = \sqrt{\e} |\ln(\e)|^{q}$ 
this limit is asymptotically exponentially negligible as $\e \ra 0$. 
Hence we consider $X^{\e, x}$ to be conditioned to have 
jumps with size less or equal to $\sqrt{\e} |\ln(\e)|^{q}$. 
We denote this process by $\xX^{\e, x}$. In addition, we have  
\begin{align*}
\Big \{ 
\sup_{t\in [0, \e\wedge \sigma^\varepsilon(x)]} |\xX^{\e,x}_{t} -x| \gqq \rho \Big \} 
&\subset 
\Big \{ \displaystyle \sup_{t\in [0, \e\wedge \sigma^\varepsilon(x)]} |\xX^{ \e, x}_t - \bar\EE[\xX^{\e,x}_t] | \gqq \frac{\rho}{2} \Big \} 
\cup \Big \{ \displaystyle \sup_{t\in [0, \e]} 
|\bar\EE[\xX^{ \e, x}_t] - x | > \frac{\rho}{2} \Big \}.
\end{align*}
We first show that for $\e$ sufficiently small the second event is empty. For any $t>0$ we have 
\begin{align*}
|\bar \EE[\xX^{ \e, x}_t] -x |^2  
&= |\bar\EE[\xX^{ \e, x}_t - x] |^2 \lqq \bar \EE[|\xX^{ \e, x}_t - x|^2].
\end{align*}
We continue with the help of $b(0) = 0$ and It\^o's formula 
\begin{align*}
\bar \EE[|\xX^{\e, x}_t -  x|^2 ]
&= 2 \bar\EE\int_0^t \lgl \xX^{\e, x}_s , b(X^{\e, x}_s) ) \rgl ds 
+ \bar\EE\int_0^t \int_{|z|\lqq \sqrt{\e} |\ln(\e)|^{q}}|\e G(\xX^{\e,x}_{s-}, z)|^2 \frac{1}{\e} \nu(ds) ds \\
&\lqq- 2c_1  \bar\EE\int_0^t |\xX^{\e, x}_s|^2 ds 
+ \e \int_0^t \int_{|z|\lqq \sqrt{\e}|\ln(\e)|^{q}} L (1+\bar\EE[|\xX^{\e,x}_{s-}|^2 ]) |z|^2\nu(dz) ds\\
&\lqq \e L \int_{|z|\lqq 1}|z|^2\nu(dz) \int_0^t  (1+\bar\EE[|\xX^{\e,x}_{s-}|^2 ]) ds\\
&\lqq \e L \int_{|z|\lqq 1}|z|^2\nu(dz) \int_0^t  (1+2 |x|^2 + 2\bar\EE[|\xX^{\e,x}_{s-}-x|^2 ]) ds\\
&\lqq \e L \int_{|z|\lqq 1}|z|^2\nu(dz) \Big( t (1+2 |x|^2) 
+ 2 \int_0^t\bar \EE[|\xX^{\e,x}_{s-}-x|^2 ] ds\Big).
\end{align*}
Hence Gronwall's lemma yields for $t\in [0, \e]$, $\e \in (0, 1]$ and 
$C=  L \int_{|z|\lqq 1}|z|^2\nu(dz)$ a positive constant $C_{x}$ such that 
\begin{align*}
\bar \EE[|\xX^{\e, x}_t -  x|^2 ] \lqq \e C (1+2 |x|^2) \int_0^t s\exp(\e 2 C (t-s)) ds \lqq \e^3 C_{x}.  
\end{align*}
and hence for all positive $\e$ such that $\e^3  C_{x} < (\frac{\rho}{2})^2$ we have 
\begin{align*}
\sup_{t\in [0, \e]}|\bar \EE[\xX^{ \e, x}_t] - x| \lqq \frac{\rho}{2}. 
\end{align*}
Therefore it follows for any $\la>0$ 
\begin{align} \label{eq: lemma 17- initial estimate}
\bar \PP \Big (  \displaystyle \sup_{t\in [0, \e\wedge \sigma^\varepsilon(x)]} |\xX^{ \e, x}_t - x | \gqq \rho \Big ) 
&\lqq \bar \PP \Big (  \displaystyle \sup_{t\in [0, \e\wedge \sigma^\varepsilon(x)]} |\xX^{ \e, x}_t - \bar\EE[\xX^{ \e, x}_{t}] | \gqq \rho \Big ) \nonumber\\
& \lqq \bar \PP \big ( \displaystyle \sup_{t\in [0, \e\wedge \sigma^\varepsilon(x)]} |\xX^{ \e, x}_t - \bar\EE[\xX^{\e, x}_t | \gqq \frac{\rho}{2}~|~ [\xX^{\e,x} - \bar\EE[\xX^{\e,x}]]_{\e\wedge \sigma^\varepsilon(x)} \lqq \lambda \big )\nonumber\\
&\qquad + \bar \PP \big ( [\xX^{\e,x}- \bar\EE[\xX^{\e,x}]]_{\e\wedge \sigma^\varepsilon(x)} > \lambda \big ).
\end{align}
Theorem 3.3 of \cite{DZ01} yields the following for the local martingale $\xX^{\e, x}_t - \bar\EE[\xX^{\e, x}_t]$ 
 \begin{align} \label{eq: lemma 17- bernstein}
 \bar \PP \Big ( \displaystyle \sup_{t\in [0, \e\wedge \sigma^\varepsilon(x)]} |\xX^{ \e, x}_t - \bar\EE[\xX^{\e,x}_t] | > \frac{\rho}{2}~|~ [\xX^{\e,x} - \bar\EE[\xX^{\e,x}]]_{\e\wedge \sigma^\varepsilon(x)} \lqq \lambda \Big ) \lqq 2 \exp \Big ( - \frac{1}{2} \frac{\rho^2}{4 \lambda}\Big ).
 \end{align}
 For $\la = \la_\e := \frac{\e}{\sqrt{|\ln(\e)|^p}}$ the latter estimate is asymptotically exponentially neglibile as $\e \ra 0$. Hence it remains to prove that 
 \[
 \bar \PP \big ( [\xX^{\e,x}]_{\e\wedge \sigma^\varepsilon(x)} > \lambda_\e \big )
 \]
 is asymptotically exponentially negligible as $\e \ra 0$.  
 \paragraph{1. Additive case: } We start with the additive case $G(x, z) = z$ such that 
 \[
  [\xX^{\e,x}]_t = \e^2 \int_0^t \int_{\sqrt{\e} |\ln(\e)|^{q}} |z|^{2} N^{\frac{1}{\e}}(ds,dz). 
 \]
 First we observe that due to Hypothesis E.1 
 $\iota(\e) :=  \int_{|z|\lqq \sqrt{\e} |\ln\e|^{q}} |z|^2 \nu(dz)\ra 0$ as $\e\ra 0$.
 Campbell's formula yields  
 \begin{align*}
 \bar\EE\Big[\exp\big(\kappa [\xX^{\e,x}]_{\e}\big)\Big] 
 &= \exp\Big(\e \int_{|z|\lqq \sqrt{\e} |\ln\e|^{q}} (\exp(\kappa \e^2 |z|^2)-1)\Big) \frac{1}{\e} \nu(dz)\Big). 
 \end{align*}
 If we choose $\kappa = \kappa_\e$ of order 
 \[
 \kappa_\e = \frac{|\ln(\e)|^p}{\e^{2}} 
 \]
 we obtain that as upper bound for the exponent 
 \[
 \lim_{\e \ra 0} \sup_{|z|\lqq \sqrt{\e}|\ln\e|^{q}} \kappa_\e \e^2 |z|^2= \frac{ \e^2 |\ln(\e)|^p(\sqrt{\e} |\ln\e|^{q})^2}{\e^{2}} = \lim_{\e \ra 0} \e|\ln \e|^{2q+p}  = 0 
 \]
 and for all $r\in (0, 1)$ we have by Taylor's theorem $\exp(r)-1 \lqq 2 r$.
Therefore  
 \begin{align*}
 \int_{|z|\lqq \sqrt{\e} |\ln\e|^{q}} \big(\exp(\kappa_\e \e^2 |z|^2)-1\big) \nu(dz) 
 &\lqq  2 \kappa_\e \e^2 \int_{|z|\lqq \e^{1/2}|\ln\e|^{q}}  |z|^2 \nu(dz) \\
 &= 2 |\ln(\e)|^p \int_{|z|\lqq \e^{1/2}|\ln\e|^{q}}  |z|^2 \nu(dz)
 \end{align*}
and Hypothesis E.3 implies 
 \begin{align*}
\limsup_{\e \ra 0} \int_{|z|\lqq \sqrt{\e} |\ln\e|^{q}} \big(\exp(\kappa_\e \e^2 |z|^2)-1\big) \nu(dz) 
 &\lqq \limsup_{\e \ra 0}  2 |\ln(\e)|^p \int_{|z|\lqq \e^{1/2}|\ln\e|^{q}}  |z|^2 \nu(dz) < \infty. 
 \end{align*}
This implies for $\iota(\e) = \int_{|z|\lqq \e^{1/2}|\ln\e|^{q}}  |z|^2 \nu(dz)$ the upper bound 
 \begin{align*}
\limsup_{\e\ra 0} \bar\EE\Big[\exp\big(\kappa_\e [\xX^{\e,x}]_{\e}\big)\Big] 
 &\lqq \limsup_{\e\ra 0} \exp\Big(2|\ln(\e)|^q \iota(\e)\Big) \lqq \limsup_{\e\ra 0}  \exp\Big(2|\ln(\e)|^q \iota(\e)\Big) = E_\infty < \infty. 
 \end{align*}
Hence 
 \begin{align*}
 \bar\PP([\xX^{\e,x}]_{\e}> \la_\e)
 & \lqq \exp(-\kappa_\e \frac{\e}{\sqrt{|\ln(\e)|^p}}) \exp\Big(2|\ln(\e)|^{q} \iota(\e)\Big)\\
 & = \exp(-\frac{|\ln(\e)|^p}{\e} \frac{\e}{\sqrt{|\ln(\e)|}^{p}}) \exp\Big(2|\ln(\e)|^{q} \iota(\e)\Big)
 \lqq \exp(- \frac{\sqrt{|\ln(\e)|^p}}{\e})  E_\infty,
 \end{align*}
which is asymptotically exponentially negligible as $\e \ra 0$. Summing up we have obtained
\begin{align*}
&\e \ln \sup_{x \in D} \bar \PP ( \displaystyle \sup_{t\in [0,\e]} 
|X_t^{ \e,x} -x| \gqq \rho) \\
&\lqq \e \ln \sup_{x \in D} \Big(\bar \PP ( \displaystyle \sup_{t\in [0,\e]} 
|\xX_t^{ \e,x} -\EE[\xX_t^{ \e,x}]| \gqq \rho) + 2 C e^{-\frac{|\ln(\e)|}{\e}}\Big)\\
&\lqq \e \ln \sup_{x \in D} \Big(\bar \PP ( \displaystyle \sup_{t\in [0,\e]} 
|\xX_t^{ \e,x} -\EE[\xX_t^{ \e,x}]| \gqq \rho~|~[\xX^{\e, x}]_\e\lqq \frac{\e}{\sqrt{|\ln(\e)|^p}}) 
+ \PP([\xX^{\e, x}]_\e> \frac{\e}{\sqrt{|\ln(\e)|^p}}) \\
&\qquad + 2 C \exp(-\frac{|\ln(\e)|^q}{\e})\Big)\\
&\lqq \e \ln \sup_{x \in D} \bigg(2 \exp \Big ( - \frac{1}{8} \frac{\rho^2 \sqrt{|\ln(\e)|^q}}{\e}\Big ) 
+ \exp\Big(- \frac{\sqrt{|\ln(\e)|^p}}{\e}\Big) E_\infty + 2 C \exp\Big(-\Gamma\frac{|\ln(\e)|^q}{\e}\Big)\bigg)\lra -\infty 
\end{align*}
as $\e \ra 0$. 
\bigskip
\paragraph{2. Multiplicative case: } 
For any $\e>0$ it follows
 \begin{align*}
 [\xX^{\e,x}- \bar \EE[\xX^{\e,x}]]_{\e} = [\xX^{\e,x}]_{\e}\lqq
 \e^2 \int_0^{\e} \int_{0 < |z|\lqq\sqrt{\e} |\ln(\e)|^q} 
 |G(\xX^{\e, x}_{s-}, z)|^{2} N^{\frac{1}{\e}}(ds,dz).
 \end{align*}
 By Hypothesis \ref{condition: on the multiplicative coefficient} and 
 due to the nonnegativity of the increments in the preceding quadratic variation 
 we have  
 \begin{align*}
 [\xX^{\e,x}]_{t}\lqq
 \e^2 2L^2\int_0^{t}\int_{0 < |z|\lqq\sqrt{\e} |\ln(\e)|^q} (1+|\xX^{\e, x}_{s-}|^2)  
 |z|^{2} N^{\frac{1}{\e}}(ds,dz) 
 \end{align*}
and
 \begin{align*}
 [\xX^{\e,x}]_{t\wedge \sigma^\varepsilon(x)}
 &\lqq \e^2 2L^2  \int_0^{t\wedge \sigma^\varepsilon(x)}\int_{0 < |z|\lqq\sqrt{\e} |\ln(\e)|^q} L(1+|\xX^{\e, x}_{s-}|^2)  
 |z|^{2} N^{\frac{1}{\e}}(ds,dz)\\
 &\lqq 2L^2 (1+\diam(D)^2))\e^2 \int_0^{t}\int_{0 < |z|\lqq\sqrt{\e} |\ln(\e)|^q} |z|^{2} N^{\frac{1}{\e}}(ds,dz)\\
  &\lqq 2L^2 (1+\diam(D)^2)) \e^2 \int_0^{t}\int_{0 < |z|\lqq\sqrt{\e} |\ln(\e)|^q} |z|^{2} N^{\frac{1}{\e}}(ds,dz).
 \end{align*}
 As a consequence we may repeat the same reasoning of 
 the additive case for $[\xX^{\e,x}]_{\e\wedge \sigma^\varepsilon(x)}$ instead of $[\xX^{\e,x}]_{\e}$, that is, 
 for sufficiently small $\e$ we have the asymptotically exponentially negligible estimate 
 \begin{align*}
 \bar\PP([\xX^{\e,x}]_{\e\wedge \sigma^\varepsilon(x)}> \frac{\e}{\sqrt{|\ln(\e)|^p}})
 & \lqq e^{-\kappa_\e \frac{\e}{\sqrt{|\ln(\e)|^p}}} \exp\Big(4L^2 (1+\diam(D)^2) |\ln(\e)|^{p} \iota(\e)\Big)\\
 &= e^{- \frac{\sqrt{|\ln(\e)|^p}}{\e}}  (E_\infty)^{2L^2(1+\diam(D)^2)}. 
 \end{align*}
 Hence the virtually identical reasoning as in the additive case yields the desired result. 
\end{proof}

\medskip 
\begin{lemma} \label{chpt2: lemma third lemma lower bound first exit time} Let $F \subset D^c$ closed. Then
\begin{align*}
\lim_{\rho \rightarrow 0} \limsup_{\e \rightarrow 0} \e \ln \sup_{x \in B_{\rho}(0)} \bar \PP (X_{\vt^x_\rho}^{ \e, x} \in F)  
\lqq - \inf_{z \in F} V(0,z).
\end{align*}
\end{lemma}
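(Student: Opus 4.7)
Set $V_F := \inf_{z \in F} V(0,z)$. The statement is vacuous when $V_F = 0$, so I assume $V_F > 0$ and fix $\delta \in (0, V_F)$. My strategy has four steps: truncate in time, embed the exit-through-$F$ event into a $J_1$-closed subset of $\DD([0,T],\RR^d)$, apply the uniform LDP of Corollary \ref{chpt2: corol Ldp uniform in compact sets of initial states}, and then transfer from $V(x,\cdot)$ to $V(0,\cdot)$ via the continuity estimates of Lemma \ref{prop: continuity pps cost functional}.

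First, by Lemma \ref{chpt2: lemma second lemma upper bound} I choose $T>0$ so large that $\limsup_{\e\to 0}\e\ln\sup_{x\in D}\bar\PP(\vt^\e_\rho(x)>T) < -V_F-1$. Splitting,
\[
\bar\PP(X^{\e,x}_{\vt^x_\rho}\in F) \lqq \bar\PP(\vt^x_\rho>T) + \bar\PP(X^{\e,x}_{\vt^x_\rho}\in F,\ \vt^x_\rho\lqq T),
\]
the first summand is asymptotically negligible on the scale $e^{-V_F/\e}$. Second, since $X^{\e,x}$ is c\`{a}dl\`{a}g and $F$ is closed, the surviving event sits inside
\[
\Phi_T^F := \{\psi\in\DD([0,T],\RR^d) : \psi(t)\in F \text{ or } \psi(t-)\in F \text{ for some } t\in[0,T]\},
\]
which I claim is closed in the Skorokhod $J_1$-topology. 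The uniform LDP applied to the compact set $\bar B_\rho(0)$ then yields
\[
\limsup_{\e\to 0}\e\ln\sup_{x\in\bar B_\rho(0)}\bar\PP(X^{\e,x}\in\Phi_T^F) \lqq -\inf_{\substack{x\in\bar B_\rho(0)\\ \psi\in\Phi_T^F}}\JJ_{x,T}(\psi).
\]

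Third, for $\psi\in\Phi_T^F$ with $\psi(t^*)=z\in F$, the nonnegativity of the entropy integrand $g\ln g - g + 1$ gives $\eE_{t^*}(g|_{[0,t^*]}) \lqq \eE_T(g)$ for every admissible control, whence
\[
\JJ_{x,T}(\psi) \gqq \JJ_{x,t^*}(\psi|_{[0,t^*]}) \gqq V(x,z) \gqq \inf_{z'\in F} V(x,z').
\]
Fourth, Lemma \ref{prop: continuity pps cost functional} provides $\rho_0>0$ with $V(0,x)<\delta$ for $|x|\lqq\rho_0$; concatenation of controls furnishes the triangle inequality $V(0,z)\lqq V(0,x)+V(x,z)$, so
\[
\inf_{x\in B_{\rho_0}(0),\,z\in F} V(x,z) \gqq V_F - \delta.
\]
Combining the four bounds,
\[
\limsup_{\e\to 0}\e\ln\sup_{x\in B_\rho(0)}\bar\PP(X^{\e,x}_{\vt^x_\rho}\in F) \lqq -(V_F-\delta) \quad (\rho<\rho_0),
\]
and sending first $\rho\to 0$ and then $\delta\to 0$ completes the proof.

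I expect the principal obstacle to lie in step two: the closedness of $\Phi_T^F$ in the $J_1$-topology is intuitive but delicate, since Skorokhod convergence can shift jump times and hitting-type conditions are not automatically preserved. A standard workaround is to enlarge $F$ to closed $\eta$-neighborhoods $F^\eta$, verify closedness of the corresponding hitting set via the upper semi-continuity of $\psi\mapsto\min_{t\in[0,T]}d(\psi(t),F^\eta)$ on c\`{a}dl\`{a}g space, apply the bound above for each $\eta>0$, and let $\eta\downarrow 0$ using the lower semi-continuity of $\JJ_{x,T}$ together with the continuity of $V(x,\cdot)$ around points of $F$.
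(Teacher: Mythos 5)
Your argument is essentially the paper's own proof: the same time truncation via Lemma \ref{chpt2: lemma second lemma upper bound}, the same reduction to a Skorokhod-closed hitting set fed into the uniform LDP of Corollary \ref{chpt2: corol Ldp uniform in compact sets of initial states}, and the same use of the triangle inequality for $V$ together with Lemma \ref{prop: continuity pps cost functional} to pass from $V(x,\cdot)$ to $V(0,\cdot)$; the closedness you flag as the main obstacle is exactly what the paper establishes in Lemma \ref{claim: second topological claim} of the appendix (and your variant including left limits is closed under $J_1$ as well, since time changes send the approximating hitting times to a point where either the limit path or its left limit lies in $F$, so no $\eta$-enlargement is needed). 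The only point the paper treats that you do not is $\inf_{z\in F}V(0,z)=\infty$: your choice of $T$ with truncation error below $-V_F-1$ is then impossible, which the paper avoids by working with the capped level $V_F(\delta)=\big(\inf_{z\in F}V(0,z)-\delta\big)\wedge\frac{1}{\delta}$ and letting $\delta\to 0$ at the end.
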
 

\begin{proof}
Fix $\delta >0$ and $V_F (\delta) :=  (\displaystyle \inf_{z \in F} V(0,z) - \delta) \wedge \frac{1}{\delta}$. 
By definition of $V$ we have 
\begin{align*}
V(x,z)\lqq V(x,y) + V(y,z) \,\,\, \mbox{ for all }\, x,y,z \in \mathbb{R}^d.
\end{align*}
By Lemma \ref{prop: continuity pps cost functional} there is $\rho_0>0$ 
such that for $\rho \in (0, \rho_0]$ we have 
\begin{align*}
\displaystyle \inf_{z \in F, y \in \bar B_{\rho}(0)} V(y,z) \gqq \displaystyle \inf_{z \in F} V(0,z) 
- \displaystyle \sup_{y \in \bar B_{\rho}(0)} V(0,y) \gqq V_F (\delta).
\end{align*}
Lemma \ref{chpt2: lemma second lemma upper bound} 
provides a constant $T >0$ such that for any $\rho \in (0, \rho_0]$
\[  
\displaystyle \limsup_{\e \rightarrow 0} \e \ln \displaystyle \sup_{y \in \bar B_{\rho}(0)}  \bar \PP (\vt^\e_{\rho} (y) > T) 
< - V_F (\delta).
\]
We consider the following subset of $ \DD([0,T], \mathbb{R}^d)$
\begin{align*}
\mathcal{A} := \{  \varphi \in \DD ([0,T], \mathbb{R}^d) ~|~ \varphi(s) \in F \quad \text{for some } s \in [0, T]\}.
\end{align*} 
We have that $\aA$ is a closed set of $\DD([0,T], \RR^d)$ for the Skorokhod topology. 
For a proof we refer to Lemma \ref{claim: second topological claim} in the Appendix. 
Corollary \ref{chpt2: corol Ldp uniform in compact sets of initial states}
implies that there exists $\rho_0>0$ such that for $0 < \rho < \rho_0$,
\begin{align*}
\displaystyle \limsup_{\e \rightarrow 0} \e \ln \displaystyle \sup_{y \in \bar B_{\rho}(0)}  \bar \PP (X^{ \e, y} \in \mathcal{A}) 
& \lqq - \displaystyle \inf_{y \in \bar B_{\rho}(0)} \displaystyle \inf_{\varphi \in \mathcal{A}} \mathbb{J}_{y, T} (\varphi) 
\lqq - \displaystyle \inf_{y \in \bar B_{\rho}(0), z \in F} V(y,z) \lqq - V_F (\delta).
\end{align*}
Finally we have
\begin{align*}
&\displaystyle \limsup_{\e \rightarrow 0+} \e \ln \displaystyle \sup_{x\in \bar B_{\rho}(0)} 
\bar \PP (X^{ \e, x}_{\vt^x_{\rho}} \in F) \\
& \lqq \displaystyle \limsup_{\e \rightarrow 0+} \e \ln \displaystyle \sup_{x\in \bar  B_{\rho}(0)}  \bar \PP (\vt^\e_\rho(x) < \infty) \\
&   \lqq  \displaystyle \limsup_{\e \rightarrow 0+} \e \ln \displaystyle \sup_{x\in \bar  B_{\rho}(0)} 
\bar \PP ( \{ \vt^\e_\rho (x) >  T \} \cup \{ \vt^\e_\rho (x) \lqq   T \} )\\ 
&  \lqq \displaystyle \limsup_{\e \rightarrow 0+} \e \ln \Big (  \displaystyle \sup_{y\in \bar B_{\rho}(0)} 
\bar  \PP ( \vt^\e_\rho (y)> T ) + \displaystyle \sup_{y\in \bar B_{\rho}(0)} \bar  \PP (X^{ \e, y} \in \mathcal{A}) \Big ) 
 \lqq - V_F(\delta).
\end{align*}
Sending $\delta \ra 0$ finishes the proof.
\end{proof}

\begin{theorem} \label{chpt2: thm lower bound first exit time}
Let $\delta >0, x \in D$. Then we have
\begin{align*}
\lim_{\e \rightarrow 0 } \bar \PP (\sigma^{\e} (x)  \lqq e^{\frac{\bar{V}- \delta}{\e}}) &= 0 \text{ and } \nonumber \\
 \liminf_{\varepsilon \ra 0} \varepsilon \ln \bar \EE [\sigma^\varepsilon(x)] &\gqq \bar V - \delta.
\end{align*}
\end{theorem}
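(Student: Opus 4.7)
The plan is to implement the classical Freidlin--Wentzell Markov-chain scheme, adapted to the $(1/\e)$-accelerated setting in which single-cycle durations collapse from order one down to order $\e$. Fix $\delta>0$. By Lemma \ref{chpt2: lemma third lemma lower bound first exit time} applied to the closed set $F=D^c$, for which $\inf_{z\in F}V(0,z)=\bar V$, I choose radii $0<\rho<\rho_1$ with $\bar B_{\rho_1}(0)\subset D$ so small that the per-attempt escape probability from a position in $\bar B_\rho(0)$ satisfies
\[
p_\e\lqq e^{-(\bar V-\delta/4)/\e} \qquad \text{for all sufficiently small }\e.
\]
By Lemma \ref{chpt2: lemma first lemma in lower bound first exit time} the process reaches $\bar B_\rho(0)$ before exiting $D$ with probability tending to one, so I restrict to that high-probability event and start the cycle clock at $\zeta_0:=\vt^\e_\rho(x)$, on which $X^{\e,x}_{\zeta_0}\in\bar B_\rho(0)$.

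Next, define recursively $\theta_m:=\inf\{t\gqq\zeta_m:X^{\e,x}_t\notin B_{\rho_1}(0)\}$ and $\zeta_{m+1}:=\inf\{t\gqq\theta_m:X^{\e,x}_t\in\bar B_\rho(0)\cup D^c\}$, and let $N:=\inf\{m\gqq 1:X^{\e,x}_{\zeta_m}\in D^c\}$ count the cycle on which escape occurs. Iterated strong Markov at $\theta_{m-1}$ combined with a union bound gives $\bar\PP(N\lqq M)\lqq Mp_\e$ for every $M\in\NN$. Choosing $M_\e:=\lceil\e^{-1}e^{(\bar V-\delta/2)/\e}\rceil$ yields $\bar\PP(N\lqq M_\e)\lqq\e^{-1}e^{-\delta/(4\e)}\ra 0$, so escape requires at least $M_\e$ unsuccessful attempts with probability tending to one.

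On the event $\{N>M_\e\}$, each of the first $M_\e$ cycles consumes at least $\e$ units of time: starting from $y=X^{\e,x}_{\zeta_m}\in\bar B_\rho(0)$, the process must cross the annulus of width $\rho_1-\rho$ to reach $\partial B_{\rho_1}(0)$ at time $\theta_m$. Applying Lemma \ref{chpt2: lemma second lemma lower bound first exit time} with radius $\rho_1-\rho$ and a constant $c>\bar V$ (admissible since $c>0$ is arbitrary in the lemma), together with strong Markov at $\zeta_m$, gives $\bar\PP(\theta_m-\zeta_m<\e)\lqq e^{-c/\e}$; a union bound yields $M_\e e^{-c/\e}\ra 0$ thanks to $c>\bar V$. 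On the intersection of all these favourable events,
\[
\sigma^\e(x)=\zeta_N\gqq\sum_{m=0}^{N-1}(\theta_m-\zeta_m)\gqq\e N\gqq\e M_\e\gqq e^{(\bar V-\delta)/\e}
\]
for $\e$ small enough, which proves the probability statement. The expectation bound follows immediately from Chebyshev: $\bar\EE[\sigma^\e(x)]\gqq e^{(\bar V-\delta)/\e}\bar\PP(\sigma^\e(x)>e^{(\bar V-\delta)/\e})$, whence $\e\ln\bar\EE[\sigma^\e(x)]\gqq\bar V-\delta+o(1)$.

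The main obstacle will be the delicate three-way balance of the scales $p_\e$, $M_\e$ and $e^{-c/\e}$. In the Brownian setting single-cycle durations are of order one, leaving a wide margin; here the $(1/\e)$-acceleration of the intensity forces per-cycle durations down to order $\e$, so the cycle count must be inflated by a factor $\e^{-1}$. This inflation is only affordable because the logarithmic correction $|\ln\e|$ is negligible compared with the Kramers exponent $\bar V/\e$. This is precisely the reason why Lemma \ref{chpt2: lemma second lemma lower bound first exit time} is formulated on the short window $[0,\e]$ rather than $[0,1]$: one must bridge the scale gap between the single-cycle drift time $\e$ and the exponentially long Kramers time $e^{\bar V/\e}$ while staying within the LDP speed $1/\e$.
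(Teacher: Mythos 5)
Your scheme is the same as the paper's: the same two-radius Markov chain of excursions, the per-attempt escape probability controlled by Lemma \ref{chpt2: lemma third lemma lower bound first exit time}, the per-cycle duration of order $\e$ controlled by Lemma \ref{chpt2: lemma second lemma lower bound first exit time} on the window $[0,\e\wedge\sigma^\e]$, the cycle count inflated to $\e^{-1}e^{(\bar V-\delta/2)/\e}$, and Chebyshev for the expectation; the paper phrases the counting through an inclusion of events rather than through the random index $N$, but the content is identical, and your bookkeeping (annulus crossing of width $\rho_1-\rho$ before exiting $B_{\rho_1}\subset D$, hence before $\sigma^\e$, so that the lemma applies; $\sigma^\e=\zeta_N\gqq\e M_\e$ on the good event) is sound.

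There is, however, one genuine gap: your argument silently assumes $\bar V$ is large relative to $\delta$. The hypotheses only guarantee $\bar V<\infty$ (Corollary \ref{corollary: finite height}), not $\bar V>0$, and the paper devotes a separate Step 3 to the case $\bar V=0$. If $\bar V=0$ (or, more generally, if $\delta\gqq 4\bar V$), Lemma \ref{chpt2: lemma third lemma lower bound first exit time} yields no decay of the per-attempt escape probability, so the bound $p_\e\lqq e^{-(\bar V-\delta/4)/\e}$ is vacuous; moreover $M_\e=\lceil \e^{-1}e^{(\bar V-\delta/2)/\e}\rceil$ reduces to $1$ for small $\e$, and the claim $\bar\PP(N\lqq M_\e)\lqq \e^{-1}e^{-\delta/(4\e)}\ra 0$ fails (the ceiling contributes a term of size $p_\e$, which need not vanish), so your favourable event $\{N>M_\e\}$ need not have probability tending to one. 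For $\bar V>0$ this is harmless: since the event $\{\sigma^\e(x)\lqq e^{(\bar V-\delta)/\e}\}$ is monotone in $\delta$, you may assume $\delta<4\bar V$ and your estimates then close exactly as in the paper's Step 2. But for $\bar V=0$ no such reduction exists, and one must argue differently, e.g.\ as the paper does: after reaching $\bar B_\rho(0)$ (Lemma \ref{chpt2: lemma first lemma in lower bound first exit time}), the process does not move by $\rho$ within time $\e\wedge\sigma^\e$ except with probability tending to zero (Lemma \ref{chpt2: lemma second lemma lower bound first exit time}), which already forces $\sigma^\e>\e\gqq e^{-\delta/\e}$ with probability tending to one; in your language, the first crossing of the annulus alone suffices, without any control on $\bar\PP(N\lqq M_\e)$. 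Adding this observation (and the remark that $\delta$ may be taken small when $\bar V>0$) makes your proof complete.
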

\begin{proof}
 The proof is organized in three consecutive steps. We start with the case $\bar V>0$. 
\paragraph{Step 1.} 
Due to Hypothesis \ref{condition: domain} there is $\rho' > 0$ such that $\bar B_{  \rho'} (0) \subset D$ and $\displaystyle \sup_{x\in B_{\rho'}(0)} \langle b(x), n(x)\rangle < 0$ and let $\rho>0$ such that $B_\rho(0) \subset B_{\rho'}(0)$. We define recursively for any $x \in D$
\begin{align} \label{eq: markov chain for the first exit times} 
\zeta^x_0 &:= 0 \non  \quad \text{and for any } k \in \NN\\
\vt_{k, \rho}^x &:= \inf \{ t \gqq \zeta^x_k ~|~ X^{ \e, x}_t \in \bar B_{\rho}(0) \cup D^c \}, \non \\
\zeta^x_{k+1} & := 
\begin{cases} \infty, \quad &\text{if } X^{ \e, x}_{\vt_{k, \rho}^x} \in D^c, \\
\inf \{  t \gqq \vt_{k, \rho}^x ~|~ X^{ \e, x}_t \in \bar B^c_{\rho'} (0) \}, \quad &\text{if } X^{ \e, x}_{\vt_{k, \rho}^x} \in \bar B_\rho(0).\\
\end{cases} 
\end{align}
By construction $(\zeta^x_k)_{k \in \NN}$ and $(\vt_{k, \rho}^x)_{k \in \NN}$ we have $\bar \PP$-a.s. for all $k \in \NN$ 
\begin{align*}
  \zeta^x_{k} \lqq \vartheta^x_{k, \rho} \lqq \zeta^x_{k+1} \lqq \vartheta^x_{k+1, \rho}.
\end{align*}
Since $\rho'> \rho$ we have that $\zeta^x_{k+1} > \vartheta^x_{k}$ if $X^{\varepsilon,x}_{\vartheta^x_{k}} \in \bar B_\rho(0)$. Hence $(\vt^x_{k, \rho})_{k \in \NN}$ is an increasing sequence of $(\fF_t)-$stopping times. Since
the process $(X^{\e,x}_t)_{t \gqq 0}$ has the strong Markov property with respect to $(\fF_t)_{t \gqq 0}$ it follows that 
$(X^{ \e, x}_{\vt_{k, \rho}^x})_{k \in \NN}$ is a Markov chain and  $\sigma^\varepsilon(x) = \vartheta^x_{\ell,\rho}$ for some (random) $\ell \in \NN$ with the convention 
$X^{\e, x}_{\vt_{\ell, \rho}^x} := X^{ \e, x}_{\sigma^{\e} (x)}$ if $\vt_{\ell, \rho}^x = \infty$.  
\begin{claim} \label{claim: inclusion sets for conclusion first exit time problem}
For any $x \in D$, $\e >0$, $T>0$ and $k \in \NN$ arbitrary it follows 
\begin{align} \label{eq: inclusion sets for conclusion first exit time problem}
\{ \sigma^{\e} (x)  \lqq  T \} \subset \{ \sigma^{\e} (x)  =  \vt_0^{x, \rho} \} \cup 
\bigcup_{m=1}^{k} \{ \sigma^{\e} (x) = \vt_{m, \rho}^x \} \cup \{\zeta_m^x - \vt^x_{m-1, \rho} \lqq 
T \wedge\sigma^\varepsilon(x)\}.  
\end{align}
\end{claim}
\begin{proof}
Fix $k \in \NN$ such that $\sigma^\varepsilon(x)> \vartheta^x_{k,\rho}$ and $\zeta^x_m - \vartheta^x_{m-1,\rho} >
T\wedge \sigma^\varepsilon(x)$ for every $m \in \{1, \dots,k \}$. It follows
\begin{align*}
\sigma^\varepsilon(x) > \vt^x_{k, \rho} = \sum_{m=1}^{k} (\vt^x_{m, \rho}- \vt^x_{m-1, \rho}) + \vt^x_{0, \rho} 
 \gqq \sum_{m=1}^k (\zeta^x_{m}- \vt^x_{m-1, \rho})
 > k (T\wedge \sigma^\varepsilon(x)).
\end{align*}
Contraposition of the preceding inclusion of events and eliminating redundancies yield 
\begin{align*} 
\{ \sigma^{\e} (x)  \lqq k T \} &\subset \{ \sigma^{\e} (x)  \lqq \vt_0^{x, \rho} \} \cup 
\bigcup_{m=1}^{k} \{ \sigma^{\e} (x) \lqq \vt_{m, \rho}^x \} \cup \{\zeta_m^x - \vt^x_{m-1, \rho} \lqq 
T\wedge\sigma^\varepsilon(x) \} \\
& =  \{ \sigma^{\e} (x)  =  \vt_0^{x, \rho} \} \cup 
\bigcup_{m=1}^{k} \{ \sigma^{\e} (x) = \vt_{m, \rho}^x \} \cup \{\zeta_m^x - \vt^x_{m-1, \rho} 
\lqq T\wedge \sigma^\varepsilon(x) \}.
\end{align*}
\end{proof}
\no Fix $\delta >0$. We set $k(\e) := \left \lfloor{ \frac{1}{T(\e)} e^{\frac{\bar V - \delta}{\e}}}\right \rfloor+1$
for $T(\e) = \e$. For any $x \in D$ this yields
\begin{align*} 
\bar \PP (\sigma^\e(x) \lqq e^{\frac{\bar V - \delta}{\e}}) \lqq \bar \PP (\sigma^\e(x) \lqq k(\varepsilon) 
(T(\e) \wedge \sigma^\varepsilon(x))).
\end{align*}
The inclusion of events \eqref{eq: inclusion sets for conclusion first exit time problem} implies that
\begin{align} \label{eq: second inclusion events0}
\bar \PP(\sigma^{\e} (x)  \lqq k(\e) T ) \lqq \bar \PP (  \sigma^{\e} (x)  =  \vt_0^{x, \rho} ) +  
\sum_{m=1}^{k(\e)} \Big (\bar \PP( \sigma^{\e} (x) = \vt_{m, \rho}^x ) + \bar \PP( \zeta_m^x - \vt^x_{m-1, \rho} \lqq 
T(\e) \wedge \sigma^\varepsilon(x)) ) \Big ).  
\end{align}

\paragraph{Step 2.} Using Lemma \ref{chpt2: lemma third lemma lower bound first exit time}
there exists $\e_0 > 0$ and $\rho>0$ such that for $\e \in (0, \e_0]$ we have
\begin{align} \label{eq: first estimate to the end --1}
\displaystyle \limsup_{\e \rightarrow 0} \e \ln \displaystyle \sup_{x\in \bar  B_{\rho}(0)} 
\bar \PP (X^{ \e, x}_{\vt^x_{\rho}}  \in D^c) \lqq - \bar{V} + \dfrac{\delta}{2}.
\end{align}

\no Let $0 < \rho < \rho_0$ satisfy Step 1 such that $0 <\rho' - \rho < \rho_0$ for some $\rho_0>0$ fixed below, 
$x  \in D$ and $m \gqq 1$. Since $\sigma^\varepsilon(x) < \infty$ $\bar \PP$-a.s. and due to the strong Markov property we have
\begin{align} \label{eq: first estimate to the end-0}
\sup_{x \in D} \bar  \PP (\sigma^{\e} (x) = \vt^x_{m, \rho}) \nonumber
& \lqq \sup_{x \in D} \bar \PP (X^{ \e, x}_{\vt^x_{m, \rho}} \in D^c; \vartheta^x_{m, \rho} < \infty) \nonumber  \\
& \lqq  \sup_{x \in D} \bar \PP (X^{ \e, x}_{\vt^x_{m-1, \rho}} \in \bar B_{\rho}(0); X^{ \e, x}_{\vt^x_{m, \rho}} \in D^c) \lqq  \sup_{y \in \bar B_{\rho}(0)} \bar  \PP (X^{\e}_{\vt^y_{\rho}} \in D^c).
\end{align}
\no Lemma \ref{chpt2: lemma third lemma lower bound first exit time} yields for any $m \gqq 1$ and $\delta>0$ that there exists $\rho_0>0$ such that for any $\rho < \rho_0$ we have
\begin{align} \label{eq: first estimate to the end-0.5}
\limsup_{\varepsilon \ra 0} \varepsilon \ln \bar \PP(\sigma^\varepsilon(x)= \vartheta^x_{m, \rho}) \lqq -\bar V + \frac{\delta}{2}. 
\end{align}
\no For the constant $c= \bar V - \frac{\delta}{2}>0$ and $\rho_0 >0$ such that for $ \rho' -\rho \lqq \rho_0$ 
Lemma \ref{chpt2: lemma second lemma lower bound first exit time} implies 
\begin{align} \label{eq: first estimate to the end-1}
\sup_{y \in D} \bar \PP (\zeta_k^y - \vt^y_{k-1, \rho} \lqq T(\e) \wedge \sigma^\varepsilon(x))
& \lqq  \displaystyle  \sup_{y \in B_{\rho}(0)}  \bar \PP \Big ( \displaystyle  \sup_{t\in [0, T(\e) \wedge \sigma^\varepsilon(x)]} |X^{ \e, y}_t - y| \gqq \rho'-\rho \Big ) \non \\
&\lqq   \displaystyle  \sup_{y \in D}  \bar \PP \Big ( \displaystyle  \sup_{t \in [0, T(\e) \wedge \sigma^\varepsilon(x)]} |X^{\e, y}_t - y| \gqq \rho' - \rho\Big ).
\end{align}
Therefore, for the constant $c= \bar V - \frac{\delta}{2}>0$ fixed above we obtain 
\begin{align} \label{eq: first estimate to the end-2}
\displaystyle \limsup_{\varepsilon \ra 0} \varepsilon \ln  \displaystyle \sup_{y \in D} \bar \PP(\displaystyle \sup_{t \in [0,T(\e) \wedge \sigma^\varepsilon(x)]} |X^{\varepsilon,y}-y| \gqq \rho' -\rho) < - \Big (\bar V-  \frac{\delta}{2} \Big).
\end{align}
Hence, combining \eqref{eq: second inclusion events0}-\eqref{eq: first estimate to the end-2} yields for any $k \in \NN$ and $x \in D$ that 
\begin{align} \label{eq: first estimate to the end-3}
&\bar \PP (\sigma^{\e} (x)  \lqq k(\varepsilon) (T(\e) \wedge \sigma^\varepsilon(x)))  \nonumber\\
&\lqq \bar  \PP (\sigma^{\e} (x)  = \vt_{0, \rho}^x ) + \sum_{m=1}^{k(\varepsilon)} \Big ( \bar \PP (\sigma^{\e} (x) 
= \vt_{m,\rho}^x)+ \bar \PP (\zeta_{m}^x - \vt_{m-1, \rho}^x \lqq T(\e) \wedge \sigma^\varepsilon(x)) \Big ) \nonumber\\
& \lqq  \bar \PP (\sigma^{\e} (x) = \vt_{0, \rho}^x) + 2 k(\varepsilon) e^{- \frac{\bar V - \frac{\delta}{2}}{\varepsilon}}.
\end{align}
Due to Lemma \ref{chpt2: lemma first lemma in lower bound first exit time} 
we have for all $x \in D$ the desired result
\begin{align} \label{eq: to use in the first exit time asymptotics full2}
\bar \PP ( \sigma^{\e} (x)  \lqq e^{\frac{\bar{V}- \delta}{\e}}) & \lqq \bar  \PP  (\sigma^{\e} (x)  \lqq k(\varepsilon) T(\e)) \lqq \bar \PP (X^{ \e, x}_{\vt_{0,\rho}^x} \notin \bar{B}_{\rho} (0)) + \frac{2}{T(\e)} e^{-\frac{\delta}{2 \e}} 
\rightarrow 0 \quad \text{ as } \e \ra 0.
\end{align}
 Chebyshev's inequality implies for $\varepsilon>0$ sufficiently small that
\begin{align*}
 \bar \EE [\sigma^\e(x)] & \gqq e^{\frac{\bar V- \delta}{\e}} \bar \PP (\sigma^\e(x) \gqq e^{\frac{\bar V - \delta}{\e}}) 
 \gqq \frac{1}{2} e^{\frac{\bar V - \delta}{\e}}.
 \end{align*}
 This finishes the proof for the case $\bar V>0$. 
 \medskip
 \noindent \paragraph*{Step 3. } We treat the  case $\bar{V}= 0$.  
 Let $\delta > 0$ and $x \in D$. Choose $\rho > 0$ such that $\bar{B}_{ \rho}(0) \subset D$. 
 Assume $c >0$. By the strong Markov property
 we can choose $\e_0 \in (0, 1]$ such that for $\e \in (0, \e_0]$ 
\begin{align} \label{eq: lower bound final}
\bar \PP (\sigma^{\e} (x) > e^{-\frac{\delta}{\e}}) \gqq \bar \PP (X^{ \e, x}_{\vt^x_{\rho}} \in \bar{B}_\rho (0)) 
\inf_{y \in B_{ \rho}(0)}  \bar \PP (\sup_{t \in [0, \e\wedge \sigma^{\e} (x)]} |X^{ \e, y}- y| \lqq \rho).
\end{align}
 Lemma \ref{chpt2: lemma first lemma in lower bound first exit time} and 
 Lemma \ref{chpt2: lemma second lemma lower bound first exit time}  imply that the  right-hand side of (\ref{eq: lower bound final}) converges to $1$ as $\e \rightarrow 0$.  This finishes the proof.
\end{proof}
\paragraph*{The exit location in Theorem \ref{thm: first exit time}:}
\begin{rem} \label{rem: exit location}
The proof of the statement 2. of Theorem \ref{thm: first exit time} goes along the same line of 
reasoning as in the Brownian case and is extensively documented in the literature in different settings. We refer the reader for example to Theorem 4.2.4 in \cite{FW98} to a a more general setting 
for the deterministic dynamical system \eqref{eq: determinsitic ode} with an additive Brownian perturbation and to Theorem 5.7.11 in \cite{DZ98} for a multiplicative Brownian perturbation of \eqref{eq: determinsitic ode}. 
Our result is obtained with analogous arguments used to prove the second statement of Theorem 2.4.6 in \cite{Siegert} (pp. 88-90). 
Therefore we omit the proof and refer the reader to \cite{Siegert}.
\end{rem}

\bigskip
\section{Appendix}
\subsection{Preliminaries for the proof of the LDP} \label{sec: preliminaries LDP}
\subsubsection{Proof of Lemma \ref{lemma: integrability controls}} \label{section: lemma int controls}
Let $\nu \in \mathfrak{M}$ satisfy Hypothesis \ref{condition:generalized statement- the measure nu}.
\paragraph{Step 1.} We start with the proof of  \eqref{eq: integrability controls- first order}.  Let $g \in \mathfrak{S}^M$.
We have the decomposition
  \begin{align} \label{chpt2: eq int controls eq 0}
  \int_0^T \int_{\RR^d \backslash \{0\}} |z|^2 g(s,z) \nu(dz) ds \lqq   \int_0^T \int_{0<|z|\lqq 1} |z|^2 g(s,z) \nu(dz) ds +  \int_0^T \int_{|z|> 1} |z|^2 g(s,z) \nu(dz) ds.
  \end{align}
The first integral on the right hand side of (\ref{chpt2: eq int controls eq 0}) is estimated as follows. 
Young's inequality reads for any $a, b \gqq 0$ that $ab \lqq e^a + b \ln b -b$. This implies that
\begin{align} \label{chpt2: eq int controls eq 0.5}
\int_0^T \int_{0<|z|\lqq 1} |z|^2 g(s,z) \nu(dz) ds &\lqq \int_0^T \int_{0<|z|\lqq 1} |z|^2 (e + \ell (g(s,z))) \nu(dz) ds \nonumber \\
& \lqq e T c_\nu^2 + \int_0^T \int_{0< |z|\lqq 1} |z|^2 \ell(g(s,z))\nu(dz) ds \lqq e T c_\nu^2 + M  < \infty
\end{align}
since $\nu$ is a L\'{e}vy measure ($c_\nu^2 := \int_{0<|z|\lqq 1} |z|^2 < \infty$).
For the second integral in the right hand side of (\ref{chpt2: eq int controls eq 0}) we estimate
\begin{align} \label{chpt2: eq int controls 1}
\int_0^T \int_{|z|\gqq 1} |z|^2 g(s,z) \nu(dz)ds &\lqq T \Big ( \int_{|z|\gqq 1} e^{\Gamma |z|^2} \nu(dz) \Big ) + \frac{1}{\Gamma} \int_0^T \int_{|z|\gqq 1} \ell (g(s,z))\nu(dz)dz  < \infty
\end{align}
since $\nu$ satisfies the integrability assumption (\ref{eq: integrability cond measure}) 
and $g \in \mathfrak{S}^M$. Combining (\ref{chpt2: eq int controls eq 0}), (\ref{chpt2: eq int controls eq 0.5}) 
and  (\ref{chpt2: eq int controls 1}) yields (\ref{eq: integrability controls- first order}).
\medskip
 \paragraph{Step 2.} 
 We fix $M>0$, $g \in \mathfrak{S}^M$ and $I \subset [0,T]$ a measurable set. Remark 3.3. in \cite{BCD13} 
 states for any $\beta>0$ that 
\begin{align} \label{eq: lemma int controls rmk}
&|x-1| \lqq c_1(\beta) \ell(x)\quad \text{if } |x-1| \gqq \beta \text{ and } |x-1|^2 \lqq c_2(\beta) \ell(x) \quad  \text{if } |x-1| \lqq \beta,
\end{align} 
 for some $c_1(\beta), c_2(\beta) >0$ where $c_1(\beta) \ra 0$ as $\beta \ra 0$.
 Let $\beta>0$ and consider 
 the measurable set $$E_\beta:=\{ (s,z) \in I \times \RR^d \backslash \{0\} ~|~ |g(s,z)-1| \lqq \beta \}.$$
We apply the following version of Young's inequality: $ab \lqq e^{\sigma a} + \frac{1}{\sigma} \ell(b)$  for any $a,b>0$ and $\sigma\gqq 1$, 
and obtain for any $\beta>0$ and  $\sigma \gqq 1$ 
 \begin{align} \label{eq: lemma int controls final}
 &\int_I \int_{\RR^d \backslash \{0\}} |z||g(s,z)-1| \nu(dz) ds 
 \lqq \int_{E_\beta} |z| |g(s,z)-1| \nu(dz)ds + \int_{E_\beta^c} |z||g(s,z)-1| \nu(dz)ds \nonumber \\
 &\quad \lqq \Big (\int_I \int_{\RR^d \backslash \{0\}} |z|^2 \nu(dz)ds \Big )^{\frac{1}{2}} \sqrt{c_2(\beta)} \sqrt{M} + \int_{E_\beta^c \cap \{ |z|>1 \}} |z|(g(s,z)-1) \nu(dz)ds \nonumber\\
 &\qquad + \int_{E_\beta^c \cap \{0<|z|<1\}} |z|(g(s,z)-1) \nu(dz) ds \nonumber \\
 &\quad \lqq  \sqrt{|I| C_1 c_2(\beta) M} 
 +  |I| (C_2+C_3)+ \frac{M}{\sigma} + c_1(\beta)M.
 \end{align}
 In the preceding estimate we used
\begin{align*}
  C_1:= \displaystyle \int_{\RR^d \backslash \{0\}} |z|^2 \nu(dz), \quad
 C_2:= \displaystyle \int_{|z|>1} e^{\sigma|z|} \nu(dz) \quad \text{and } C_3 := \int_{|z|>1}|z| \nu(dz),
\end{align*} 
 all of which are finite since $\nu$ is a L\'{e}vy measure on $(\RR^d \backslash \{0\}, \bB(\RR^d \backslash \{0\}))$ 
 and satisfies (\ref{eq: integrability cond measure}). Choosing $I= [0,T]$ in (\ref{eq: lemma int controls final}) 
 proves (\ref{eq: integrability controls- first order1}). 
 \medskip
\paragraph{Step 3.}
In order to prove (\ref{eq: integrability controls- limit zero}) let us fix $\delta'>0$ arbitrary 
and  $t, t' \in [0,T]$ such that $|t - t' | < \delta$ with $\delta>0$ fixed below. 
Let $\beta>0$ be sufficiently large such that $c_1(\beta)M < \frac{\delta'}{4}$ 
and $\sigma> \frac{4M}{\delta'}$. Finally fix $\delta>0$ such that
$\delta < \frac{\delta'}{4 (C_2+C_3)} \wedge \frac{(\delta')^2}{16(C_1 c_2(\beta)M)^2}$. 
For the choice of $\beta, \sigma \gqq 1$ and $\delta>0$ as above, one has that (\ref{eq: lemma int controls final}) implies
\begin{align*}
\int_t^{t'} \int_{\RR^d \backslash \{0\}} |z||g(s,z)-1| \nu(dz) ds  < \delta'.
\end{align*}
This finishes the proof of (\ref{eq: integrability controls- limit zero}).
 \begin{flushright} $\square$ \end{flushright}

\subsubsection{Proof of Proposition \ref{proposition: priori estimate controlled process}} \label{section: Bernstein}
For convenience of notation we drop the dependence of $\tilde X^{\varepsilon,x}$ on $x$. 
For every $\varepsilon>0$ let $\rR(\varepsilon)>0$ such that $\rR(\varepsilon) \ra \infty$ 
and $\varepsilon \rR^2(\varepsilon) \ra 0$ as $\varepsilon \ra 0$, for example 
$\rR(\varepsilon):= |\ln \varepsilon|$, $\varepsilon>0$. By definition of $\tilde \tau^\e_{\rR(\e)}$ in (\ref{eq: the stopping time}) it follows
\begin{align*}
\bar \PP \Big ( \displaystyle \sup_{t\in [0, T]} |\tilde X^\e_t| > \rR(\e) \Big ) 
\lqq \bar \PP \Big ( \displaystyle \sup_{t \in [0, \tilde \tau^\e_{\rR(\e)} \wedge T]} |\tilde X^\e_t| > \rR(\e) \Big ).
 \end{align*}
We observe that for every $\e>0$ the process $(\tilde X^\e_t)_{t \in [0,T]}$ 
 is a locally square integrable martingale. Therefore we use the Bernstein-type inequality given by Theorem 3.3 of 
 \cite{DZ01} and infer for some parameter $\lambda= \lambda_\e>0$ that is fixed below
\begin{align} \label{eq: localization prop-bernstein}
\bar \PP \Big ( \displaystyle \sup_{t\in [0,  \tilde  \tau^\e_{\rR(\e)} \wedge T]} |\tilde X^\e_t| > \rR(\e) \Big ) 
& \lqq \bar \PP \Big ( \displaystyle \sup_{t \in [0, \tilde  \tau^\e_{\rR(\e)} \wedge T]} |\tilde X^\e_t| > \rR(\e) , \; [\tilde X^\e]_{\tilde \tau^\e_{\rR(\e)} \wedge T} \lqq \lambda \Big ) + \bar \PP \Big ( [\tilde X^\e]_{\tilde \tau^\e_{\rR(\e)} \wedge T} > \lambda \Big ) \nonumber \\
 & \lqq 2 \exp \Big (-\frac{1}{2}\frac{\rR^2(\e)}{\lambda} \Big ) + \bar \PP \Big ( [\tilde X^\e]_{\tilde \tau^\e_{\rR(\e)}} > \lambda \Big ).
 \end{align}
 For every $\e>0$ the quadratic variation of the process 
 $(\tilde X^\e)_{t \in [0,T]}$ is given for every $t \in [0,T]$ by
 \begin{align*}
 [\tilde X^\e]_t = \e^2 \int_0^t \int_{\RR^d \backslash \{0\}} |G(\tilde X^{\e}_{s-})|^2 |z|^2 N^{\frac{\varphi_\e}{\e}}(ds,dz).
\end{align*}
Due to Hypothesis \ref{condition: on the multiplicative coefficient} and 
Chebyshev's inequality the second probability of the last term of (\ref{eq: localization prop-bernstein}) 
is estimated for $\e \in (0, \e_0]$ with $\e_0>0$ small enough as follows
\begin{align} \label{eq: localization prop- quadratic variation}
\bar \PP \Big ( [\tilde X^\e]_{ \tilde \tau^\e_{\rR(\e)} \wedge T} > \lambda \Big ) 
& \lqq  \bar \PP \Big ( \e^2 \int_0^{\tilde \tau^\e_{\rR(\e)} \wedge T } 
\int_{\RR^d \backslash \{0\}} |G(\tilde X^{\e}_{s-},z)|^2 N^{\frac{\varphi_\e}{\e}}(ds, dz) > \lambda \Big ) \nonumber \\
& \lqq \bar \PP \Big ( 2 L^2 \e^2(1+ \rR^2(\varepsilon)) \int_0^T \int_{\RR^d \backslash \{0\}} |z|^{2} \varphi_\e(s,z) \nu(dz) ds  > \lambda \Big )  \nonumber\\
& \lqq \frac{2 L^2 \bar C \varepsilon(1+ \rR^2(\varepsilon))}{\lambda}, 
\end{align}
where $\bar C := \displaystyle \sup_{g \in \mathfrak{S}^M} \int_0^T \int_{\RR^d \backslash \{0\}} |z|^2 g(s,z) \nu(dz)ds < \infty$ 
by (\ref{eq: integrability controls- first order}) of Lemma \ref{lemma: integrability controls}. 
Set $\lambda= \lambda_\e= \rR(\e)$, $\varepsilon>0$. Combining (\ref{eq: localization prop-bernstein}) 
and (\ref{eq: localization prop- quadratic variation}) yields some $C>0$ and  $\varepsilon_0>0$ such that $\varepsilon< \varepsilon_0$ implies
\begin{align*}
\bar \PP \Big ( \displaystyle \sup_{t\in [0, T]} |\tilde X^\e_s| > \rR(\e)\Big ) \lqq 2 e^{-\frac{1}{2} \rR(\e)} + C \e \rR(\e), 
\end{align*}
which converges to $0$ as $\varepsilon \ra 0$.

 \begin{flushright} $\square$ \end{flushright}
 \bigskip
 
\subsubsection{Proof of Proposition \ref{proposition: a priori bound}} \label{section: a priori bound}
For every $\varepsilon>0$ let $\rR(\varepsilon)>0$ fixed as in the statement of Proposition \ref{proposition: priori estimate controlled process}, 
i.e. such that $\rR(\varepsilon) \ra \infty$ and $\varepsilon \rR^2(\varepsilon) \ra 0$ as $\varepsilon \ra 0$ and 
$\tilde \tau^\varepsilon_{\rR(\varepsilon)}$ in (\ref{eq: the stopping time}). 
We drop the dependence on $x \in \RR^d$ wherever possible without confusion. 
Ito's formula, Hypotheses \ref{condition: det dynamical system}, \ref{condition: the measure nu} 
and \ref{condition: on the multiplicative coefficient} and the inequality (\ref{eq: integrability controls- first order}) 
imply on the event $\{T < \tilde \tau^\varepsilon_{\rR(\varepsilon)} \}$ for any $t \in [0,T]$, $\varepsilon<\varepsilon_1$ 
with $\varepsilon_1>0$ sufficiently small and $\bar \PP$-a.s. that the following holds
\begin{align} \label{eq: prop a priori bound Ito}
|\tilde X^\varepsilon_t|^2 &\lqq |x|^2 + 2 \int_0^t \int_{\RR^d \backslash \{0\}} \langle G(\tilde X^\varepsilon_s)z, \tilde X^\varepsilon_{s} \rangle (\varphi_\varepsilon(s,z)-1) \nu(dz)ds \nonumber \\
&\quad  + M^\varepsilon_1(t) + M^\varepsilon_2(t) + \varepsilon \int_0^t \int_{\RR^d \backslash \{0\}} |G(\tilde X^\varepsilon_s) z|^2 \varphi^\varepsilon(s,z)\nu(dz)ds  \nonumber \\
& \lqq  |x|^2 + 2 L^2 C_0 + 2 L^2 \int_0^t |\tilde X^\varepsilon_s|^2 \Theta^\varepsilon(s) ds +  M^\varepsilon_1(t) + M^\varepsilon_2(t)+ 2 \varepsilon L^2 (1+ \rR^2(\varepsilon))C_1  \nonumber\\
& \lqq 2 |x|^2 + 2 L^2 C_0 + 2 L^2 \int_0^t |\tilde X^\varepsilon_s|^2 \Theta^\varepsilon(s) ds +  M^\varepsilon_1(t) + M^\varepsilon_2(t),
\end{align}
where for any $t \in [0,T]$ and $\varepsilon>0$ we denote the processes 
\begin{align*}
\begin{cases}
M^\varepsilon_1(s)&:= \displaystyle \int_0^t \int_{\RR^d \backslash \{0\}} \varepsilon^2|G(\tilde X^\varepsilon_{s-})z|^2 \tilde N^{\frac{1}{\varepsilon} \varphi^\varepsilon}(ds,dz), \\
M^\varepsilon_2(s)&:= \displaystyle \int_0^t \int_{\RR^d \backslash \{0\}} 2 \varepsilon \langle G(\tilde X^{\varepsilon}_{s-})z , \tilde X^\varepsilon_{s-} \rangle \tilde N^{\frac{1}{\varepsilon} \varphi^\varepsilon}(ds,dz),
\end{cases}
\end{align*}
and the constants 
\begin{align*}
\begin{cases}
C_0 &:= \displaystyle \sup_{g \in \mathfrak{S}^M} \int_0^T \int_{\RR^d \backslash \{0\}} |z| |g(s,z)-1| \nu(dz)ds < \infty, \quad \text{due to (\ref{eq: integrability controls- first order1})}, \\
C_1 &:= \displaystyle \sup_{g \in \mathfrak{S}^M} \int_0^T \int_{\RR^d \backslash \{0\}} |z|^2 g(s,z) \nu(dz)ds< \infty, \quad \text{due to (\ref{eq: integrability controls- first order})}.
\end{cases}
\end{align*}
In addition, $\Theta^\varepsilon(s):= \int_{\RR^d \backslash \{ 0\}} |z||\varphi^\varepsilon(s,z)-1| \nu(dz) ds$ is such that
$\int_0^T \Theta^\varepsilon(s,z)ds < \infty$ due to (\ref{eq: integrability controls- first order1}). 
Gronwall's lemma yields a constant $C_2>0$ such that
for every $\varepsilon>0$ small enough, the event $\{ T >  \tilde \tau^\varepsilon_{\rR(\varepsilon)}\}$ implies
\begin{align} \label{eq: a priori estimate-Gronwall}
\displaystyle \sup_{t \in [0,T]} |\tilde X^\varepsilon_t|^2 \lqq C_2 e^{2L^2 \int_0^T \Theta^\varepsilon(s)ds} \Big ( 1 + \displaystyle \sup_{t \in [0,T]} |M^\varepsilon_1(t)| + \displaystyle \sup_{t\in [0, T]} |M^\varepsilon_2(t)| \Big ). 
\end{align}
The Burkholder-Davis-Gundy and the Jensen inequalities yield some $C_3>0$ such that 
\begin{align} \label{eq: a priori estimate-BDG1}
\bar \EE \Big [ \displaystyle \sup_{t \in [0, \tilde \tau^\varepsilon_{\rR(\varepsilon)}]} |M^\varepsilon_1(s)|\Big ] & \lqq C_3 \bar \EE \Big [ \Big ( \int_0^{\tilde \tau^\varepsilon_{\rR(\varepsilon)}} \int_{\RR^d \backslash \{0\}} \varepsilon^4 |G(\tilde X^\varepsilon_{s-},z)|^2 N^{\frac{1}{\varepsilon} \varphi^\varepsilon}(ds,dz) \Big )^{\frac{1}{2}}\Big ] \nonumber \\
& \lqq C_3 \varepsilon^2 \rR(\varepsilon) \sqrt{\bar \EE \Big [ \int_0^T \int_{\RR^d \backslash \{0\}} |z|^2 N^{\frac{1}{\varepsilon} \varphi^\varepsilon}(ds,dz) \Big ]} \nonumber \\
& \lqq C_3 \frac{\varepsilon^2 \rR^2(\varepsilon)}{\sqrt{\varepsilon}} \sqrt{C_1} \ra 0 \quad \text{ as }\varepsilon\ra 0. 
\end{align}
Analogously it is shown that
\begin{align} \label{eq: a priori estimate: BDG2}
\bar \EE \Big [ \displaystyle \sup_{ t \in [0,\tilde \tau^\varepsilon_{\rR(\varepsilon)}]} |M^\varepsilon_2(t)| \Big ] \ra 0, \quad \text{as } \varepsilon \ra 0.
\end{align}
Hence from (\ref{eq: a priori estimate-Gronwall}), (\ref{eq: a priori estimate-BDG1}) 
and (\ref{eq: a priori estimate: BDG2}) it follows for some $C_4>0$ and every $\varepsilon>0$ small enough that
\begin{align*}
\bar \EE \Big [ \displaystyle \sup_{t \in [0,\tilde \tau^\varepsilon_{\rR(\varepsilon)}]} |\tilde X^\varepsilon_t|^2 \Big ] & \lqq C_4 \Big ( 1 + \bar \EE \Big [ \displaystyle \sup_{t \in [0,\tilde \tau^\varepsilon_{\rR(\varepsilon)}]} |M^\varepsilon_1(t)|\Big ] + \bar \EE \Big [ \displaystyle \sup_{t \in [0,\tilde \tau^\varepsilon_{\rR(\varepsilon)}]} |M^\varepsilon_2(t)|\Big ]\Big )  < \infty.
\end{align*}
This finishes the proof.
\begin{flushright} $\square$ \end{flushright}
\bigskip

\subsection{Proof of Lemma \ref{prop: continuity pps cost functional}}  \label{subsec: continuity cost function}
The statements (\ref{eq: pp cont1}) and (\ref{eq: pp cont2}) are a consequence of the following fact. 
For any fixed $M>0$ and $g \in \mathfrak{S}^M$ Proposition \ref{condition: potential} ensures for any $\rho>0$ some $\xi(\rho)>0$ such that
$\xi(\rho) \ra 0$ as $\rho \ra 0$ and $\Phi \in C([0,\xi(\rho)], \RR^d)$ 
solving  \eqref{eq: cond on potential: controllability cond C1}.
Since for the function $\ell(b)= b \ln b - b +1$, $b \gqq 0$ we have 
\begin{align*}
\int_0^T \int_{\RR^d \backslash \{0\}} \ell (g(s,z)) \nu(dz)ds \lqq M,
\end{align*}
the  monotone convergence theorem yields
\begin{align*}
\displaystyle \lim_{\rho \ra 0} \int_0^{\xi(\rho)} \int_{\RR^d \backslash \{0\}} \ell (g(s,z)) \nu(dz) ds 
= \int_0^{T} \int_{\RR^d \backslash \{0\}}  \displaystyle \lim_{\rho \ra 0} \textbf{1}_{[0, \xi(\rho)]}(s) \ell(g(s,z)) \nu(dz) ds = 0. 
\end{align*}
Hence for any $\delta>0$ there is $\rho_0>0$ small enough such that $\rho \in (0, \rho_0]$ implies 
$V(x,y,t) \lqq \eE_{\xi(\rho)} (g) \lqq \delta$.
\begin{flushright} $\square$ \end{flushright}
\bigskip
\subsection{Topological properties of the Skorokhod space used in Section \ref{sec: first exit} } \label{subsec: topology}
\begin{lem} \label{claim: first topological claim}
Given $t >0$, $D \subset \RR^d$ a bounded domain, $\rho>0$ and the sets
\begin{align*}
\mathcal{G}_t &:= \Big  \{ \Phi \in  \DD ([0,t], \mathbb{R}^d)~|~ \Phi(s) \in 
\overline{D \backslash B_\rho (0)} \quad \mbox{for all } s \in [0,t] \Big \}, \\
 \tilde \gG_t &:=  \Big \{  \Phi \in  \DD ([0,t], \mathbb{R}^d)~|~ \Phi(s) \in \overline{D \backslash B_\rho (0)} 
 \mbox{ for all  } s \in [0,t] \\ 
 &\qquad  \mbox{ except in a countable number of points} \Big  \},
\end{align*}
we have  that $\tilde \gG_t = \gG_t$ and $\tilde \gG_t$ is a closed set in $\DD([0,t], \RR^d)$ 
with respect to the Skorokhod topology. 
\end{lem}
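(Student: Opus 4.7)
The plan is to prove the lemma in two steps: first establish the set equality $\tilde \gG_t = \gG_t$ using right-continuity of c\`{a}dl\`{a}g paths, and then show that $\gG_t$ is closed in the $J_1$-topology by invoking pointwise convergence at continuity points of the limit. Throughout, set $K := \overline{D \setminus B_\rho(0)}$, which is a closed subset of $\RR^d$.

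For the equality, the inclusion $\gG_t \subseteq \tilde \gG_t$ is immediate. For the converse, let $\Phi \in \tilde \gG_t$ with countable exceptional set $E \subset [0,t]$ on which $\Phi$ may fail to belong to $K$. For every $s \in [0,t)$ the set $[0,t] \setminus E$ is dense in $(s,t)$, so one may pick $s_n \downarrow s$ with $s_n \notin E$; right-continuity of $\Phi$ gives $\Phi(s_n) \to \Phi(s)$, and since $\Phi(s_n) \in K$ and $K$ is closed, we conclude $\Phi(s) \in K$. At the right endpoint $s = t$ one uses the analogous approximation $s_n \uparrow t$ with $s_n \notin E$ to obtain $\Phi(t-) \in K$, and combines this with the c\`{a}dl\`{a}g convention used in the paper to identify $\Phi(t)$ with an appropriate limit through $E^c$, yielding $\Phi(t) \in K$ as well.

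For the closedness statement, let $(\Phi_n)_n \subset \tilde \gG_t = \gG_t$ converge to some $\Phi \in \DD([0,t],\RR^d)$ in the $J_1$-topology. A standard property of Skorokhod convergence is that $\Phi_n(s) \to \Phi(s)$ at every continuity point $s$ of the limit $\Phi$. Since the discontinuity set of any c\`{a}dl\`{a}g path is at most countable, this pointwise convergence holds off a countable subset of $[0,t]$. As $\Phi_n(s) \in K$ for all $s$ and all $n$, and $K$ is closed, we deduce $\Phi(s) \in K$ at every continuity point of $\Phi$; hence $\Phi \in \tilde \gG_t$, and by the first step $\Phi \in \gG_t$, which proves closedness.

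The only delicate point is the argument at the right endpoint $s=t$ in the equality step, since right-continuity at $t$ is not naturally available within $[0,t]$; this is handled by the c\`{a}dl\`{a}g convention. Everything else reduces to standard ingredients: the density of $E^c$ in $[0,t]$, pointwise $J_1$-convergence at continuity points of the limit, and the closedness of $K$.
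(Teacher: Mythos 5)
Your proposal follows essentially the same route as the paper's proof, with the two steps in reverse order: the equality $\tilde\gG_t=\gG_t$ via right-continuity through the complement of the countable exceptional set (the paper phrases this as a contradiction: if $\Phi(s)\notin K$ then $\Phi([s,s+\delta))\subset K^c$, an uncountable exceptional set), and closedness via the standard fact that $J_1$-convergence implies pointwise convergence at the continuity points of the limit, combined with closedness of $K=\overline{D\setminus B_\rho(0)}$; the paper proves closedness of $\tilde\gG_t$ directly, you prove closedness of $\gG_t$ by first passing through $\tilde\gG_t$ and then using the equality, which is the same content.

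The one point where you diverge is the terminal time $s=t$, and your fix does not actually exist: there is no c\`adl\`ag convention identifying $\Phi(t)$ with a limit through $E^c$, since an element of $\DD([0,t],\RR^d)$ may jump at the endpoint. Indeed, the path equal to a fixed point of $K$ on $[0,t)$ and equal to a point outside $K$ at $t$ lies in $\tilde\gG_t$ but not in $\gG_t$, so the equality as stated fails exactly at this edge case. You should be aware, however, that the paper's own argument has the identical blind spot (its right-continuity contradiction produces only the single point $\{t\}$ when $s=t$), so your proof is correct to precisely the same extent as the published one; the honest repair is either to restrict the exceptional set in the definition of $\tilde\gG_t$ to $[0,t)$ or to note that the discrepancy at the single time $t$ is irrelevant for the large-deviations upper bound in which the lemma is used.
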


\begin{proof}\hfill
\paragraph*{Step 1: } We prove that $\tilde \gG_t$ is closed in $\DD([0,t], \RR^d)$ with respect 
to the Skorokhod topology. Let $(\Phi_n)_{n \in \NN} \subset \tilde \gG_t$ such that $d_{J_1} (\Phi_n, \Phi) \ra 0$ as $n \ra \infty$ 
for some $\Phi \in \DD([0,t], \RR^d)$.
Let $(s_k)_{k \in \NN}$ be the countable  set of discontinuities of $\Phi$. For each $ n \in \NN$ 
we denote $(t^n_k)_{k \in \NN}$ the countable set such that
$$\Phi_n (s) \in \overline{D \backslash B_{\rho} (0)} \quad \mbox{ for all } s \in [0,t] \backslash  \{t^n_k\}_{k \in \NN}.$$
For all $s \in [0,t] \backslash \big ( \bigcup_{n=1}^{\infty} \{t^n_k\}_{k \in \NN} \cup \{s_k\}_{k \in \NN} \big )$ 
it is a standard property of c\`{a}dl\`{a}g functions (see \cite{Billingsley}, p.112) that $$\Phi_n (s) \ra \Phi(s) \quad \mbox{as } n \ra \infty.$$
Since $ \overline{D \backslash B_{\rho} (0)}$ is a compact set of $\RR^d$, $\Phi(s) \in  \overline{D \backslash B_{\rho} (0)}$, 
which concludes the proof that $\tilde \gG_t$ is closed in $(\DD([0,t], \RR^d), J_1)$. 

\paragraph{Step 2: }  We prove that $\tilde \gG_t = \gG_t$. 
The inclusion $\tilde \gG_t \supset \gG_t$ is obvious. 
Let $\Phi \in \tilde \gG_t$. If there exists $s \in [0,t]$ such that $\Phi(s) \notin \overline{ D \backslash B_\rho(0)}$, by right-continuity of $\Phi$, there exists $\delta>0$ such that
\begin{align*}
\Phi [s, s + \delta) \subset (\bar D)^c   \cup B_{\rho} (0)
\end{align*}
which violates $\Phi \in \tilde \gG_t$. 

\end{proof}

\begin{lem} \label{claim: second topological claim}
For any closed set $F\subset \bB(\RR^d)$ and $t>0$ 
we consider the following subset of $ \DD([0,t], \mathbb{R}^d)$
\begin{align*}
\mathcal{A} := \{  \varphi \in \DD ([0,t], \mathbb{R}^d) ~|~ \varphi(s) \in F \quad \text{for some } s \in [0,t]\}.
\end{align*} 
Then $\aA$ is closed in $\DD([0,t], \RR^d)$ with respect to the Skorokhod topology.
\end{lem}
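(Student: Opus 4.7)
The plan is to show that $\aA$ is sequentially closed in the $J_1$-topology. Let $(\varphi_n)_{n \in \NN} \subset \aA$ satisfy $\varphi_n \to \varphi$ in $J_1$, and for each $n$ pick $s_n \in [0,t]$ with $\varphi_n(s_n) \in F$. By compactness of $[0,t]$, extract a subsequence so that $s_n \to s^* \in [0,t]$. The target is to deduce that $\varphi(s^*) \in F$ (or, failing that, $\varphi(s) \in F$ for some other $s$), whence $\varphi \in \aA$.

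First I would invoke the definition of the $J_1$-metric to obtain strictly increasing homeomorphisms $\lambda_n : [0,t] \to [0,t]$ with $\lambda_n(0)=0$, $\lambda_n(t)=t$ and
\[
\|\lambda_n - \id\|_\infty + \|\varphi_n \circ \lambda_n - \varphi\|_\infty \lra 0.
\]
Setting $u_n := \lambda_n^{-1}(s_n)$, the bound $|u_n - s_n| = |\lambda_n(u_n) - u_n| \lqq \|\lambda_n - \id\|_\infty$ forces $u_n \to s^*$, while $|\varphi_n(s_n) - \varphi(u_n)| \lqq \|\varphi_n \circ \lambda_n - \varphi\|_\infty \lra 0$. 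Then I would pass to a further subsequence along which $u_n$ approaches $s^*$ monotonically.

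If $u_n \gqq s^*$ for all large $n$, right-continuity of $\varphi$ at $s^*$ gives $\varphi(u_n) \to \varphi(s^*)$, hence $\varphi_n(s_n) \to \varphi(s^*)$, and closedness of $F$ together with $\varphi_n(s_n) \in F$ yields $\varphi(s^*) \in F$, so $\varphi \in \aA$. If instead $u_n \uparrow s^*$ strictly, the same chain of estimates yields only $\varphi(s^{*-}) \in F$.

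The main obstacle is this second case: a left-limit lying in $F$ does not on its own force the path itself to hit $F$. I plan to handle it by exploiting the context in which the lemma is used: Lemma \ref{chpt2: lemma third lemma lower bound first exit time} applies $\aA$ together with the LDP upper bound, so that only paths with finite rate function $\JJ_{x,T}(\varphi) < \infty$ are relevant, and such paths are forced to lie in $C([0,t], \RR^d)$ by the very structure of the controlled equation \eqref{eq: controlled ODE}. For such continuous $\varphi$ one has $\varphi(s^{*-}) = \varphi(s^*)$, so the second case collapses into the first. A more intrinsic remedy, if one insists on closedness in all of $\DD([0,t], \RR^d)$, is to use that the discontinuity set of $\varphi$ is at most countable together with the right-continuity of each $\varphi_n$: one replaces $s_n$ by slightly larger parameters $\tilde s_n \in (s_n, s_n + 1/n)$ at which $\varphi$ is continuous at the corresponding point $\tilde u_n := \lambda_n^{-1}(\tilde s_n)$, keeping $\varphi_n(\tilde s_n)$ within distance $1/n$ of $\varphi_n(s_n) \in F$, and then reruns the argument along $(\tilde s_n)$ falling into the first case.
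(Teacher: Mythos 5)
You have correctly isolated the only delicate point: the case in which the time-changed parameters $u_n$ approach $s^*$ strictly from the left, where one only obtains $\varphi(s^{*-})\in F$ and not $\varphi(s^*)\in F$. However, your ``intrinsic remedy'' does not close this gap: after replacing $s_n$ by nearby parameters $\tilde s_n$ you again produce points $\tilde u_n\to\tilde s^*$ with $\dist(\varphi(\tilde u_n),F)\to 0$, and nothing prevents $\tilde u_n\uparrow\tilde s^*$ strictly with $\varphi$ discontinuous at $\tilde s^*$; continuity of $\varphi$ at the auxiliary points $\tilde u_n$ is irrelevant, since what is needed is control at the limit point $\tilde s^*$. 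In fact no argument can succeed, because the lemma is false as stated: take $d=1$, $t=1$, $F=\{1\}$, let $\varphi(s)=2s$ for $s<\tfrac12$ and $\varphi(s)=3$ for $s\gqq\tfrac12$, and let $\varphi_n(s)=\min(2s,1)$ for $s<\tfrac12+\tfrac1n$, $\varphi_n(s)=3$ for $s\gqq\tfrac12+\tfrac1n$. Then $\varphi_n(\tfrac12)=1\in F$, so $\varphi_n\in\aA$, and with the piecewise linear time change $\lambda_n$ fixing $0,1$ and sending $\tfrac12\mapsto\tfrac12+\tfrac1n$ one checks $\sup_s|\lambda_n(s)-s|\lqq\tfrac1n$ and $\sup_s|\varphi_n(\lambda_n(s))-\varphi(s)|\lqq\tfrac2n$, so $d_{J_1}(\varphi_n,\varphi)\to 0$; yet $\varphi$ has range $[0,1)\cup\{3\}$ and never meets $F$. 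Your two-case analysis actually proves the correct statement: the larger set $\aA':=\{\varphi\in\DD([0,t],\RR^d) \mid \varphi(s)\in F \mbox{ or } \varphi(s-)\in F \mbox{ for some } s\in[0,t]\}$ is $J_1$-closed.

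For comparison, the paper's own proof breaks at exactly the same place: it asserts that right continuity of $\varphi_n$ yields $\varphi_n([s_n,s_n+\delta_n))\subset F$, which is false for a closed, non-open $F$ (already for $F=\{1\}$ and $\varphi_n(s)=1+(s-s_n)$ to the right of $s_n$), and the subsequent pointwise limit is taken at points of the $n$-dependent intervals $I_n$, from which $\varphi(r)\in F$ does not follow. So your first, ``contextual'' remedy is the right repair, and it is how the application in Lemma \ref{chpt2: lemma third lemma lower bound first exit time} should be fixed: apply the LDP upper bound to the closed set $\aA'\supset\aA$, and observe that any $\psi\in\aA'$ with $\JJ_{y,T}(\psi)<\infty$ is of the form $U^g$ solving \eqref{eq: controlled ODE}, hence continuous, hence genuinely hits $F$ at some time $s\lqq T$; since the entropy integrand is nonnegative, $\eE_T(g)\gqq\eE_s(g)$, and the bound $\inf_{\psi\in\aA'}\JJ_{y,T}(\psi)\gqq\inf_{z\in F}V(y,z)$ needed there survives. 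As a proof of Lemma \ref{claim: second topological claim} itself, however, your proposal (like the paper's) has a genuine gap, and the statement must be weakened to the closedness of $\aA'$ rather than proved as it stands.
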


\begin{proof}
Let $(\varphi_n)_{n \in \NN}$ be a sequence of elements of $\aA$ and $\varphi \in \DD([0, t], \RR^d)$ 
such that $d_{J_1}(\varphi_n, \varphi) \ra 0$ as $n \ra \infty$. 
For every $n \in \NN$ let $s_n \in [0, t]$ such that $\varphi_n(s_n) \in F$. By right continuity 
of $\varphi_n$ there exists $\delta_n>0$ such that $\varphi_n([s_n, s_n + \delta_n)) \subset F$. 
For every $n \in \NN$ we denote $I_n:= [s_n, s_n + \delta_n)$. For every $n \in \NN$ let $\{t_n^k\}_{k \in \NN}$ 
be the set of discontinuities of $\varphi$ in $I_n$. Since for every $n \in \NN$ 
$\varphi_n$ and $\varphi$ are c\`{a}dl\`{a}g functions we have
\begin{align*}
\varphi_n(r) \ra \varphi(r) \quad \text{ for all } r \in \bigcup_{n \in \NN} (I_n \backslash \{t_n^k\}_{k \in \NN}).
\end{align*}
Since $F$ is a closed subset of $\RR^d$ $\varphi(r) \in F$ for all $ r \in \bigcup_{n \in \NN} (I_n \backslash \{t_n^k\}_{k \in \NN})$. 
This proves that $\varphi \in \aA$ and that $\aA$ is closed in $\DD([0,T], \RR^d)$ with respect to the Skorokhod topology. 
\end{proof}

\paragraph*{Acknowledgments} The first author thanks Peter Imkeller (HU Berlin) and Sylvie Roelly (U. Potsdam) for the fruitful discussions on the subject. He also acknowledges the financial support from the project MASH(51099907) during his stay at U. Potsdam and from the FAPESP grant number 2018/06531-1 at UNICAMP-Campinas (SP).
The second author would like to thank the School of Sciences at Universidad de los Andes for FAPA funding and MINCIENCIAS for travel funding in the framework of the Stic AMSUD 2019 Project "Stochastic analysis of non-Markovian phenomena". 
Both authors greatly acknowledge financial and infrastructure support by the DFG-funded International Research Training Group (IRTG) 1740 Berlin- S\~ao Paulo: 
Dynamical Phenomena in Complex Networks: Fundamentals and Applications hosted at Humboldt-Universit\"at Berlin. 

\bigskip

\end{document}